\documentclass[11pt,letterpaper]{amsart}

\usepackage[T1]{fontenc}
\usepackage{lmodern}
\usepackage{etex}
\usepackage[shortlabels]{enumitem}
\usepackage{amsmath}
\usepackage{amsxtra}
\usepackage{amscd}
\usepackage{amsthm}
\usepackage{amsfonts}
\usepackage{amssymb}
\usepackage{booktabs, tabularx}
\usepackage{eucal}
\usepackage[all]{xy}
\usepackage{graphicx}
\usepackage{tikz-cd}
\usepackage{mathrsfs}
\usepackage{subfiles}
\usepackage{mathpazo}
\usepackage[colorinlistoftodos, textsize=tiny]{todonotes}
\usepackage{morefloats}
\usepackage{pdfpages}
\usepackage{thm-restate}
\usepackage[utf8]{inputenc}
\usepackage{epigraph}
\usepackage{csquotes}
\usepackage[margin=1in]{geometry}
\usepackage{adjustbox}
\usepackage{scalerel}
\usepackage{stackengine}
\stackMath
\newcommand\reallywidehat[1]{%
\savestack{\tmpbox}{\stretchto{%
  \scaleto{%
    \scalerel*[\widthof{\ensuremath{#1}}]{\kern-.6pt\bigwedge\kern-.6pt}%
    {\rule[-\textheight/2]{1ex}{\textheight}}%WIDTH-LIMITED BIG WEDGE
  }{\textheight}% 
}{0.5ex}}%
\stackon[1pt]{#1}{\tmpbox}%
}
\graphicspath{ {images/} }

\RequirePackage{color}
\definecolor{myred}{rgb}{0.75,0,0}
\definecolor{mygreen}{rgb}{0,0.5,0}
\definecolor{myblue}{rgb}{0,0,0.65}

\usepackage{color}

\usepackage[
	hypertexnames=false,
    pdftex, 
 	pdfpagemode=UseNone,
 	breaklinks=true,
 	extension=pdf,
 	colorlinks=true,
 	linkcolor=blue,
 	citecolor=red,
 	urlcolor=blue,
 ]{hyperref} 
 \usepackage{cleveref}

\usepackage{tikz}
\usetikzlibrary{matrix,arrows,decorations.pathmorphing}

\theoremstyle{plain}
\newtheorem{theorem}{Theorem}[section]
\newtheorem*{maintheorem}{Main theorem}

\newtheorem{proposition}[theorem]{Proposition}
\newtheorem{lemma}[theorem]{Lemma}
\newtheorem{corollary}[theorem]{Corollary}

\theoremstyle{definition}
\newtheorem{definition}[theorem]{Definition}
\newtheorem{remark}[theorem]{Remark}

\newtheorem{example}[theorem]{Example}

\newtheorem*{questionintro}{Question}

\theoremstyle{remark}

\numberwithin{equation}{section}
\newcommand\nc{\newcommand}
\nc\on{\operatorname}
\nc\renc{\renewcommand}

\newcommand\dR{\on{dR}}

\newcommand\bc{\mathbb C}

\DeclareMathOperator\Pic{Pic}

\newcommand*{\shom}{\mathscr{H}\kern -.5pt om}
\newcommand*{\stor}{\mathscr{T}\kern -.5pt or}
\newcommand*{\sext}{\mathscr{E}\kern -.5pt xt}

\makeatletter
\providecommand\@dotsep{5}
\renewcommand{\listoftodos}[1][\@todonotes@todolistname]{%
\@starttoc{tdo}{#1}}
\makeatother

\makeatletter
\newcommand{\customlabel}[2]{\protected@write \@auxout {}{\string \newlabel {#1}{{#2}{\thepage}{#2}{#1}{}} }\hypertarget{#1}{#2}}

\DeclareMathOperator\id{id}

\DeclareMathOperator\rk{rk}

\DeclareMathOperator\im{im}
\DeclareMathOperator\End{End}

\DeclareMathOperator\pr{pr}

\DeclareMathOperator\inv{inv}

\DeclareMathOperator\Gr{Gr}

\DeclareMathOperator{\Hom}{Hom}
\DeclareMathOperator{\Sym}{Sym}
\DeclareMathOperator{\cHom}{\mathcal{H}\textit{om}}

\newcommand{\bbC}{\mathbb{C}}

\newcommand{\bbL}{\mathbb{L}}

\newcommand{\bbN}{\mathbb{N}}

\newcommand{\bbP}{\mathbb{P}}
\newcommand{\bbQ}{\mathbb{Q}}
\newcommand{\bbR}{\mathbb{R}}

\newcommand{\bbV}{\mathbb{V}} 
\newcommand{\bbW}{\mathbb{W}}

\newcommand{\bbZ}{\mathbb{Z}}

\renewcommand{\mathcal}{\mathscr}

\newcommand{\cE}{\mathcal{E}}
\newcommand{\cF}{\mathcal{F}}

\newcommand{\cH}{\mathcal{H}}

\newcommand{\cL}{\mathcal{L}}
\newcommand{\cM}{\mathcal{M}}
\newcommand{\cN}{\mathcal{N}}
\newcommand{\cO}{\mathcal{O}}
\newcommand{\cP}{\mathcal{P}}

\newcommand{\cV}{\mathcal{V}}
\newcommand{\cW}{\mathcal{W}}
\newcommand{\cX}{\mathcal{X}}
\newcommand{\cY}{\mathcal{Y}}

\newcommand{\rH}{\textup{H}}

\newcommand{\rR}{\textup{R}}

\DeclareMathOperator{\Alb}{Alb}

\DeclareMathOperator{\Gal}{Gal}
\DeclareMathOperator{\GL}{GL}
\DeclareMathOperator{\SL}{SL}

\DeclareMathOperator{\Sp}{Sp}

\DeclareMathOperator{\SO}{SO}

\DeclareMathOperator{\gr}{gr}
\newcommand{\iso}{\simeq}
\newcommand{\into}{\hookrightarrow}
\DeclareMathOperator{\Lie}{Lie}

\newcommand{\rd}{\textup{d}}
\newcommand{\too}{\longrightarrow}

\newcommand{\Perv}{\textup{Perv}}

\DeclareMathOperator{\Vect}{Vect}

\DeclareFontFamily{U}{wncy}{}
\DeclareFontShape{U}{wncy}{m}{n}{<->wncyr10}{}
\DeclareSymbolFont{mcy}{U}{wncy}{m}{n}
\DeclareMathSymbol{\Sha}{\mathord}{mcy}{"58}

\def\listtodoname{List of Todos}
\def\listoftodos{\@starttoc{tdo}\listtodoname}

\title{$E_6$-local systems from cubic threefolds}

\author{Thomas Kr\"amer, Daniel Litt, Marco Maculan}

\usepackage{microtype}

\begin{document}

\begin{abstract} 
We produce infinitely many local systems on (level covers of) the moduli space of smooth cubic threefolds, with algebraic monodromy group equal to the exceptional group $E_6$. These local systems arise in the middle cohomology of abelian \'etale covers of the Fano scheme parametrizing lines in the universal cubic threefold.
\end{abstract}

\maketitle

\setcounter{tocdepth}{1}
\tableofcontents

\thispagestyle{empty}

\section{Introduction}\label{sec:intro}

The existence question for motives over number fields with exceptional Galois group was first raised for the groups $G_2$ and $E_8$ by Serre \cite[§8.8]{serre1994proprietes}, who qualified it as \emph{hasardeuse}. 
His interest probably came from the fact that constructions based on curves and abelian varieties typically yield only classical groups. It is now known that all simple exceptional groups do occur as Galois groups of motives. In this paper, we consider the following version of the problem:

\begin{questionintro}
Is every simple exceptional group realized as the algebraic monodromy group of a local system of geometric origin on a smooth complex variety?
\end{questionintro}

This is not merely an analogue of Serre's question: by a specialization argument, a positive answer also resolves the original question on motives if one interprets the motivic Galois group either as an $\ell$-adic monodromy group or as a Mumford--Tate group.
As we will recall in \cref{sec:previous-work}, known constructions yield the sought-after local systems for all simple exceptional groups other than $E_6$. The main goal of this paper is to fill in this missing case: more precisely, we construct infinitely many pairwise distinct local systems of geometric origin whose algebraic monodromy group is the exceptional group \(E_6\). By specializing these, we obtain many motives with Galois group \(E_6\). The local systems that we construct arise in the middle cohomology of abelian \'etale covers of the family of Fano surfaces associated with a family of smooth cubic threefolds, as we now explain.

\subsection{Main result} \label{sec:main-results}
Let $S$ be a smooth complex variety and $\cY \to S$ a family of smooth cubic threefolds, i.e.~a subvariety
\[ \cY \subset \bbP(\cE) \]
cut out by a nonzero global section of $\Sym^3 \cE^\vee$ for a vector bundle $\cE$ of rank five on $S$ such that the projection $\cY \to S$ is a smooth morphism. Let~$\cY_s$ be the fiber of this morphism at a point $s\in S$. The family is called \emph{versal} if for general $s \in S$ the Kodaira--Spencer map 
\[ T_s S \to \rH^1(\cY_s, T_{\cY_s}) \]
is surjective, or equivalently if the classifying map $S \to \cM$ to the moduli stack~$\cM$ of cubic threefolds is dominant. Passing from smooth cubic threefolds to their associated Fano surfaces, we get a smooth projective morphism 
\[ \pi \colon \cF \to S \]
whose fiber $\cF_s$ at a point $s\in S$ is the surface parametrizing lines on the cubic threefold $\cY_s$. 

We are interested in the middle cohomology of this family of surfaces twisted by local systems of rank one. Let $\bbL$ be a unitary complex local system of rank one on $\cF$. By Ehresmann's theorem, the higher direct image
\[ \bbV := \rR^2 \pi_\ast \bbL \]
is a local system on $S$. For $s \in S$, the topological fundamental group $\pi_1(S, s)$ acts on $\bbV_s = \rH^2(F, \bbL)$ via the monodromy representation
$\rho_s \colon \pi_1(S, s) \to \GL(\bbV_s)$.
By the~\emph{algebraic monodromy group} of $\bbV$ we mean the Zariski closure
\[ M_s \; := \; \overline{\im \rho_s} \; \subset \; \GL(\bbV_s) \]
of the image of $\rho_s$. Since $\bbL$ is unitary, the connected component of the identity~$M_s^\circ$ is a reductive Lie group. We will consider the case where $\bbL$ is of \emph{ finite order} in the sense that for some $n>0$ the local system~$\bbL^{\otimes n}$ is trivial. The smallest such $n$ is called the \emph{order} of $\bbL$. To compare the monodromy representations arising as above for different choices of~$\bbL$, recall that the \emph{trace field} $\bbQ(\rho_s)\subset \bbC$ is generated over $\bbQ$ by the traces of $\rho_s(\gamma)$ for all~$\gamma \in \pi_1(S, s)$. The trace field may change if one replaces $S$ by a finite \'etale cover. To take this into account, define the~\emph{invariant trace field} as the intersection
\[ \inv(\rho_s) \; := \; \bigcap_{\Gamma} \bbQ(\rho_s\rvert_{\Gamma}) \]
over all finite index subgroups $\Gamma \subset \pi_1(S, s)$. We can then state our main result as follows.

\begin{maintheorem}\label{thm:main-thm} 
There exists an integer $n_0 > 2$ such that for any versal family $\cY \to S$ of smooth cubic threefolds, the following holds:
\begin{enumerate}
\item For any local system $\bbL$ of finite order whose restriction to a general fiber $F$ of $\pi\colon \cF\to S$ has order~$n \ge n_0$, we have
\[ (M^\circ_s, \bbV_s) \iso (E_6, V) \quad \text{and} \quad \inv(\rho_s) = \bbQ(\zeta_n) \]
where $V$ is one of the two $27$-dimensional irreducible representations of $E_6$ and $\zeta_n = e^{2 \pi i /n}$.\smallskip 
\item For $i=1,2$ let $\bbL_i$ be a local system of rank one with the above properties and put $\bbV_i := \rR^2 \pi_\ast \bbL_i$. If the two local systems $\bbV_1$ and $\bbV_2$ are isomorphic, then so are $\bbL_1 \rvert_F$ and $\bbL_2 \rvert_F$. 
\end{enumerate}
\end{maintheorem}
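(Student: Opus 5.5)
The plan is to identify $\bbV_s=\rH^2(F,\bbL)$ with the fiber of a perverse sheaf on the intermediate Jacobian $A=\Alb(F)$, and then use Tannakian techniques for convolution of perverse sheaves on abelian varieties, in the style of Krämer--Weissauer. Recall that the Abel--Jacobi map embeds $F$ into the $5$-dimensional principally polarized abelian variety $A=JY$. Rank one local systems of finite order on $F$ come from characters $\chi\in\Pi(A)$ of $\pi_1(A)$, since $\pi_1(F)^{ab}=\pi_1(A)$. Hence $\rH^2(F,\bbL)=\rH^*(A,\delta_F\otimes L_\chi)$, where $\delta_F$ is the perverse intersection complex of the surface $F\subset A$. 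Its Euler characteristic is $c_2(F)=27$, so $\dim \bbV_s=27$ for generic $\chi$ by generic vanishing.

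\textbf{Step 1 (Tannakian group of $\delta_F$).} For the convolution group $G(\delta_F)$ I will use the known result, due to Krämer, that for the Fano surface of a smooth cubic threefold $G(\delta_F)^\circ=E_6$, acting on $\omega(\delta_F)$ by the $27$-dimensional representation. This is detected by the difference morphism $F\times F\to \Theta$, so that $\delta_F*\delta_F$ contains the theta divisor piece. I assume this is established earlier in the paper or cited.

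\textbf{Step 2 (comparing monodromy with the Tannakian group).} Over the versal family, the fiber functors $\chi\mapsto \rH^*(A_s,-\otimes L_\chi)$ assemble into a family. The monodromy of $\bbV$ for a torsion $\chi$ of order $n$ is contained in the Tannakian group of the generic fiber, so $M_s^\circ\subseteq E_6$. For the reverse inclusion, I will use a "big monodromy" argument. The monodromy of the family $\cF\to S$ acts on the torsion characters through $\Sp_{10}(\bbZ)$, since versality makes the monodromy on $\rH^1$ Zariski dense in $\Sp_{10}$, containing a congruence subgroup. I then apply a Frobenius-type or Deligne-type argument: for $n$ large, restricting to the finite-index stabilizer of $\chi$, the Zariski closure of monodromy equals the Tannakian group at generic characters. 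Concretely, this follows the Lawrence--Sawin / Javanpeykh–Krämer–Lehn–Maculan strategy. The set of characters $\chi$ for which the stalk group is smaller than $G(\delta_F)$ is a finite union of torsion translates of subtori of bounded complexity. Torsion points of order $n\ge n_0$ avoid these, using the $\Sp_{10}$-orbits of torsion points together with the rigidity of small subtori. Within $E_6$, I rule out proper reductive subgroups acting irreducibly on the $27$, namely $F_4$, $\mathrm{PSL}_3$-type subgroups, and so on. For this I use that $M_s^\circ$ contains the local monodromy coming from degenerations to nodal cubics, where Picard–Lefschetz gives a pseudo-reflection or unipotent element of specific type. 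I also use that $\bbV$ is not self-dual, since $\bbV^\vee$ corresponds to $\chi^{-1}$. This excludes $F_4$ and the other self-dual cases, leaving $E_6$.

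\textbf{Step 3 (trace field and distinctness).} The trace field of $\rho_s$ lies in $\bbQ(\zeta_n)$, because $\bbL$ is defined over $\bbQ(\zeta_n)$. For the equality with the invariant trace field, I use that Galois conjugation $\sigma_a:\zeta_n\mapsto\zeta_n^a$ sends $\bbV(\chi)$ to $\bbV(\chi^a)$. If $\inv(\rho_s)$ were a proper subfield, then on a finite-index subgroup $\bbV(\chi)$ and $\bbV(\chi^a)$ would become projectively isomorphic for some $a\ne 1$. Part (2) therefore implies the trace field claim. For part (2), an isomorphism $\bbV(\chi_1)\iso\bbV(\chi_2)$ gives, via the "generic vanishing/Fourier transform" description of $\bbV(\chi)$ as the stalk at $\chi$ of a sheaf on the family of character tori, an identification of those stalks compatible with monodromy. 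I then use that the monodromy group of the family of pairs, i.e. the fiber product, is $E_6\times E_6$ unless $\chi_1=\chi_2$, by Goursat's lemma and Step 2 applied to $\delta_F\boxtimes\delta_F$ on $A\times A$ with character $(\chi_1,\chi_2^{-1})$.

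The main obstacle is Step 2. One must uniformly, in $n$, exclude exceptional torsion characters where monodromy drops. The bound $n_0$ must come from an effective description of the exceptional locus as finitely many torsion-translated subtori, so I would try first to bound these via the degenerations to nodal cubic threefolds.
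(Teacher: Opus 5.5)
\textbf{There is a genuine gap: the inclusion $E_6\subseteq M_s^\circ$.} Your Step~1 and the inclusion $M_s^\circ\subseteq E_6$ agree with the paper. There one uses that monodromy normalizes the convolution group $G_F$, whose derived group is $E_6$ acting on the $27$, and then applies Schur's lemma and the semisimplicity of $M_s^\circ$. The gap is the reverse inclusion, and with it everything in Step~3 that depends on it.

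Your claim that ``outside finitely many torsion translates of subtori the monodromy equals the Tannaka group'' has the shape of the Lawrence--Sawin / JKLM comparison theorems. Those theorems require the abelian variety to be constant. Here $\Alb(\cF/S)$ genuinely varies, and the character $\chi$ is itself moved by the $\Sp_{10}(\bbZ)$-monodromy; the paper points out explicitly that these results do not apply. No available theorem turns knowledge of $G_F$ into a lower bound for $M_s^\circ$ in this setting, and you do not supply one.

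The Picard--Lefschetz fallback does not close the gap. When $Y$ acquires a node, $F$ becomes singular along the whole curve of lines through the node, so there is no Lefschetz vanishing-cycle formula on $\rH^2(F,\bbL)$. Moreover, $E_6$ contains no pseudo-reflections on the $27$: determinant one rules out semisimple ones, and $u-1$ has rank $\ge 6$ for every unipotent $u\neq 1$ (a root element already has rank $6$). The ``unipotent element of specific type'' is never specified.

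Non-self-duality is also unjustified. The identification $\bbV^\vee\simeq\rR^2\pi_\ast\bbL^\vee$ only helps once you know that $\chi\mapsto\bbV(\chi)$ is injective, which is part~(2). Your Goursat argument for part~(2) has to exclude exactly the case where the monodromy of $\bbV_1\oplus\bbV_2$ is the graph of an isomorphism. So it assumes what it sets out to prove.

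\textbf{The missing idea is Hodge-theoretic.}
\begin{enumerate}
\item The second iterate of the Higgs field of $\bbV$ at $s$, a map $\Sym^2T_sS\to\Hom(\rH^0(F,\cL\otimes\omega_F),\rH^2(F,\cL))$, factors through the cup product
\[
\mu\colon\Sym^2\rH^1(F,T_F)\to\rH^2(F,\textstyle\bigwedge^2T_F)\simeq\rH^0(F,\omega_F^{\otimes 2})^\vee .
\]
\item For generic $Y$, $\mu$ has rank $50=51-1$; this is checked by a Jacobian-ring computation on the Fermat cubic. Its kernel $\ker\mu^\vee$ is spanned by the bicanonical section cutting out the divisor $B$ of lines of second type, because $\rH^2(Y,\bigwedge^2T_Y)=0$.
\item If $\cL\otimes\omega_F$ and $\cL^\vee\otimes\omega_F$ are globally generated, this recovers $\cL|_B$ from the local system $\bbV$ alone, and hence $\cL$, since $B$ is ample.
\item Global generation holds for all torsion $\cL$ of order $\ge n_0$ on a very general $F$. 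The proof is a first-order argument at $0\in\Pic^0(F)$ combined with Raynaud's theorem and simplicity of the intermediate Jacobian; this is where $n_0$ comes from.
\item This gives part~(2) and non-self-duality directly. It also shows that $\bbV^{2,0}\oplus\bbV^{0,2}$ lies in a single simple summand $\bbW$ of rank $\ge 6+6+1=13$. The complement of $\bbW$ has finite monodromy: by Galois conjugation it underlies an integral local system of pure type $(1,1)$, hence unitary.
\item With these two facts, every maximal semisimple subgroup of $E_6$ is excluded by inspecting the branching of the $27$.
\end{enumerate}
Once part~(2) is available, your trace-field argument via Galois conjugation is essentially the paper's.
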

 
Note that the Fano surface $F=\cF_s$ of lines on a smooth cubic threefold has 
$\dim \rH_1(F, \bbQ) = 10$, so for any $n \ge 1$ there are many rank one local systems of order $n$ on it. After replacing~$S$ by a finite level cover, they all extend to local systems $\bbL$ on the total space of our family.

The local systems $\bbV$ that we produce are distinct in the following strong sense. Let us say that a local system $\bbW_1$ on a smooth variety $T_1$ is \emph{equivalent} to a local system $\bbW_2$ on a smooth variety $T_2$ if there is a smooth variety $T$ with dominant morphisms $p_i \colon T\to T_i$ such that $p_1^\ast \bbW_1 \iso p_2^\ast \bbW_2$. If this is the case, then the local systems $\bbW_1$ and $\bbW_2$ have the same invariant trace field. Hence the local systems $\bbV$ arising from two distinct values  $n_1, n_2$ of $n$ in part (1) of the theorem are non-equivalent as long as $\mathbb{Q}(\zeta_{n_1})\neq \mathbb{Q}(\zeta_{n_2})$, which holds for any pair of distinct integers $n_1,n_2$ unless one of them is an odd number and the other is twice that number.

Note that in the formulation of the main theorem we must have $n_0>2$. Indeed, if the restriction of $\bbL$ to a general fiber of $\pi$ has order $n = 2$, then the local system $\bbV$ of rank~$27$ on~$S$ is self-dual up to twist by a rank one local system. In this case $\bbV$ cannot have connected monodromy group $E_6$ since the $27$-dimensional irreducible representations of $E_6$ are not self-dual. We haven't computed the connected monodromy group for $n=2$ but expect that it is the symplectic group $\Sp_8$ and acts on~$\bbV_s$ via the nontrivial summand in the exterior square of the natural $8$-dimensional representation.

\subsection{Previous work} \label{sec:previous-work}

As we already mentioned at the beginning, all simple exceptional groups are known to occur as Galois groups of motives (operationalized, for example, as $\ell$-adic monodromy groups or Mumford-Tate groups):

\begin{itemize}
\item[$G_2\colon$]
Dettweiler and Reiter \cite{dettweiler2010rigid} constructed a rigid rank $7$ local system on $\mathbb{P}^1 \smallsetminus \{0,1,\infty\}$ with monodromy group $G_2$ via Katz’s middle convolution operation. It is of geometric origin, and its specializations yield motives of type $G_2$.
Earlier, Gross and Savin \cite{gross1998motives} proposed a conjectural construction of such motives in the cohomology of a Shimura variety. Yun \cite{yun2014motives} later constructed exceptional motivic local systems whose $G_2$ case is conjecturally equivalent to that of Dettweiler--Reiter.
\smallskip

\item[$E_6\colon$]
Patrikis \cite{patrikis2016deformations, patrikis2017deformations} constructed 
$p$-adic Galois representations with Zariski dense image in~$E_6$
which are geometric in the sense of Fontaine--Mazur, i.e.~unramified almost everywhere and de Rham at primes above $p$. Such Galois representations are conjectured to arise in the $p$-adic \'etale cohomology of an algebraic variety. Boxer, Calegari, Emerton, Levin, Madapusi Pera, and Patrikis \cite{boxer2019compatible} subsequently realized $E_6$ as the Galois group of a motives occurring in the cohomology of a Shimura variety, though not a family of such.
\smallskip

\item[$E_7, E_8\colon$]
Yun \cite{yun2014motives}, using ideas from the Langlands program, constructed motivic local systems on $\mathbb{P}^1 \smallsetminus \{0,1,\infty \}$ with monodromy groups $E_7$ and $E_8$, whose specializations yield motives of these types.
\smallskip

\item[$F_4\colon$]
Patrikis \cite{patrikis2016deformations, patrikis2017deformations} also found geometric $p$-adic Galois representations with Zariski dense image in $F_4$. Independently, Guralnick, L\"{u}beck, and Yu \cite{guralnick2016rational} constructed rigid $F_4$-local systems on $\mathbb{P}^1 \smallsetminus \{0,1,\infty \}$. F\ae rgeman \cite{faergeman2024motivic} recently showed that all rigid $G$-local systems are motivic.
\end{itemize}

For the groups $G=G_2, E_7, E_8, F_4$, the first, third, and fourth constructions above also provide an answer to the geometric analogue of Serre's question that we study here: that is, they provide local systems of geometric origin with algebraic monodromy group $G$. To the best of our knowledge, this geometric analogue has remained open for $E_6$ prior to this work. Note that there are natural  candidates for motivic local systems with monodromy $E_6$ or $E_7$, namely the tautological local systems on Shimura varieties of type $E_6$ and $E_7$, respectively. Although these are not known to be of geometric origin, Diao, Lan, Liu, and Zhu \cite{diao2023logarithmic} show that they are geometric in the sense of Fontaine--Mazur, and are therefore conjectured to arise from geometry. In particular, by suitable specialization, one can construct Galois representations of the desired type that are conjecturally motivic. In contrast, the local systems constructed in this paper are not of Shimura type.

One of the goals of this paper is to explain how the remarkable geometry of cubic threefolds reflects the exceptional nature of the group $E_6$. It would be interesting to find similarly explicit geometric constructions for the other exceptional groups.

\subsection{Ideas of the proof}

Before going into details, let us briefly explain the main ideas in the proof that the local system~$\bbV$ of the main theorem has connected monodromy group $M_s^\circ = E_6$. We proceed in two steps: first we observe that there is an inclusion $M_s^\circ \subset E_6$, and then we show that this inclusion cannot be strict.

The inclusion $M_s^\circ \subset E_6$ follows directly from results in the literature. Indeed, Lawrence and Sawin \cite[lemma~2.9]{lawrence2025shafarevich} prove that the monodromy normalizes the Tannaka group $G$ associated with the convolution of perverse sheaves on the abelian variety $\Alb(\cF_{\bar{\eta}})$, where $\cF_{\bar{\eta}}$ is a geometric generic fiber of $\pi \colon \cF \to S$, see also \cite[th.~4.5]{JKLM}. On the other hand, by~\cite[th.~2]{kramer2016cubic} the derived group of the connected component of the identity in $G$ is $E_6$ acting via one of its two irreducible $27$-dimensional representations (as explained in~\cite{kramercharacteristicI}, this is related to the fact that the monodromy of the family of lines in the universal cubic \emph{surface} is the Weyl group $W(E_6)$). Schur's lemma then gives the desired inclusion $M_s^\circ \subset E_6$.

The main task of this paper is to prove that this inclusion cannot be strict. Note that the comparison results between algebraic monodromy groups and Tannaka groups in \cite[th.~4.7]{lawrence2025shafarevich} and~\cite[th.~4.10]{JKLM} do not apply here because the abelian scheme $\on{Alb}(\mathscr{F}/S)$ is not constant. In contrast, our proof is Hodge-theoretic in nature and builds on a higher-dimensional generalization of the theory developed in \cite{landesman2025big} to prove big monodromy results for curves.

Since $M_s^\circ$ is the connected monodromy group of a local system of geometric origin, it is necessarily semisimple and it therefore suffices to show that $M_s^\circ$ cannot be contained in any of the maximal semisimple subgroups of $E_6$. Our proof of this relies on an analysis of the derivative of the period map associated to $\bbV$ at a general point of $S$, as we now explain.

Recall that the surface $F:=\cF_s$ parametrizes lines on the cubic threefold $Y:=\cY_s$. A line on~$Y$ is called of \emph{second type} if its normal bundle splits as $\cO(1) \oplus \cO(-1)$. The locus of lines of second type is an effective bicanonical divisor $B\subset F$. 
We first prove a reconstruction result for the restriction to~$B$ of the line bundle $\cL = \bbL \otimes \cO_\cF$ attached to our local system: if $\cL \otimes \omega_F$ and $\cL^\vee \otimes \omega_F$ are globally generated (which is true for general $s$ and local systems of high enough order), then we may functorially reconstruct~$\cL|_B$ from the second derivative of the period map associated to the complex variation of Hodge structures associated to $\bbV$. Since $B \subset F$ is an ample divisor, this is enough to functorially reconstruct the unitary local system $\bbL|_F$ by the Narasimhan--Seshadri correspondence. We deduce from this functorial reconstruction that 
\begin{enumerate} 
\item $\bbV$ is not self-dual, and \smallskip 
\item $\bbV \simeq \bbW \oplus \bbW'$ where $\bbW$ is irreducible of Hodge length $2$, rank $ \ge 13$, and $\bbW'$ is unitary. 
\end{enumerate} 
It turns out that these two properties suffice to show that the connected monodromy group $M_s^\circ$ cannot be contained in any of the maximal semisimple subgroups of $E_6$, using a case by case analysis based on the branching of the $27$-dimensional irreducible representation.

\subsection{Acknowledgments} We thank Claire Voisin and Frank Gounelas for useful discussions. In the process of improving a first draft of this paper, we produced the proofs of \cref{thm:global-generation-line-bundle} and \cref{prop:kernel-cup-product} with help from Deepmind's Aletheia scaffold and Gemini Deep Think, respectively.

%%%%%%%%%%%%%%%%%%%%%%%%%%%%%%%%%%%
%
% RECONSTRUCTION SECTION
%
%%%%%%%%%%%%%%%%%%%%%%%%%%%%%%%%%%%

\section{The reconstruction technique}\label{sec:ivhs}

In this section we explain how to recover properties of an integrable connection on a smooth proper family from the local system underlying its relative de Rham cohomology, using a higher-dimensional version of the method developed in \cite{landesman2025big} for families of curves.

\subsection{The derivative of the period map} 

Let $S$ be a smooth irreducible complex variety and $\pi \colon \cX \to S$ a smooth projective morphism of relative dimension $d$. Let $\cL$ be a vector bundle on $\cX$ endowed with a flat unitary connection $\nabla \colon \cL \to \cL \otimes \Omega^1_{\cX}$. For any point $s\in S$, we can consider the de Rham cohomology 
\[
 \rH^\bullet_{\dR}(X/\bbC,(\cL, \nabla))
 \quad \text{of the fiber} \quad X \;:=\; \cX_s.
\]
It comes with a natural Hodge filtration. For all $p , q \ge 0$ the coherent sheaves $\rR^q \pi_\ast (\cL \otimes \Omega^p_{\cX / S})$ are locally free at~$s$ and, since $\nabla$ is unitary, we have an isomorphism
\[ \gr^p \rH^{p + q}_{\dR} (X / \bbC, (\cL, \nabla)) \iso \rH^q (X,  \cL \otimes \Omega^p_{X}) . \]
We are interested in the cohomology in degree $d=\dim(X)$. Varying the point $s\in S$, we consider the relative de Rham cohomology 
\[ \cV := \cH_{\textup{dR}}^d(\cX/S, (\cL, \nabla))=\bbR^d \pi_*(\Omega^\bullet_{\cX/S} \otimes \cL), \]
where $\Omega_{\cX/S}^\bullet \otimes \cL$ is the de Rham complex of the flat bundle $\cL$. Let $\cF^\bullet$ be the Hodge filtration on~$\cV$ and $\nabla\colon \cV\to \cV\otimes \Omega^1_S$ the Gauss--Manin connection. The Hodge filtration satisfies the Griffiths transversality condition:
\[ \nabla(\cF^p) \; \subset \; \cF^{p - 1} \otimes \Omega^1_{S}
\quad \text{for all $p\ge 0$}.
\]
On the graded pieces of the Hodge filtration, the Gauss--Manin connection gives rise to $\cO_S$-linear maps
\[ \gr^p \nabla \colon \gr^p \cV \to \gr^{p - 1} \cV \otimes \Omega^1_S \]
which make $\bigoplus_{p\ge 0} \gr^p \cV$ a graded Higgs bundle in the following sense:

\begin{definition} A Higgs bundle $\cH$  on $S$ with Higgs field $\phi$ is \emph{graded} if the underlying vector bundle comes with a grading $\cH = \bigoplus_{p \in \bbZ} \cH^p$ such that
\[ \phi(\cH^p) \subset \cH^{p-1} \otimes \Omega^1_S \qquad \text{for all $p \in \bbZ$}.\]
\end{definition}
Fix now $s \in S$. The goal of this section is to recover information about the vector bundle $\cL_{\vert X}$ on the fiber $X := \cX_s$ from the fiber of the Higgs bundle $\gr \cV$ at $s$. By taking the fiber at $s$ of a graded Higgs bundle on $S$, one obtains the following linear datum:

\begin{definition}An \emph{infinitesimal Higgs bundle} at $s$ is a finite-dimensional vector space $H$ together with a linear map
\[
\phi \colon H \to H \otimes \Omega^1_{S,s},
\]
called the \emph{Higgs field}, such that $\phi \wedge \phi = 0$. We say that the infinitesimal Higgs bundle is~\emph{graded} if the underlying vector space is endowed with a grading
\(
H = \bigoplus_{p \in \bbZ} H^p
\)
such that
\[
\phi(H^p) \subset H^{p-1} \otimes \Omega^1_{S,s}
\quad \text{for all } p \in \bbZ.
\]
We will omit the Higgs field $\phi$ from the notation if there is no risk of confusion.
\end{definition}

Any Higgs bundle on $S$ gives rise to an infinitesimal Higgs bundle by taking the fiber at $s$. Given an infinitesimal  Higgs bundle $H$ with Higgs field $\phi$, it will be often convenient to consider the adjoint \[\theta \colon T = T_{S,s} \to \End(H)\] to the Higgs field $\phi$, called \emph{adjoint Higgs field}.  For the adjoint Higgs field the condition $\phi \wedge \phi = 0$ reads
\[ \theta(\delta_1) \circ \theta(\delta_2) = \theta(\delta_2) \circ \theta(\delta_1)\]
for all $\delta_1, \delta_2 \in T$. Note that when $H$ is graded, the adjoint Higgs field is a map
\[ \theta \colon T  \to  \bigoplus_{p \in \bbZ} \Hom(H^p, H^{p-1})\]
 In particular, for every $n\in \bbN$, the $n$-th iterate of $\theta$ induces a linear map
\[  
\theta^{\circ n}\colon \quad \Sym^n T \to \bigoplus_{p\in \bbZ} \Hom(H^p, H^{p-n}),\qquad \delta_1 \cdots \delta_n \mapsto \theta(\delta_1) \circ \cdots \circ \theta(\delta_n).
\]
Morphisms and direct sums of infinitesimal Higgs bundles, and their graded version, are defined in the evident manner. The dual of an infinitesimal Higgs bundle $H$ is also defined in the obvious way, while for the graded version we will describe our weight conventions later.

\begin{example} \label{ex:cup-product} When $H$ is the fiber at $s$ of the Higgs bundle $\gr \cV$, we write
\[
H := H_{\cL},
\]
leaving the connection $\nabla$ understood when no confusion can arise. With this notation, we have
\[
H_{\cL}^p \;\simeq\; \rH^{d-p}(X, \cL \otimes \Omega_X^{p}).
\]
Moreover, the adjoint Higgs field
\[
\theta \colon T = T_{S,s} \longrightarrow \bigoplus_{p \ge 0} \Hom(H_\cL^p, H_\cL^{p-1})
\]
can be interpreted as the differential at $s$ of the period map of
\(
\cH_{\dR}^d(\cX/S, (\cL,\nabla)),
\)
although we will not use this interpretation in what follows. Instead, we use that, via the Kodaira–Spencer map
\[
\kappa \colon T \longrightarrow \rH^1(X, T_X),
\]
one can express $\theta$ in terms of the cup product:
\[
\langle \theta(t), \alpha \rangle
\;=\;
\alpha \cup \kappa(t)
\quad
\text{for all } t \in T
\text{ and } 
\alpha \in H^p \simeq \rH^{d-p}(X, \cL \otimes \Omega_X^{p}),
\]
see~\cite[th.~3.5]{KatzIHES}. We may therefore factor the $d$-th iterate of the Higgs field as shown in the commutative diagram
\begin{equation}\label{factorization-derivative-of-period-map}
\begin{tikzcd}
\Sym^d T \ar[d, "\Sym^d \kappa"'] \ar[r, "\theta^{\circ d}"]
  & \Hom(H_\cL^d, H_\cL^0) \\
\Sym^d \rH^1(X, T_X) \ar[r, "\mu"]
  & \rH^d(X, \bigwedge^d T_X) \ar[u, "c"]
\end{tikzcd}
\end{equation}
where $\mu$ is induced by the cup product and
\[
c \colon \rH^d(X, \omega_X^\vee)
\longrightarrow
\Hom(H_\cL^d, H_\cL^0),
\qquad
v \longmapsto \bigl[\alpha \longmapsto \alpha \cup v\bigr].
\]
Note that the anti-symmetry of the cup product on $\rH^1(X, T_X)$ exchanges the symmetric power in the source of $\mu$ with the exterior power in its target. We also observe that the lower-left part of the commutative square~\eqref{factorization-derivative-of-period-map} does not depend on the choice of the flat bundle $\cL$.
\end{example}

The previous example leads us to the following definition:

\begin{definition} \label{first-order-VHS} An infinitesimal graded Higgs bundle $H$ with adjoint Higgs field $\theta$ is \emph{compatible with $\pi \colon \cX \to S$} if
\[ \ker (\mu \circ \Sym^d \kappa) \subset \ker (\theta^{\circ d} \colon \Sym^d T \to \bigoplus_{p \in \bbZ}\Hom(H^{p}, H^{p- d})) \]
The above condition is equivalent to the existence of a  linear map $c \colon \im(\mu) \to \bigoplus_{p} \Hom(H^p, H^{p - d})$ making the following diagram commutative:
\begin{equation} \label{factorization-abstract-higgs-field}
\begin{tikzcd}
\Sym^d T \ar[r, "\theta^{\circ d}"] \ar[d, swap, "\Sym^d \kappa"]& \bigoplus_{p}\Hom(H^{p}, H^{p- d}) \\
\Sym^d \rH^1(X, T_X) \ar[r, "\mu"] & \im(\mu). \ar[u, "c"]
\end{tikzcd}
\end{equation}
\end{definition}
Let $H$ be an infinitesimal graded Higgs bundle which is compatible with $\pi$ and let $H^\vee$ be its dual as an infinitesimal Higgs bundle. We equip it with the grading
\[ (H^\vee)^p = (H^{d - p})^\vee, \]
so that the resulting infinitesimal graded Higgs bundle is compatible with $\pi$. When  $H = H_{\cL}$ is the fiber at $s$ of the Higgs bundle $\gr \cV$ we have
\[ (H_{\cL})^\vee \simeq H_{\cL^\vee}.\]

\subsection{The reconstruction result} \label{sec:reconstruction}

From now on we assume that the family $\pi \colon \cX \to S$ is \emph{versal} in the sense that the Kodaira--Spencer map $\kappa$ is surjective at $s$. Note that in this case the map $c$ in~\eqref{factorization-abstract-higgs-field} is determined uniquely by the $d$-th power of the adjoint Higgs field.

The goal of this section is to recover information about the vector bundle $\cL_{\vert X}$ on the fiber $X=\cX_s$ from the relative de Rham cohomology on a first order neighborhood of the point $s\in S$, or more precisely from the infinitesimal graded Higgs bundle $H_\cL$ in~\cref{ex:cup-product}. The idea is to recover information about $\cL_{\vert X}$ from the cup product map
\[
 c\colon \quad \rH^d(X, \textstyle \bigwedge^d T_X) \;\to\; \Hom(H_\cL^d, H_\cL^0)
\]
and to reconstruct the latter from the $d$-th iterate of the Higgs field via the diagram~\eqref{factorization-derivative-of-period-map}. Note that the diagram only determines $c$ on the image of the map \[\mu:\Sym^d \rH^1(X, T_X)\to  \rH^d(X, \textstyle \bigwedge^d T_X).\] To take care of this, let us rewrite the target of $\mu$ as
\[ \rH^d(X, \textstyle \bigwedge^d T_X) \; \simeq \; \rH^0(X, \omega_X^{\otimes 2})^\vee \]
via Serre duality. Then the kernel of the dual map $\mu^\vee$ becomes a subspace
$\ker(\mu^\vee) \subset \rH^0(X, \omega_X^{\otimes 2})$, and we denote by
\[ B \; \subset \; X\]
the base locus of the associated linear system. The evaluation map
$ \rH^0(X, \omega_X^{\otimes 2}) \otimes \cO_B \to \omega_X^{\otimes 2}$ factors through a map 
\begin{equation} \label{evaluation-on-B} 
\im(\mu^\vee) \otimes \cO_B \to \omega_X^{\otimes 2} \rvert_B 
\end{equation}
via the natural identification $\im(\mu^\vee)%\simeq \im(\mu)^\vee 
\simeq 
\rH^0(X, \omega_X^{\otimes 2}) / \ker(\mu^\vee)$. 

\begin{definition} Let $H$ be an infinitesimal graded Higgs bundle which is compatible with $\pi$. We 
consider the graded coherent sheaf $\cE(H):= \im \psi_H$ where $\psi_H = \bigoplus_p \gr^p \psi_H$ and $\gr^p \psi_H$ is
the composite morphism
\[ \gr^p \psi_{H} \colon H^p \otimes \omega_X^\vee \rvert_B 
\longrightarrow H^{p-d} \otimes \im(\mu^\vee) \otimes \omega_X^\vee \rvert_B 
\longrightarrow H^{p-d} \otimes \omega_X \rvert_B. \]
The first arrow above is given by the map $H^p \to H^{p - d} \otimes \im(\mu^\vee)$ adjoint to $c$ and the second is given by the evaluation map \eqref{evaluation-on-B}. Since all the maps in question depend functorially on $H$, this gives a functor
\[
\cE \colon
\left\{
\begin{tabular}{c}
infinitesimal graded Higgs bundles\\
compatible with $\pi \colon \cX \to S$
\end{tabular}\right\}
\too
\left\{
\begin{tabular}{c}
coherent
sheaves on $B$
\end{tabular} 
\right\}, \quad H \longmapsto \cE(H).
\]
\end{definition}

When  $H = H_{\cL}$ is the fiber at $s$ of the Higgs bundle $\gr \cV$ we have $\gr^p \psi_{H_\cL} = 0$ for $p \neq d$.
Hence in this case we will write
\[ \psi_{H_\cL} = \gr^d \psi_{H_\cL}.\]
To state the reconstruction result, we need to consider for any vector bundle $\cL$ on $X$ the evaluation map
\[ \eta_\cL \colon \rH^0(X, \cL \otimes \omega_X) \otimes \cO_X \to \cL \otimes \omega_X.\]
For a morphism $f\in \Hom(H_1, H_2)$ of infinitesimal graded Higgs bundles, let $\gr^i f\in \Hom(H_1^i, H_2^i)$ denote its component in degree $i\in \bbZ$. We then have:

\begin{proposition} \label{reconstruction-thm}
For every vector bundle $\cL$ on $\cX$ with an integrable unitary connection, we have a morphism 
$ \alpha_\cL \colon \im (\eta_\cL) \otimes \omega_X^\vee \rvert_B \to \cE(H_\cL)$ of $\cO_B$-modules
which is functorial in the flat bundle~$\cL$. Moreover, we have:
\begin{enumerate}
\item If the evaluation maps $\eta_\cL$ and $\eta_{\cL^\vee}$ are surjective on $B$, then $\alpha_\cL$ is an isomorphism 
\[\alpha_\cL \colon  \cL \rvert_B \stackrel{\sim}{\too} \cE(H_\cL).\]
\item Suppose that $B$ contains a divisor $B' \subset X$ such that $\omega^\vee_X(B')$ is ample. If $\eta_{\cL^\vee}$ is surjective on $B$, 
%and no nonzero global section of $\cL \otimes \omega_X$ on $X$ vanishes identically on $B$,
 then for any morphism $f \colon H \to H_\cL$ of infinitesimal graded Higgs bundle compatible with $\pi$, we have
\[ \cE(f) = 0 \implies \gr^d f = 0.\]
\end{enumerate}
\end{proposition}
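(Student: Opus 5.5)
The plan is to compute $\psi_{H_\cL}$ explicitly via Serre duality. We will see that it factors through $\cL\rvert_B$ as an evaluation map for $\cL$ followed by a coevaluation map for $\cL^\vee$. Both parts of the statement then follow from this factorization.

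\emph{Step 1: identifying $c$ for $H=H_\cL$.} We have $H_\cL^d=\rH^0(X,\cL\otimes\omega_X)$ and $H_\cL^0=\rH^d(X,\cL)$. Serre duality gives $\rH^d(X,\cL)\simeq \rH^0(X,\cL^\vee\otimes\omega_X)^\vee$ and $\rH^d(X,\omega_X^\vee)\simeq\rH^0(X,\omega_X^{\otimes 2})^\vee$. Under these identifications the cup product becomes
\[ c(v)(\alpha)(\beta)=v(\alpha\beta), \qquad \alpha\in\rH^0(X,\cL\otimes\omega_X),\ \beta\in\rH^0(X,\cL^\vee\otimes\omega_X), \]
where $\alpha\beta\in\rH^0(X,\omega_X^{\otimes2})$ is formed with the pairing $\cL\otimes\cL^\vee\to\cO_X$. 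This compatibility of cup product with Serre duality is standard and is the only real computation. Choose a basis $\beta_i$ of $\rH^0(\cL^\vee\otimes\omega_X)$ with dual basis $\beta_i^*$. The map adjoint to $c$ is then $\alpha\mapsto\sum_i\beta_i^*\otimes[\alpha\beta_i]$, with $[\alpha\beta_i]$ the class in $\im(\mu^\vee)$. Composing with the evaluation \eqref{evaluation-on-B}, $\psi_{H_\cL}$ factors as
\[ \rH^0(\cL\otimes\omega_X)\otimes\omega_X^\vee\rvert_B\xrightarrow{\ \eta_\cL\otimes\omega_X^\vee\ }\cL\rvert_B\xrightarrow{\ \epsilon\ }\rH^0(\cL^\vee\otimes\omega_X)^\vee\otimes\omega_X\rvert_B, \]
where $\epsilon(\ell)=\big(\beta\mapsto \langle \ell,\beta\rvert_B\rangle\big)$. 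Hence $\epsilon$ is the $\omega_X$-twisted dual of $\eta_{\cL^\vee}\rvert_B$.

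\emph{Step 2: definition of $\alpha_\cL$ and part (1).} Let $\im(\eta_\cL)\otimes\omega_X^\vee\rvert_B$ denote the image of the first arrow, i.e. the subsheaf of $\cL\rvert_B$ generated by restrictions of sections. Define $\alpha_\cL$ as the restriction of $\epsilon$ to this subsheaf. It surjects onto $\im\psi_{H_\cL}=\cE(H_\cL)$ by construction. A morphism of flat bundles induces compatible maps on global sections and on $\cL\rvert_B$, so the factorization is natural and $\alpha_\cL$ is functorial. Now suppose $\eta_{\cL^\vee}$ is surjective on $B$. This is a surjection of locally free sheaves on the possibly nonreduced scheme $B$, so it is locally split. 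Its dual $\epsilon$ is therefore injective, in fact a locally split injection. If moreover $\eta_\cL$ is surjective on $B$, the source of $\alpha_\cL$ is all of $\cL\rvert_B$. Thus $\alpha_\cL$ is injective and surjective, hence an isomorphism $\cL\rvert_B\simeq\cE(H_\cL)$.

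\emph{Step 3: part (2).} Functoriality of $\psi$ gives $\psi_{H_\cL}\circ(\gr^d f\otimes 1)=(\gr^0 f\otimes1)\circ\psi_H$. The left side is $\cE(f)$ composed with the surjection $H^d\otimes\omega_X^\vee\rvert_B\to\cE(H)$, so $\cE(f)=0$ forces $\psi_{H_\cL}(s\otimes -)=0$ for every $s$ in the image of $\gr^d f$. Here it matters that $H$ and $H_\cL$ use the same $\mu$, so compatibility with $\pi$ makes $c$ well defined for both. By Step 1 and the injectivity of $\epsilon$, we get $s\rvert_B=0$ as a section of $(\cL\otimes\omega_X)\rvert_B$. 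Since $B'\subset B$, this gives $s\in\rH^0(X,\cL\otimes\omega_X(-B'))$. The bundle $\omega_X(-B')$ is anti-ample and $\cL$ is flat unitary, hence numerically trivial, so this group vanishes. To see this, restrict to a covering family of curves, on which $\cL\otimes\omega_X(-B')$ is a unitary flat bundle twisted by a negative-degree line bundle and so has no sections. Therefore $s=0$ and $\gr^d f=0$.

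I expect the main obstacle to be Step 1: checking carefully, with correct signs and twists, that $c$ becomes the multiplication pairing under Serre duality. A second point needing care is the handling of the nonreduced base scheme $B$, where one must make sure that "image" and "restriction to $B$" commute as used in the definition of $\alpha_\cL$.
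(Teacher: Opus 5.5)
Your proposal is correct and follows essentially the same route as the paper. You identify $c$ with multiplication of sections via Serre duality, factor $\psi_{H_\cL}$ on $B$ as $\eta_\cL\otimes\id$ followed by the $\omega_X$-twisted dual of $\eta_{\cL^\vee}$, and read off (1) from surjectivity and injectivity, and (2) from the functoriality square together with the vanishing of $\rH^0(X,\cL\otimes\omega_X(-B'))$. Your remarks on local splitting over the nonreduced $B$ and the curve argument for the vanishing only fill in details the paper leaves implicit.
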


\begin{proof} 
By construction, the map $c$ is the restriction of the cup product map
\[ \rH^d(X, \omega_X^\vee) \to \Hom(\rH^0(X, \cL \otimes \omega_X), \rH^d(X, \cL))\]
to the image of $\mu$. Note that via Serre duality the adjoint to the previous map can be seen as the map 
$ \rH^0(X, \cL \otimes \omega_X) \to \Hom(\rH^0(X, \cL^\vee \otimes \omega_X), \rH^0(X, \omega_X^{\otimes 2}))$
given by multiplication of global sections. It follows that on $B$ the following diagram is commutative:
\[ 
\begin{tikzcd}[column sep=45pt, row sep=30pt]
\rH^0(X, \cL \otimes \omega_X) \otimes \omega_X^\vee \ar[d, "\eta_\cL \otimes \id", swap] \ar[r, "\psi_{H_\cL}"] & \rH^0(X, \cL^\vee \otimes \omega_X)^\vee \otimes \omega_X \\
\cL \ar[r, equal]& \cHom(\cL^\vee \otimes \omega_X, \omega_X) \ar[u, "(\eta_{\cL^\vee})^\vee \otimes \id", swap]
\end{tikzcd}
\]
The above factorization of $\psi_{H_\cL}$ induces the sought-for functorial morphism $\alpha_\cL$.

(1) Since $\eta_\cL$ and $\eta_{\cL^\vee}$ are assumed to be surjective on $B$, the left vertical arrow in the above diagram is surjective on $B$ and the right vertical arrow is injective on $B$. Hence the claim follows.

(2) The commutativity on $B$ of the diagram
\[ 
\begin{tikzcd}[column sep=45pt]
H^d  \otimes \omega_X^\vee  \ar[d, "{\gr^d f} \otimes \id", swap] \ar[r, "\gr^d \psi_H"]& H^0 \otimes \omega_X  \ar[d, "{\gr^0 f} \otimes \id"]\\
\rH^0(X, \cL \otimes \omega_X) \otimes \omega_X^\vee \ar[d, "{\eta_\cL} \otimes \id", swap] \ar[r, "\psi_{H_\cL}"] & \rH^0(X, \cL^\vee \otimes \omega_X)^\vee \otimes \omega_X \\
\cL \ar[r, equal]& \cHom(\cL^\vee \otimes \omega_X, \omega_X) \ar[u, "(\eta_{\cL^\vee})^\vee \otimes \id", swap]
\end{tikzcd}
\]
together with the fact that $\eta_{\cL^\vee}$ is surjective on $B$, gives that the vanishing of $\cE(f)$ implies that of the composite of the leftmost vertical arrows in the above diagram. It follows that $\gr^d f$ takes its values in sections $s \in \rH^0(X, \cL \otimes \omega_X)$ vanishing identically on $B$ hence on a divisor $B' \subset X$ as in the statement. But the assumptions imply that any such section vanishes identically on $X$: indeed,
\[ \rH^0(X, \cL \otimes \omega_X(- B')) = 0,\]
because $\mathscr{L}\otimes \omega_X(- B')$ is anti-ample, since $\omega_X^\vee(B')$ is ample and $\cL$ is a flat bundle.
\end{proof}

\begin{remark}\label{ampleness-base-locus} The existence of a divisor $B' \subset X$ contained in $B$ such that $\omega^\vee_X(B')$ is ample is trivially verified if $\mu$ is surjective, that is, $B = X$. It is also the case when $\mu$ has corank $1$ and $X$ is canonically polarized: in this case, $B' = B$ is a bicanonical divisor and $\omega^\vee_X(B') \iso \omega_X$ is ample.
\end{remark}

Note that a complex variation of Hodge structures is not determined uniquely by the underlying local system. But the above constructions essentially only depend on the local system $\bbV$:

\begin{lemma}\label{lem:independence-of-choices} 
Let $(H_1, \theta_1)$ and $(H_2, \theta_2)$ be the infinitesimal graded Higgs bundles attached to two different choices of a complex variation of Hodge structures on a given underlying local system $\bbV$ on $S$. 
\begin{enumerate} 
\item If $(H_1, \theta_1)$ is compatible with~$\pi$, then so is $(H_2, \theta_2)$.\smallskip
\item In this case we have a canonical isomorphism 
$\cE(H_1) \simeq \cE(H_2)$.
\end{enumerate}
Hence in this situation we will also denote the coherent sheaf in (2) by $\cE(\bbV):=\cE(H_1)$.
\end{lemma}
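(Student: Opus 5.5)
The plan is to show that the two graded objects $(H_1,\theta_1)$ and $(H_2,\theta_2)$ agree once the grading is forgotten, and that both compatibility with $\pi$ and the sheaf $\cE(H)$ depend only on the ungraded infinitesimal Higgs bundle.

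\emph{Step 1: identifying the ungraded Higgs bundles.} I will use Simpson's non-abelian Hodge correspondence, which is functorial. The Higgs bundle $\bigoplus_p \gr^p_{F}\cV$ attached to any complex polarizable variation of Hodge structures on $\bbV$ is, forgetting the grading, the Higgs bundle attached to $\bbV$ by this correspondence. Concretely, the polarization is a harmonic metric, and the Hodge grading is the extra datum of a $\bbC^\ast$-fixed point structure on it.

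Harmonic metrics on $\bbV$ are unique up to automorphisms of the semisimple local system $\bbV$. Write $\bbV \simeq \bigoplus_i \bbV_i \otimes W_i$ with the $\bbV_i$ pairwise non-isomorphic and irreducible. Then:
\begin{itemize}
\item on each $\bbV_i$ the variation of Hodge structures is unique up to a shift of the grading;
\item on each multiplicity space $W_i$ one may choose an arbitrary grading, but the Higgs field acts trivially on $W_i$.
\end{itemize}
Hence there is a canonical isomorphism of ungraded Higgs bundles between the two choices, and taking fibers at $s$ gives an isomorphism $\varphi\colon (H_1,\theta_1)\simeq (H_2,\theta_2)$ of ungraded infinitesimal Higgs bundles, with $\varphi\,\theta_1(t)\,\varphi^{-1}=\theta_2(t)$ for all $t\in T$.

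The main point to check carefully is canonicity. The grading of $H_2$ need not correspond to that of $H_1$ under $\varphi$, since it is shifted on the irreducible summands and arbitrary on the multiplicity spaces. So I must make sure that nothing below uses the grading beyond the fact that $\theta_j^{\circ d}$ lowers degree by exactly $d$.

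\emph{Step 2: compatibility, part (1).} Since $H_j$ is graded and $\theta_j$ has degree $-1$, we have
\[ \ker\bigl(\theta_j^{\circ d}\colon \Sym^d T\to \textstyle\bigoplus_p \Hom(H_j^p,H_j^{p-d})\bigr)=\ker\bigl(\theta_j^{\circ d}\colon \Sym^d T\to \End(H_j)\bigr). \]
By Step 1 the right-hand side is the same subspace of $\Sym^d T$ for $j=1,2$, because conjugation by $\varphi$ is injective. Hence the condition $\ker(\mu\circ\Sym^d\kappa)\subset\ker\theta^{\circ d}$ holds for $H_2$ if and only if it holds for $H_1$.

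\emph{Step 3: the sheaf $\cE$, part (2).} By versality, the map $c$ of \eqref{factorization-abstract-higgs-field} is determined by $\theta^{\circ d}$. Viewed as a map $c_j\colon \im(\mu)\to\End(H_j)$, it therefore satisfies $c_2=\varphi\circ c_1(\cdot)\circ\varphi^{-1}$.

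The total morphism $\psi_{H_j}=\bigoplus_p\gr^p\psi_{H_j}\colon H_j\otimes\omega_X^\vee|_B\to H_j\otimes\omega_X|_B$ is built from the adjoint of $c_j$ followed by the evaluation map \eqref{evaluation-on-B}. So it is defined without reference to the grading, and it satisfies
\[ \psi_{H_2}=(\varphi\otimes\id)\circ\psi_{H_1}\circ(\varphi^{-1}\otimes\id). \]
Since $\cE(H_j)=\im\psi_{H_j}$, the isomorphism $\varphi\otimes\id$ of $H_1\otimes\omega_X|_B$ onto $H_2\otimes\omega_X|_B$ restricts to an isomorphism $\cE(H_1)\simeq\cE(H_2)$. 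Only the grading of $\cE(H_j)$ may differ, which is why the statement claims an isomorphism of coherent sheaves only.

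Canonicity follows from that of $\varphi$. Different choices of $\varphi$ differ by an automorphism of the Higgs bundle, which commutes with $c_1$, so I would additionally check that the induced map on images is independent of this choice or at least functorial in $\bbV$.
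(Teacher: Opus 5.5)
Your proof is correct and uses the same input as the paper: the isotypic decomposition $\bbV=\bigoplus_i\bbV_i\otimes W_i$, uniqueness of the variation of Hodge structures on each simple $\bbV_i$ up to shift, and a free grading on the multiplicity spaces $W_i$. The paper phrases the conclusion as $\cE(H_\alpha)\simeq\bigoplus_i\cE(\bbV_i)\otimes W_i$, whereas you transport along an ungraded isomorphism $\varphi$. Built from the isotypic decomposition, $\varphi$ is canonical, so your closing worry about other choices of $\varphi$ is moot.

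The detour through harmonic metrics is unnecessary, since your two bullets already produce $\varphi$. As stated it is also not right on a non-proper base $S$: there, uniqueness of harmonic metrics requires Mochizuki's tameness and pure-imaginary conditions.
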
 

\begin{proof}
Since $\mathbb{V}$ is semisimple, it admits an isotypic decomposition $\mathbb{V}=\bigoplus_i \mathbb{V}_i\otimes W_i$ where the $\bbV_i$ are pairwise non-isomorphic simple local systems $\bbV_i$ and the $W_i$ are vector spaces. By prop.~4.3.13 and rem.~4.3.14 in \cite{MHMProject}, each $\mathbb{V}_i$ underlies a complex variation of Hodge structure which is unique up to a shift of the Hodge bidegrees, and once we fix a choice of these shifts, any complex variation of Hodge structures on $\bbV$ is obtained by choosing a bigrading on each $W_i$. The associated infinitesimal graded Higgs bundle is compatible with $\pi$ iff the one associated with each $\bbV_i$ is, hence claim (1) follows. Moreover, for $\alpha = 1,2$ the isotypic decomposition of the local system induces a canonical isomorphism
\[
\cE(H_\alpha) \;\simeq\; \bigoplus_i \mathscr{E}(\bbV_i)\otimes W_i 
\]
where the right hand side does not depend on the chosen bigrading on $W_i$, hence (2) follows.
\end{proof}

The vanishing statement in \cref{reconstruction-thm} (2) can be put in a more symmetric form, using that the functor $\cE$ is compatible with duality in the following sense. Given an infinitesimal graded Higgs bundle $H$ compatible with $\pi$, we have a nondegenerate pairing
\[\beta_H \colon \cE(H) \times \cE(H^\vee) \rightarrow \cO_B,  \]
defined for all local sections $x$ of $H^p \otimes \omega_X^\vee \rvert_B$ and $y$ of $(H^{d-p})^\vee \otimes \omega_X^\vee \rvert_B$ by
\[ \beta_H(\psi_H(x),\psi_{H^\vee}(y)) =  \langle \psi_{H}(x), y\rangle_H = \langle x, \psi_{H^\vee}(y)\rangle_{H^\vee}, \]
where $\langle -, - \rangle_H$ is the duality pairing between the target of $\psi_{H}$ and the source of $\psi_{H^\vee}$. For any morphism $f \colon H \to H'$ of infinitesimal graded Higgs bundle compatible with $\pi$, we have
\begin{equation} \label{adjoint-reconstruction}
 \beta_{H'}(\cE(f)(x), y) \;=\; \beta_H(x, \cE(f^\vee)(y)).
\end{equation}

\begin{corollary} \label{decomposition-of-triples} 
Suppose that $B$ contains a divisor $B' \subset X$ such that $\omega^\vee_X(B')$ is ample. Let $\cL$ be a vector bundle on $\cX$ with an integrable unitary connection. Suppose that $\eta_{\cL}$ and $\eta_{\cL^\vee}$ are surjective on $B$. 
Then, 
\begin{enumerate}
\item for any endomorphism $f\in \End(H_\cL)$ we have
\[ \cE(f) = 0 \implies \gr^0 f = \gr^d f = 0.\]
\item if $\cL$ is a line bundle and $\rH^0(B, \cO_B) = \bbC$, for any direct sum decomposition $H_\cL \simeq H_1 \oplus \cdots \oplus H_r$ as infinitesimal graded Higgs bundles, there is $i \in \{1, \dots, r\}$ such that both for $p = 0$ and for $p=d$, we have
\[
H_j^p = 
\begin{cases}
H^p_\cL & \text{if $j = i$},\\
0 & \text{otherwise}.
\end{cases}
\]
\end{enumerate}
\end{corollary}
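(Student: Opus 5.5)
For part (1) the plan is to get the vanishing of $\gr^d f$ from \cref{reconstruction-thm}~(2) directly, and the vanishing of $\gr^0 f$ by applying the same statement to the dual endomorphism via the adjunction formula \eqref{adjoint-reconstruction}.

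\textbf{Part (1).} Let $f\in\End(H_\cL)$ with $\cE(f)=0$.
\begin{enumerate}
\item Since $\eta_{\cL^\vee}$ is surjective on $B$, \cref{reconstruction-thm}~(2) applied to $f\colon H_\cL\to H_\cL$ gives $\gr^d f=0$.
\item Now take the dual $f^\vee\in\End(H_\cL^\vee)=\End(H_{\cL^\vee})$. With the grading convention $(H^\vee)^p=(H^{d-p})^\vee$, its degree-$d$ component is $(\gr^0 f)^\vee$.
\item Since the pairings $\beta$ are nondegenerate, \eqref{adjoint-reconstruction} shows that $\cE(f)=0$ implies $\cE(f^\vee)=0$.
\item The hypotheses are symmetric under $\cL\leftrightarrow\cL^\vee$; in particular $\eta_{(\cL^\vee)^\vee}=\eta_\cL$ is surjective on $B$. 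So \cref{reconstruction-thm}~(2) applies to $f^\vee$ and gives $(\gr^0 f)^\vee=0$, i.e.\ $\gr^0 f=0$.
\end{enumerate}

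\textbf{Part (2).} Let $e_j\in\End(H_\cL)$ be the idempotent projector onto $H_j$.
\begin{enumerate}
\item Since $\cE$ is a functor, the $\cE(e_j)$ are orthogonal idempotents of $\cE(H_\cL)$ summing to the identity.
\item By \cref{reconstruction-thm}~(1), $\cE(H_\cL)\simeq\cL|_B$, which is a line bundle on $B$. Hence
\[
\End_{\cO_B}(\cE(H_\cL))\simeq \rH^0(B,\cO_B)=\bbC .
\]
\item The only idempotents in $\bbC$ are $0$ and $1$. Orthogonality together with summing to $1$ forces exactly one index $i$ with $\cE(e_i)=\id$, and $\cE(e_j)=0$ for all $j\neq i$.
\item Apply part (1) to $e_j$ for $j\ne i$: this gives $\gr^0 e_j=\gr^d e_j=0$, so $H_j^0=H_j^d=0$.
\item Since $H_\cL^p=\bigoplus_j H_j^p$, it follows that $H_i^p=H_\cL^p$ for $p=0,d$.
\end{enumerate}

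\textbf{Points to check.} The main things to verify are the following.
\begin{itemize}
\item The functoriality of $\cE$ must really be additive and compatible with the identification $\alpha_\cL$, so that idempotents map to idempotents. This should follow from the definition of $\psi_H$ as a natural transformation.
\item The grading bookkeeping of $f^\vee$ must be correct. We use that $H_\cL^\vee\simeq H_{\cL^\vee}$ as graded infinitesimal Higgs bundles, so that \cref{reconstruction-thm}~(2) applies verbatim with $\cL^\vee$ in place of $\cL$.
\end{itemize}
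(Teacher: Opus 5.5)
Your proposal is correct and follows the paper's proof essentially step for step. In part (1) you apply \cref{reconstruction-thm}~(2) to $f$, and then, via the adjunction \eqref{adjoint-reconstruction} and the nondegeneracy of $\beta$, to $f^\vee$; in part (2) you use the projectors onto the summands, whose images under $\cE$ are idempotents in $\End(\cL|_B)=\bbC$. Your ending of (2) is slightly more explicit than the paper's: you apply (1) only to the projectors with $\cE(e_j)=0$ and read off $H_i^p=H_\cL^p$ from the direct sum, which avoids the paper's implicit use of (1) on $\id-f_i$.
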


\begin{proof} 
(1) By \cref{reconstruction-thm} (2), it suffices to prove that $\gr^0 f = 0$. From~\cref{adjoint-reconstruction} we know that $\cE(f^\vee)$ is adjoint to $\cE(f)$ and this latter vanishes, we have that $\cE(f^\vee)$ also vanishes by nondegeneracy of~$\beta_{H_\cL}$. The result then follows from \cref{reconstruction-thm} (2) applied to $f^\vee$.

(2) Since $\cL$ is a line bundle and $\rH^0(B, \cO_B) = \bbC$, the only endomorphisms of $\cE(H_\cL) \simeq \cL \rvert_B$ are homotheties with ratio in $\bbC$. Identify $H_i$ with a subobject of $H := H_\cL$ and let $f_i \colon H \to H$ be the projection onto $H_i$ followed by the inclusion of $H_i$ in $H$. The morphism $f_i$ is idempotent, hence so is $\cE(f_i)$. It follows that $\cE(f_i) = \lambda_i \id$ with $\lambda_i \in \{ 0, 1 \}$, so statement (1) implies
\[ \gr^0(f_i) = \gr^d(f_i) = \lambda_i \id.\]
Since $f_1 + \cdots + f_r = \id$ we must have $\lambda_i = 0$ for all but one $i$.
\end{proof}

\begin{corollary} \label{cor:reconstruction-conservative} 
Suppose that $B$ contains an ample divisor $B' \subset X$ such that $\omega^\vee_X(B')$ is ample. For $i = 1, 2$ let $\cL_i$ be a line bundle on $\cX$ with an integrable unitary connection $ \nabla_i$ such that  $\eta_{\cL_i\oplus \cL_i^\vee}$ is surjective on $B$. If the infinitesimal graded Higgs bundles $H_{\cL_1}$ and $H_{\cL_2}$ are isomorphic, then so are the flat bundles $(\cL_1, \nabla_1) \rvert_X$ and $(\cL_2, \nabla_2) \rvert_X$.
\end{corollary}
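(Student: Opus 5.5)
Given an isomorphism $g\colon H_{\cL_1}\to H_{\cL_2}$ of infinitesimal graded Higgs bundles, the plan is to transport it through the functor $\cE$. First, $\eta_{\cL_i\oplus\cL_i^\vee}=\eta_{\cL_i}\oplus\eta_{\cL_i^\vee}$, so its surjectivity on $B$ means both $\eta_{\cL_i}$ and $\eta_{\cL_i^\vee}$ are surjective on $B$. Then \cref{reconstruction-thm}~(1) gives isomorphisms $\alpha_{\cL_i}\colon \cL_i\rvert_B\xrightarrow{\sim}\cE(H_{\cL_i})$. Functoriality of $\cE$ turns $g$ into an isomorphism $\cE(g)\colon \cE(H_{\cL_1})\xrightarrow{\sim}\cE(H_{\cL_2})$, with inverse $\cE(g^{-1})$. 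Composing, we get an isomorphism of line bundles $\cL_1\rvert_B\simeq\cL_2\rvert_B$ on $B$, and hence on the divisor $B'\subset B$ after restriction.

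Next I would pass from $B'$ to $X$. Since $\cL_i\rvert_X$ carry flat unitary connections, they come from unitary rank one local systems $\bbL_i$ on $X$, i.e.\ characters of $\pi_1(X)$. Since $B'$ is an ample divisor, the Lefschetz hyperplane theorem (in the form valid for possibly singular ample divisors, e.g.\ Goresky--MacPherson; for $\dim X\ge 2$) shows that $\pi_1(B')\to\pi_1(X)$ is surjective. Therefore a unitary character of $\pi_1(X)$ is determined by its restriction to $B'$. So it suffices to show $\bbL_1\rvert_{B'}\simeq\bbL_2\rvert_{B'}$ as unitary local systems.

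This step is the main obstacle: we only know the holomorphic line bundles agree on $B'$, and $B'$ may be singular or nonreduced. I would apply the Narasimhan--Seshadri / unitary flat correspondence on a resolution of $B'_{\mathrm{red}}$ (or on each component). The key point is that a holomorphic line bundle admits at most one unitary flat structure: two unitary flat structures on a line bundle on a compact Kähler manifold differ by a flat unitary character $\chi$ with trivial underlying holomorphic bundle. The constant section then gives a global holomorphic section of a unitary flat bundle, and such a section must be flat by the maximum principle/Hodge theory, so $\chi$ is trivial. Pulling back to a desingularization $\tilde B\to B'_{\mathrm{red}}$ gives $\bbL_1\rvert_{\tilde B}\simeq\bbL_2\rvert_{\tilde B}$.

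It remains to descend this to $X$. Here I would instead argue directly with $\bbL:=\bbL_1\otimes\bbL_2^\vee$ and the line bundle $\cL:=\cL_1\otimes\cL_2^\vee$. If $\dim X\ge2$, choose a smooth ample divisor in a multiple of $B'$ or use Lefschetz for $\pi_1$ of $\tilde B$ in the ample case. Alternatively, and more robustly, the triviality of $\cL\rvert_{B'}$ gives a section $\cO_X(-B')\to\cL^{\vee}$, i.e.\ $\cL$ is trivial on $B'$. For a unitary flat $\cL$ one has $\rH^0(X,\cL(mB'))$ growing like that of $\cO(mB')$ with sections restricting to a trivialization on $B'$. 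A nonzero section of $\cL\otimes\cL_0$ with $\cL_0$ flat of degree zero and the vanishing argument from \cref{reconstruction-thm}~(2) then force $\cL\simeq\cO_X$, and hence $\bbL$ trivial by uniqueness of unitary flat structure. If this lifting argument is delicate, I would fall back on the topological Lefschetz route above.
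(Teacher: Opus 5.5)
Your first step is correct and matches the paper: surjectivity of $\eta_{\cL_i\oplus\cL_i^\vee}$ on $B$ amounts to surjectivity of both $\eta_{\cL_i}$ and $\eta_{\cL_i^\vee}$. So \cref{reconstruction-thm}~(1) together with functoriality of $\cE$ gives $\cL_1\rvert_B\simeq\cL_2\rvert_B$. Hence $\cL:=(\cL_1\otimes\cL_2^\vee)\rvert_X$ is trivial on $B'$.

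The gap is the passage from $B'$ to $X$, which you never actually carry out. Your desingularization step only shows that the isomorphism class of $\bbL$ is trivial on $\tilde B$, and this loses information. A character of $\pi_1(B'_{\mathrm{red}})$ can die on the normalization: for a nodal rational curve every character does. Moreover, the Lefschetz theorem says nothing about $\tilde B$, since it is not a divisor in $X$, and $\pi_1(\tilde B)\to\pi_1(B'_{\mathrm{red}})$ need not be surjective. Passing to a smooth member of $|mB'|$ does not help either, because you only know triviality on $B$, not on $mB'$ or on other members of the linear system. Your ``more robust'' alternative is not an argument. The growth of $h^0(X,\cL(mB'))$ is irrelevant. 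The vanishing $\rH^0(X,\cL\otimes\omega_X(-B'))=0$ used in \cref{reconstruction-thm}~(2) controls injectivity of restriction to $B'$, whereas you need surjectivity.

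The missing idea is the one in \cref{injectivity-pic0-restriction-ample-divisor}. Consider the sequence $0\to\cL(-B')\to\cL\to\cL\rvert_{B'}\to 0$. Since $\cL$ is numerically trivial and $B'$ is ample, $\cL(-B')$ is anti-ample. Kodaira vanishing then gives $\rH^1(X,\cL(-B'))=0$ when $\dim X\ge 2$. So the nowhere vanishing section of $\cL\rvert_{B'}$ lifts to a nonzero section of $\cL$ on $X$. A numerically trivial line bundle with a nonzero section is trivial, and uniqueness of the unitary flat structure on a holomorphic line bundle finishes the proof.

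Your topological route can be repaired, but only by working with the given trivialization $s$ rather than with isomorphism classes. The pullback of $s$ to each component of $\tilde B$ is a nowhere vanishing holomorphic section of a unitary flat line bundle on a compact K\"ahler manifold, hence flat. Because $\tilde B\to B'_{\mathrm{red}}$ is proper and surjective, $s$ is then locally constant on $B'_{\mathrm{red}}$, so $\bbL\rvert_{B'_{\mathrm{red}}}$ is trivial. Lefschetz surjectivity of $\pi_1(B'_{\mathrm{red}})\to\pi_1(X)$ then concludes. As written, however, neither route is completed.
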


\begin{proof} Since the connections on $\cL_1$ and $\cL_2$ are unitary, it is enough to show that the line bundles $\cL_1 \rvert_X$ and $\cL_2 \rvert_X$ are isomorphic. \Cref{reconstruction-thm} (1) implies that the lines bundles $\cL_1 \rvert_B$ and $\cL_2 \rvert_B$ are isomorphic. If $B = X$, then there is nothing to prove. If $B$ contains an ample divisor, the result follows from \cref{injectivity-pic0-restriction-ample-divisor} below.
\end{proof}

\begin{lemma} \label{injectivity-pic0-restriction-ample-divisor} For any ample divisor  $D$ in a smooth projective variety $Y$ of dimension at least $2$, the natural map $\Pic^0(Y) \to \Pic(D)$
is injective.
\end{lemma}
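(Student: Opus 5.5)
Let $\cP \in \Pic^0(Y)$ be a line bundle whose restriction $\cP\rvert_D$ is trivial. The plan is to show $\cP \iso \cO_Y$ by exhibiting a nonzero section of $\cP$; a nonzero section of a numerically trivial line bundle is nowhere vanishing, because its zero divisor would be effective and numerically trivial, hence zero. The section will be obtained by lifting the trivializing section of $\cP\rvert_D$ to $Y$. Use the restriction sequence
\[ 0 \to \cP(-D) \to \cP \to \cP\rvert_D \to 0 .\]
A section of $\cP\rvert_D \iso \cO_D$ that is nonzero on every connected component lifts to $\rH^0(Y,\cP)$ as soon as $\rH^1(Y, \cP(-D)) = 0$. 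Serre duality rewrites this group as $\rH^{\dim Y - 1}(Y, \omega_Y \otimes \cP^\vee(D))^\vee$. Since $\cP^\vee$ is numerically trivial, $\cP^\vee(D)$ is ample, so the Kodaira vanishing theorem (we are over $\bbC$) gives
\[ \rH^{i}(Y, \omega_Y \otimes \cP^\vee(D)) = 0 \quad \text{for } i>0 .\]
Taking $i = \dim Y - 1 \geq 1$, which is where the hypothesis $\dim Y \ge 2$ enters, we get $\rH^1(Y, \cP(-D)) = 0$.

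Hence $\rH^0(Y,\cP) \to \rH^0(D, \cP\rvert_D)$ is surjective. Because $\rH^0(D,\cO_D) \neq 0$, the bundle $\cP$ has a nonzero global section $\sigma$; in fact we can choose $\sigma$ restricting to the constant section $1$ on $D$. By the observation above, $\sigma$ trivializes $\cP$, and the map $\Pic^0(Y) \to \Pic(D)$ is injective.

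The only delicate point is the vanishing step, that is, making sure the ampleness of $D$ is what drives Kodaira vanishing with the twist by $\cP^\vee$. This works because numerically trivial twists preserve ampleness. We never need $D$ to be connected or reduced: all we use is that $\cO_D$ has a nonzero global section, which holds since $D$ is nonempty. $D$ is nonempty because it is ample on a variety of positive dimension.
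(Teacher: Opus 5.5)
Your proposal is correct and follows the same route as the paper's proof. Both lift the trivializing section of $\cP\rvert_D$ through the restriction sequence, using Kodaira vanishing to kill $\rH^1(Y,\cP(-D))$; the paper applies vanishing directly to the antiample bundle $\cP(-D)$, while you pass through Serre duality, which amounts to the same thing. Both then conclude because a nonzero section of an algebraically (or numerically) trivial line bundle trivializes it.
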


\begin{proof} Let $\cL$ be an algebraically trivial line bundle on $Y$ and suppose that its restriction to $D$ is trivial, i.e. it admits a nowhere vanishing section $\sigma \in \rH^0(D, \cL)$. Taking global sections of the short exact sequence $0 \to \cL(-D) \to \cL \to \cL \rvert_D \to 0$ yields a short exact sequence
\[ 0 \too \rH^0(Y, \cL(-D)) \too \rH^0(Y, \cL) \too \rH^0(D, \cL) \too 0\]
because $\rH^1(Y, \cL(-D)) = 0$ as $\cL(-D)$ is antiample, by the Kodaira vanishing theorem. The section $\sigma$ then lifts to a nonzero section of $\cL$ on $Y$, yielding an isomorphism $\cO_Y \iso \cL$ since $\cL$ is algebraically trivial.
\end{proof}

\subsection{Consequences for monodromy} 
Let $\cL$ be a line bundle on $\cX$ with an integrable connection~$\nabla$ and let $\bbL = \ker \nabla$ be the local system on $\cX(\bbC)$ of its parallel sections. We consider the local system
\[ \bbV \; := \; \rR^d \pi_\ast \bbL. \]
Suppose moreover that $\cX \to S$ is projective and that the local system $\bbL$ has unitary monodromy. In this case $\bbV$ underlies a natural polarizable complex variation of Hodge structures, and in particular admits a Hodge decomposition, for any $t \in S$,
\[ \bbV_t \; = \; \bigoplus_{p + q = d} \bbV_t^{p,q} \quad \text{with} \quad \bbV_t^{p, q} \simeq \rH^q (\cX_t, \cL \otimes \Omega^p_{\cX / S}).\]

\begin{theorem} \label{th:consequences-monodromy} 
Suppose that $B$ contains a divisor $B' \subset X$ such that $\omega^\vee_X(B')$ is ample, and $\rH^0(B, \cO_B) = \bbC$.
If $\eta_{\cL}$ and $\eta_{\cL^\vee}$ are
surjective on $B$, 
then there is a unique simple complex local system $\bbW \subset \bbV = \rR^d \pi_\ast \bbL$ such that
\[ \bbV^{d,0}_t \oplus \bbV^{0,d}_t \; \subset \; \bbW_t \qquad \text{for all $t \in S$}.\]
If $d\geq 1$ and if $h^0(X,\mathscr{L}\otimes  \omega_X)$ and $h^d(X,\mathscr{L})$ are both non-zero, we have 
\[ \rk (\bbW) \ge h^0(X,\mathscr{L}\otimes  \omega_X)+h^d(X,\mathscr{L}) + d-1.\]
\end{theorem}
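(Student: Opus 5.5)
The plan has three parts: existence of $\bbW$, uniqueness, and the rank bound. The key input for the first two is \cref{decomposition-of-triples}~(2), applied to the isotypic decomposition of $\bbV$.

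For existence, note that $\bbV$ underlies a polarizable complex VHS and is therefore semisimple. Write $\bbV = \bigoplus_j \bbW_j$ as a direct sum of simple local systems. As in the proof of \cref{lem:independence-of-choices}, we may choose a complex VHS on each $\bbW_j$ so that the given VHS on $\bbV$ is the direct sum. Concretely, the Hodge decomposition of $\bbV$ is compatible with the isotypic decomposition, and inside each isotypic block $\bbV_i \otimes W_i$ we can pick a basis of $W_i$ adapted to the bigrading. Taking fibres at $s$ gives a decomposition $H_\cL \simeq \bigoplus_j H_{\bbW_j}$ of infinitesimal graded Higgs bundles. \Cref{decomposition-of-triples}~(2) applies because $\cL$ is a line bundle, $\rH^0(B,\cO_B)=\bbC$, and $\eta_\cL,\eta_{\cL^\vee}$ are surjective on $B$. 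It yields a single index $i$ with $H^0_\cL \oplus H^d_\cL \subset H_{\bbW_i}$. Since $H^d_\cL=\bbV^{d,0}_s$ and $H^0_\cL = \bbV^{0,d}_s$, we get $\bbV_s^{d,0}\oplus\bbV_s^{0,d}\subset \bbW_{i,s}$. To pass from $s$ to all $t$, I would run the argument at a general point, where the hypotheses hold on a Zariski open set. Then $\bbV^{d,0}\oplus\bbV^{0,d}\subset\bbW_i$ holds on a dense open subset, and hence everywhere, because both sides are holomorphic subbundles (of $\cF^d$ and its conjugate) and containment is a closed condition.

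For uniqueness, let $\bbW'$ be another simple sub-local system with the same property. If $\bbW'\neq\bbW$ as subobjects, then $\bbW\cap\bbW'=0$ by simplicity, and $\bbW\oplus\bbW'$ extends to a decomposition of $\bbV$ by semisimplicity. This contradicts \cref{decomposition-of-triples}~(2), provided $H^d_\cL\oplus H^0_\cL\neq0$. In the degenerate case where this space is zero, uniqueness must be read appropriately, or the hypotheses already exclude it. The one subtle point is that a sub-local system isomorphic to $\bbW$ but not equal to it also gives such a splitting, so this argument handles it as well.

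For the rank bound, restrict to $\bbW$ the Higgs field $\theta$, whose iterate $\theta^{\circ d}\colon \Sym^dT\to\Hom(H^d,H^0)$ is nonzero. Nonvanishing follows from the reconstruction: $\cE(H_\cL)\simeq \cL|_B\neq0$ and $B \neq \emptyset$ since it contains $B'$. So there are $\delta_1,\dots,\delta_d\in T$ and $\alpha\in H^d$ with $\theta(\delta_1)\cdots\theta(\delta_d)\alpha\neq0$. The intermediate vectors $\theta(\delta_k)\cdots\theta(\delta_d)\alpha$ for $k=2,\dots,d$ are nonzero, and they lie in the graded pieces $H^1,\dots,H^{d-1}$ of $H_{\bbW}$. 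Each of these $d-1$ pieces is therefore nonzero, so the rank is at least $h^d+h^0+(d-1)$, where $h^d = h^0(\cL\otimes\omega_X)$ and $h^0=h^d(X,\cL)$.

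I expect the main obstacle to be the nonvanishing of $\theta^{\circ d}$ on $H^d_\cL$. It follows from \cref{reconstruction-thm}~(1): $\psi_{H_\cL}\neq 0$ precisely because $\cE(H_\cL)\simeq\cL|_B$ is nonzero. Some care is also needed to ensure the hypotheses hold at general $t$, and not only at $s$, for the extension to all $t$.
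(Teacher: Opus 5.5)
Your existence argument is the paper's: decompose $\bbV$ into simple sub-variations, take fibres at $s$, and apply \cref{decomposition-of-triples}~(2). The step extending the inclusion from $s$ to all $t\in S$, however, does not work as you set it up. The hypotheses of the theorem are imposed only at the single point $s$: the base locus $B$ is defined from $\mu$ and $\kappa$ at $s$, and the conditions on $B'$, on $\rH^0(B,\cO_B)$ and on surjectivity of $\eta_\cL,\eta_{\cL^\vee}$ on $B$ concern only $X=\cX_s$. Nothing in the statement makes them hold on a Zariski open set, so you cannot ``run the argument at a general point''. You also do not need to. Since $\bbW\subset\bbV$ underlies a sub-variation, $\bbW^{d,0}_t\subset\bbV^{d,0}_t$ for every $t$. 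Both dimensions are independent of $t$, because Hodge numbers of a variation are locally constant and $S$ is connected. Equality at $s$ therefore forces equality everywhere, and likewise for $(0,d)$; this is exactly the paper's argument.

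Uniqueness is more direct than your version, and your detour through \cref{decomposition-of-triples}~(2) is not justified as written. An arbitrary simple $\bbW'\subset\bbV$ need not underlie a sub-variation, so $\bbV=\bbW\oplus\bbW'\oplus(\cdots)$ need not be a splitting of infinitesimal graded Higgs bundles. Instead, note that $\rH^0(B,\cO_B)=\bbC$ forces $B\neq\emptyset$, so surjectivity of $\eta_\cL$ on $B$ gives $\bbV^{d,0}_s\simeq\rH^0(X,\cL\otimes\omega_X)\neq 0$. Two simple sub-local systems both containing this space meet nontrivially and hence coincide. The same remark shows your ``degenerate case'' is excluded by the hypotheses.

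For the rank bound you take a genuinely different and valid route.
\begin{itemize}
\item The paper argues globally. If $\bbW^{p,d-p}=0$ for some $0<p<d$, Griffiths transversality makes a step of the Hodge filtration of $\bbW$ a flat subbundle that is neither $0$ nor everything, contradicting simplicity.
\item You argue infinitesimally at $s$. You show $\theta^{\circ d}\neq0$ on $H^d_\cL$, and then a nonzero composite $\theta(\delta_1)\cdots\theta(\delta_d)\alpha$ inside the $\theta$-stable $H_\bbW$ produces nonzero vectors in $H^{d-1}_\bbW,\dots,H^1_\bbW$.
\end{itemize}
For the nonvanishing, spell out the following. Versality makes $\mu\circ\Sym^d\kappa$ surject onto $\im\mu$, so $\theta^{\circ d}=0$ would force $c=0$, hence $\psi_{H_\cL}=0$ and $\cE(H_\cL)=0$, contradicting $\cE(H_\cL)\simeq\cL|_B\neq0$.

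Your version never uses simplicity of $\bbW$, and it also shows that the Higgs field has a nonzero length-$d$ chain from $H^d$ to $H^0$ at $s$. It does, however, lean a second time on versality and the reconstruction isomorphism. The paper's version applies to any simple complex variation with nonzero extreme Hodge pieces, with no reconstruction input.
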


\begin{proof} By proposition 4.3.13 and remark 4.3.14 in \cite{MHMProject}, the local system $\bbV$ decomposes as a direct sum 
\[ \bbV \; = \; \bigoplus_{i = 1}^r \bbV_i\]
where each $\bbV_i \subset \bbV$ is a simple local subsystem underlying a subvariation of complex Hodge structures. 
By compatibility with the Hodge decomposition, it follows that there is an index $i_0$ such that the local subsystem $\bbW := \bbV_{i_0} \subset \bbV$ satisfies $\bbW_s \cap \bbV^{d,0}_s \neq 0$ for some $i$. Thus \cref{decomposition-of-triples} (2) implies
\[ \bbV^{d, 0}_s \oplus \bbV^{0,d}_s \subset \bbW_s. \]
Since $\bbW \subset \bbV$ underlies a subvariation of complex Hodge structures, this inclusion at $s$ implies the same inclusion at every $t\in S$ and gives
\begin{eqnarray*}
 \dim(\bbW_t) 
 &\ge & \dim(\bbV^{0,d}_t) + \dim(\bbV^{d,0}_t)+\sum_{i=1}^{d-1} \dim \mathbb{W}_t^{i,d-i} 
 \\
 &=& h^0(X,\mathscr{L}\otimes  \omega_X)+h^d(X,\mathscr{L})+\sum_{i=1}^{d-1} \dim \mathbb{W}_t^{i,d-i}.
\end{eqnarray*}
It remains to show that if $h^0(X,\mathscr{L}\otimes  \omega_X)$ and $h^d(X,\mathscr{L})$ are both non-zero, then so are all the summands on the right hand side. This follows from Griffiths transversality: if $\mathbb{W}^{p,d-p}_t = 0$ for some $p\in \{1, \dots, d-1\}$, then
\[
 \cF^p = \cF^{p+1}
 \quad \text{for the Hodge filtration $\cF^\bullet$ on $\cW = \bbW\otimes \cO_S$}.
\]
Then Griffiths transversality implies $\nabla(\cF^p) \subset \cF^p \otimes \Omega^1_S$, which means that $\cF^p \subset \cW$ is a flat subbundle. Since $\bbW$ is a simple local system, it follows that $\cF^p = 0$ or $\cF^p = \cW$. But since by construction
\[
\cF^p_s = \mathbb{V}^{d,0}_s\oplus \bigoplus_{i=1}^{p-1} \mathbb{W}_s^{d-i, i},
\]
we know that $\cF^p \neq 0$ (as $\mathbb{V}^{d,0}_s \neq 0$) and that $\cF^p \neq \cW$ (as $\mathbb{V}^{0,d}_s \neq 0$). This yields the desired contradiction.
\end{proof}

Suppose now that the flat bundle $\cL$ is torsion. By its \emph{order} we mean the least integer $n \ge 1$ such that $\cL^{\otimes n}$ is the trivial flat bundle. Let $K := \bbQ(\zeta_n) \subset \bbC$ be the $n$-th cyclotomic extension and let~$R:= \cO_K$ be its ring of integers. Then $\bbL$ comes by extension of scalars from a local system $\bbL_R$ with coefficients in~$R$. Hence $\bbV$ is obtained from the local system $\bbV_R = \rR^d \pi_\ast \bbL_R$ with coefficients in $R$ by extension of scalars:
\[ \bbV = \bbV_R \otimes_{R} \bbC.\]
In what follows we will be interested in local subsystems of~$\bbV_R$ with coefficients in $\bbZ$. We consider the tensor product
\begin{equation} \label{galois-decomposition}
 \bbV_R \otimes_\bbZ \bbC = \bigoplus_{i \in (\bbZ / n \bbZ)^\times} \bbV_i \quad \textup{where} \quad \bbV_i := \rR^d \pi_\ast \bbL^{\otimes i}.
 \end{equation}
 Needless to say, with this notation we have $\bbV_1 = \bbV$. If the hypotheses of \cref{th:consequences-monodromy} hold for $B$ and the flat line bundle $\cL^{\otimes i}$, we may consider the unique complex sublocal system
\[
\bbW_i \subset \bbV_i
\]
provided by \cref{th:consequences-monodromy}.

\begin{proposition} \label{Cor:summand-finite-monodromy} 
Let $d=2$. Suppose that $B$ contains a divisor $B' \subset X$ such that $\omega^\vee_X(B')$ is ample, and $\rH^0(B, \cO_B) = \bbC$. Let $\cL$ be a torsion line bundle of order $n$ such that for all $i \in (\bbZ / n \bbZ)^\times$ the maps $\eta_{\cL^{\otimes i}}$ are surjective on $B$. Then for any $R$-local subsystem 
\[ \bbV'_R \subset \bbV_R \]
such that $\bbV'_R \otimes_\bbZ \bbC \subset \bbV_R \otimes_\bbZ \bbC$ meets all $\bbW_i$ trivially, the monodromy of $\bbV'_R$ is finite.
\end{proposition}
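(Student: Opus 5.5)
Let $\bbV'_R \subset \bbV_R$ be as in the statement and put $\bbV'_\bbC := \bbV'_R \otimes_\bbZ \bbC \subset \bigoplus_i \bbV_i$. We show that $\bbV'_\bbC$ is a polarizable $\bbZ$-variation of Hodge structures, hence of geometric type, all of whose Hodge pieces sit in bidegree $(1,1)$. Such a variation has finite monodromy by the classical argument: its period map is constant, so the monodromy preserves a positive definite form together with a lattice.

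\textbf{Step 1: sub-VHS structure.} The local system $\bbV_R$, viewed as a $\bbZ$-local system, underlies a polarizable $\bbZ$-VHS of weight $2$. This is $\rR^2\pi_*$ of the pushforward of $\bbL_R$, and it decomposes after $\otimes\bbC$ as in \eqref{galois-decomposition}, each $\bbV_i$ carrying its complex VHS. By Deligne semisimplicity, the $\bbZ$-local subsystem $\bbV'_R$ (more precisely $\bbV'_R\otimes\bbQ$) is a direct summand of $\bbV_R\otimes\bbQ$ as a local system. By the theorem of the fixed part and semisimplicity (\cite{MHMProject}), it underlies a sub-$\bbQ$-VHS. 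Hence $\bbV'_\bbC$ is a sub-VHS of $\bigoplus_i \bbV_i$ with the direct sum Hodge decomposition.

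\textbf{Step 2: vanishing of the $(2,0)$ and $(0,2)$ parts.} Decompose each $\bbV_i$ into simple sub-VHS. By \cref{th:consequences-monodromy}, applied to $\cL^{\otimes i}$ (the hypotheses hold since $\eta_{\cL^{\otimes i}}$ and $\eta_{\cL^{\otimes -i}}$ are surjective on $B$ for all units $i$), all of $\bbV_i^{2,0}\oplus\bbV_i^{0,2}$ lies in the single simple summand $\bbW_i$. Every other simple summand of $\bbV_i$ is therefore pure of type $(1,1)$. Now $\bbV'_\bbC$ is a semisimple sub-VHS of $\bigoplus_i\bbV_i$ meeting each $\bbW_i$ trivially. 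The key point is to deduce that its $(2,0)$ and $(0,2)$ components vanish.

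I would argue as follows. Take a simple constituent $\bbU$ of $\bbV'_\bbC$. If $\bbU$ has a nonzero $(2,0)$-part, then $\bbU$ is not pure $(1,1)$. Its projection to some $\bbV_i$ is then an isomorphism onto a simple summand that is not pure $(1,1)$, and by Step 2 that summand can only be $\bbW_i$. One must rule out the situation where $\bbU$ sits diagonally inside isotypic components, for instance when several $\bbW_i$ are isomorphic to one another. Here one uses that morphisms of local systems between simple VHS are morphisms of VHS up to shift. Consider the isotypic component of $\bigoplus_i\bbV_i$ of type $\bbW_{i}$: its non-$(1,1)$ part is exactly $\bigoplus_{j:\bbW_j\simeq\bbW_i}\bbW_j$. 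So if $\bbU\simeq\bbW_i$ is a sub-VHS whose Hodge numbers match those of $\bbW_i$, then $\bbU$ must lie in that sum.

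I expect this isotypic bookkeeping to be the main obstacle. If the hypothesis ``meets all $\bbW_i$ trivially'' is too weak to exclude diagonal embeddings, I would instead read it as saying that $\bbV'_\bbC$ has no constituent isomorphic to any $\bbW_i$. This reading is what the Galois-stable $\bbZ$-structure naturally gives, since $\bbV'_R$ is $R$-stable and the $\bbW_i$ are permuted by $\Gal(K/\bbQ)$.

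\textbf{Step 3: conclusion.} With $\bbV'^{2,0}=\bbV'^{0,2}=0$, the polarization on the $\bbQ$-VHS $\bbV'_R\otimes\bbQ$, suitably signed, is definite on the whole weight-$2$ variation, because it is of pure type $(1,1)$. The monodromy therefore lies in the intersection of a compact orthogonal group with $\GL(\bbV'_R)$, which is the discrete group of automorphisms of a lattice. A discrete subgroup of a compact group is finite, so the monodromy of $\bbV'_R$ is finite.
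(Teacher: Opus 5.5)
\textbf{Gap in Step 2.} The gap is in Step 2, exactly where you suspect it. Meeting every $\bbW_i$ trivially does not by itself prevent a constituent of $\bbV'_\bbC$ from sitting diagonally across several summands of $\bigoplus_i \bbV_i$. For instance, the graph of an isomorphism $\bbW_i \simeq \bbW_j$ with $i \neq j$ meets both trivially but has nonzero $(2,0)$-part. Your fallback of reading the hypothesis as ``no constituent isomorphic to any $\bbW_i$'' replaces the statement by a stronger one instead of proving it. The remark that $\Gal(K/\bbQ)$ permutes the $\bbW_i$ does not yield that reading either.

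\textbf{Step 1.} This step is also unjustified as stated. A direct summand of a polarizable VHS, as a local system, need not underlie a sub-VHS: in a constant variation every subspace is a sub local system. The theorem of the fixed part only concerns the constant part. You do not need any sub-VHS structure on $\bbV'_\bbC$ anyway. Unitarity as a local system, plus the lattice coming from $\bbV'_R$, is all that Step 3 uses.

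\textbf{The missing idea.} Use that $\bbV'_R$ is an $R$-submodule, not merely a $\bbZ$-submodule. We have $R \otimes_\bbZ \bbC \simeq \prod_{i \in (\bbZ/n\bbZ)^\times} \bbC$, and the corresponding idempotents act on $\bbV_R \otimes_\bbZ \bbC$ as the projections onto the summands of \eqref{galois-decomposition}. Hence the $R \otimes_\bbZ \bbC$-submodule $\bbV'_\bbC$ splits as
\[ \bbV'_\bbC = \bigoplus_{i} \bbV'_i, \qquad \bbV'_i := \bbV'_\bbC \cap \bbV_i. \]
So no diagonal constituents can occur, whatever the relations among the $\bbW_i$.

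Now fix $i$. The orthogonal complement of $\bbW_i$ in $\bbV_i$ is a sub-VHS. It is pure of type $(1,1)$, because $d = 2$ and $\bbW_i$ contains $\bbV_i^{2,0} \oplus \bbV_i^{0,2}$, and hence it is unitary. Since $\bbV'_i \cap \bbW_i = 0$, projection along $\bbW_i$ embeds $\bbV'_i$ into this complement, so $\bbV'_i$ is unitary. Therefore $\bbV'_\bbC$ is unitary, and its monodromy also preserves the lattice coming from $\bbV'_R$. Your Step 3 then gives finiteness. This is precisely the paper's proof, and with it you never need to strengthen the hypothesis.
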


\begin{proof} Since $\bbV'_R$ is an $R$-sublocal system, the complex local system $\bbV'_R \otimes_{\bbZ} \bbC$ admits a decomposition compatible with the one in~\eqref{galois-decomposition}:
\[ \bbV'_R \otimes_\bbZ \bbC = \bigoplus_{i \in (\bbZ / n \bbZ)^\times} \bbV'_i \quad \textup{where} \quad \bbV'_i = (\bbV'_R \otimes_\bbZ \bbC) \cap \bbV_i.\]
By assumption, the local systems $\bbV'_i$ and $\bbW_i$ meet trivially, hence $\bbV'_i$ must be pure of Hodge type~$(1, 1)$ because $d = 2$. In particular, $\bbV'_i$ is unitary. Seeing $\bbV'_R$ just as an integral local system, the local system $\bbV_R'$ is then both integral and unitary, hence it has finite monodromy.
\end{proof}

\begin{corollary} \label{supplement-finite-monodromy} 
Let $d=2$. Suppose that $B$ contains a divisor $B' \subset X$ such that $\omega^\vee_X(B')$ is ample, and $\rH^0(B, \cO_B) = \bbC$. Let $\cL$ be a torsion line bundle of order $n$ such that for all $i \in (\bbZ / n \bbZ)^\times$ the maps $\eta_{\cL^{\otimes i}}$ are surjective on $B$. If every simple sublocal system of $\bbV_i$ different from $\bbW_i$ has rank $< \rk \bbW_i$, then
\[ \bbV = \bbW \oplus \bbW'\]
where the local system $\bbW'$ has finite monodromy.
\end{corollary}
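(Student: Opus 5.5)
Since $\bbV=\bbV_1$ is semisimple, \cref{th:consequences-monodromy} gives the simple summand $\bbW=\bbW_1\subset\bbV$, and we may write $\bbV=\bbW\oplus\bbW'$ with $\bbW'$ a complement underlying a subvariation of Hodge structures. The plan is to show that $\bbW'$ has finite monodromy. The strategy is to produce an $R$-local subsystem $\bbV'_R\subset\bbV_R$ whose complexification meets every $\bbW_i$ trivially and contains $\bbW'$ as a summand (after extension of scalars), and then to apply \cref{Cor:summand-finite-monodromy}.

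The first step is to understand how the Galois group $\Gal(K/\bbQ)\simeq(\bbZ/n\bbZ)^\times$ permutes the summands in \eqref{galois-decomposition}. Write $\bbV_R\otimes_\bbZ\bbC=\bbV_R\otimes_R(R\otimes_\bbZ\bbC)=\bigoplus_{\sigma}\bbV_R\otimes_{R,\sigma}\bbC$, with $\sigma$ ranging over the embeddings $K\into\bbC$. The summand indexed by $\sigma_i\colon\zeta_n\mapsto\zeta_n^i$ is $\bbV_i$. Semisimplicity and isotypic decompositions are preserved by conjugation by $\sigma\in\Aut(\bbC)$ extending $\sigma_i$. Hence the conjugate $\bbV^{\sigma}$ of $\bbV$ is $\bbV_i$, and conjugation sends simple summands to simple summands of the same rank. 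The rank hypothesis says that $\bbW_i$ is the unique simple summand of maximal rank in $\bbV_i$, and that it occurs with multiplicity one. Therefore $\sigma(\bbW)=\bbW_i$.

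The second step uses this to descend $\bbW$ to $K$. The local system $\bbW$ is the unique simple subsystem of $\bbV$ of rank $\rk\bbW$, so it is stable under $\Aut(\bbC/K)$. It is therefore defined over $K$, that is, $\bbW=\bbW_K\otimes_K\bbC$ with $\bbW_K\subset\bbV_K:=\bbV_R\otimes_R K$. The same holds for the complement: the sum of all the other isotypic components is also canonical (it is the sum of all simple subsystems not isomorphic to $\bbW$), so it descends to some $\bbW'_K$. Set $\bbV'_R:=\bbW'_K\cap\bbV_R$, which is an $R$-local subsystem. By the first step, the $\sigma_i$-component of $\bbV'_R\otimes_\bbZ\bbC$ is $\sigma_i(\bbW')$, which is the canonical complement of $\bbW_i$ in $\bbV_i$. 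Hence $\bbV'_R\otimes_\bbZ\bbC$ meets each $\bbW_i$ trivially. \Cref{Cor:summand-finite-monodromy} then gives that $\bbV'_R$ has finite monodromy, and so does $\bbW'=\bbV'_R\otimes_{R}\bbC$.

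The main obstacle is the Galois-compatibility bookkeeping in the second step. One must check two things: that the component of $\bbV'_R\otimes_\bbZ\bbC$ in $\bbV_i$ really is the conjugate complement, and that distinct elements of $(\bbZ/n\bbZ)^\times$ cannot collapse, via $\Aut(\bbC)$, onto the same embedding in a way that would mix $\bbW_i$ into the complement. The multiplicity-one and maximal-rank statement supplied by the hypothesis is exactly what prevents $\bbW_i$ from having isomorphic copies inside the complement.
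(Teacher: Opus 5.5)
Your proposal is correct and follows essentially the same route as the paper: the multiplicity-one, maximal-rank hypothesis makes $\bbW$ and its canonical complement $\bbW'$ stable under Galois conjugation, so both descend. The conjugates of $\bbW$ are exactly the $\bbW_i$, hence the descended complement meets every $\bbW_i$ trivially and \cref{Cor:summand-finite-monodromy} applies. The only cosmetic difference is that you descend along $\operatorname{Aut}(\bbC/K)$ to $K$ and then intersect with $\bbV_R$, which should be read as taking the preimage in case $\bbV_R$ has torsion; the paper instead uses $\Gal(\overline{\bbQ}/K)$ and passes directly to a saturated $R$-sublocal system.
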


\begin{proof} Under the assumptions on the rank of $\bbW$, the simple direct summand $\bbW$ is an isotypic component of~$\bbV$, hence it admits a unique complement $\bbW'$. The point is that in this case $\bbW'$ comes from a $R$-local subsystem of $\bbV_R$ because $\bbW$ does. To argue for this last claim, notice that $\bbL_R^{\otimes i}$ can also be seen as the Galois conjugate $\bbL_R^\sigma$ for the unique $\sigma \in \Gal(K / \bbQ)$ such that $\sigma(\zeta_n) = \zeta_n^i$. As a consequence, 
\[ \bbV_{i, R} := \rR^2 \pi_\ast \bbL_R^{\otimes i} = \bbV_R^\sigma\]
is the Galois conjugate of $\bbV_R$ under $\sigma$. Now, any local complex subsystem of $\bbV_i$ is defined over~$\overline{\bbQ}$, thus it makes sense to consider its Galois conjugates under $\Gal(\overline{\bbQ} / \bbQ)$. Since the simple sublocal systems of $\bbV_i$ different from $\bbW_i$ have smaller rank, the sublocal system $\bbW_i$ coincides with its Galois conjugates under $\Gal(\overline{\bbQ} / K)$, hence it comes by extension of scalars from a unique saturated $R$-sublocal system
\[ \bbW_{i, R} \subset \bbV_{i, R}.\]
The same uniqueness property shows
\[ \bbW_{i, R} = \bbW_{R}^\sigma\]
where $\bbW_R := \bbW_{1, R}$ and $\sigma \in \Gal(K / \bbQ)$ is such that $\sigma(\zeta_n) = \zeta_n^i$. It follows at once that the complement $\bbW' \subset \bbV$ of $\bbW$ comes from local subsystem $\bbW'_R \subset \bbV_R$ with coefficients in $R$. Moreover, seen as a $\bbZ$-local subsystem it is such that $\bbW'_R \otimes_\bbZ \bbC$ meets trivially $\bbW_{i, R}$ for any $i \in (\bbZ / n \bbZ)^\times$. \Cref{Cor:summand-finite-monodromy} implies that $\bbW'$ has finite monodromy, as desired.
\end{proof}

%%%%%%%%%%%%%%%%%%%%%%%%%%%%%%%%%%%
%
% THE FANO SURFACE
%
%%%%%%%%%%%%%%%%%%%%%%%%%%%%%%%%%%%

\section{The surface of lines on a cubic threefold}\label{sec:fano}

\subsection{Main results} \label{sec:StatementsCubic3fold}

To prove the main theorem of this paper, we are going to apply the results of the previous section to the family of Fano surfaces of lines deduced from a versal family of cubic threefolds. In this section we are going to prove that the hypotheses of \cref{th:consequences-monodromy} are fulfilled for these surfaces. More precisely, let $Y \subset \bbP(E)$ be a smooth cubic threefold, where $\dim E  = 5$. Let $F:= F(Y)$ be the surface of lines on $Y$. First of all, we show the following global generation result for algebraically trivial line bundles:

\begin{theorem} \label{thm:global-generation-line-bundle}For $\cL \in \Pic^0(F)$ generic, the line bundle $\cL \otimes \omega_F$ is globally generated. In particular, if $\Pic^0(F)$ is simple, $\cL \otimes \omega_F$ is globally generated for all but finitely many torsion line bundles $\cL$.
\end{theorem}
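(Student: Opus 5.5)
The plan is to exhibit one explicit $\cL\in\Pic^0(F)$ for which $\cL\otimes\omega_F$ is globally generated, and then spread this out by openness. Two standard facts on the Fano surface enter. First, the Albanese map embeds $F$ in the intermediate Jacobian $A:=\Alb(F)$, a principally polarized abelian fivefold, with $F-F=\Theta$. Second, for a line $\ell\subset Y$ let $C_\ell\subset F$ be the curve of lines meeting $\ell$. If a plane cuts $Y$ in three lines $\ell_1,\ell_2,\ell_3$, then $K_F\sim C_{\ell_1}+C_{\ell_2}+C_{\ell_3}$ (Clemens--Griffiths). Finally, $\chi(\cO_F)=1-5+10=6$.

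\emph{Step 1 (openness).} By generic vanishing (Green--Lazarsfeld) applied to $F$, which has maximal Albanese dimension, there is a dense open $U\subset\Pic^0(F)$ on which $h^1(\cL^\vee)=h^2(\cL^\vee)=0$. By Serre duality this gives $h^1(\cL\otimes\omega_F)=h^2(\cL\otimes\omega_F)=0$, hence $h^0(\cL\otimes\omega_F)=\chi(\omega_F)=6$ for $\cL\in U$. Let $\mathcal P$ be a Poincar\'e bundle on $F\times\Pic^0(F)$. Over $U$, the sheaf $p_*(\mathcal P\otimes\omega_F)$ is locally free of rank $6$ and commutes with base change. Global generation of $\cL\otimes\omega_F$ is then the surjectivity of the evaluation map $p^*p_*(\mathcal P\otimes\omega_F)\to\mathcal P\otimes\omega_F$ over $F\times\{\cL\}$. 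Its failure locus is closed and $F$ is proper, so the set of $\cL\in U$ with $\cL\otimes\omega_F$ globally generated is open. It therefore suffices to find a single such $\cL\in U$.

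\emph{Step 2 (explicit sections).} Take $\cL=\cO_F(C_a-C_b)$ for distinct lines $a,b$; this lies in $\Pic^0(F)$ because the classes of the $C_\ell$ are algebraically equivalent. Each plane $\Pi\supset b$ cuts $Y$ in $b\cup\ell_\Pi\cup\ell'_\Pi$, so $K_F\sim C_b+C_{\ell_\Pi}+C_{\ell'_\Pi}$. Hence
\[ C_a+C_{\ell_\Pi}+C_{\ell'_\Pi}\in|\cL\otimes\omega_F| \]
for the one-parameter family of such planes $\Pi$. A point $x\in F$, that is a line $m\subset Y$, is a base point of this subfamily only if either $m$ meets $a$, or $m$ meets $\ell_\Pi\cup\ell'_\Pi$ for every plane $\Pi\supset b$. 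For $m$ disjoint from $b$, the second condition fails: the residual lines $\ell_\Pi\cup\ell'_\Pi$ sweep out a surface (the cone over the curve of lines meeting $b$), and only finitely many planes through $b$ have a residual line meeting $m$.

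The remaining base points lie on $C_a\cup C_b$. To remove them I will swap roles: the same bundle is also $\cO_F(C_{a'}-C_{b'})$ whenever $a-b=a'-b'$ in $A$. Since $F-F=\Theta$ is a divisor, the difference map $F\times F\to\Theta$ has one-dimensional fibers, so each such $\cL$ has a one-parameter family of representations $(a',b')$. I then need that the curves $C_{a'}\cup C_{b'}$ for varying $(a',b')$ have no common point, or that some section coming from a different triangle containing $b'$ avoids each remaining point of $C_a$.

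\emph{Main obstacle.} The main difficulty is reconciling Steps 1 and 2. The bundles $\cO_F(C_a-C_b)$ only sweep out the divisor $\Theta\subset\Pic^0(F)\cong A$, so I must check that $\Theta\not\subset\Pic^0(F)\smallsetminus U$. This amounts to showing that $h^1(\cO_F(C_b-C_a))=0$ for general $a,b$. I would first try to verify this from the cohomology of $\cO_{C_a}$ and $\cO_{C_b}$ using the sequences $0\to\cO_F(-C_a)\to\cO_F\to\cO_{C_a}\to0$. Failing that, I would use the description of $V^1(\cO_F)$ for Fano surfaces, which is small by the Fourier--Mukai analysis of $F\subset A$.

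If this route stalls, the fallback is a dimension count. For fixed $x$, the locus $Z_x=\{\cL\in U: x \text{ is a base point}\}$ is the zero locus of the section of the rank-$6$ bundle $\big(p_*(\mathcal P\otimes\omega_F)\big)^\vee\otimes\mathcal P_x$ given by evaluation at $x$. One would then show that $Z_x$ has codimension at least $3$, so that the incidence variety $\{(x,\cL)\}$ does not dominate $\Pic^0(F)$.

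\emph{Step 3 (torsion points).} Let $Z\subsetneq\Pic^0(F)$ be the closed complement of the good locus. By the Manin--Mumford theorem (Raynaud), the Zariski closure of the torsion points in $Z$ is a finite union of translates of abelian subvarieties by torsion points. If $\Pic^0(F)$ is simple, these subvarieties are points, so $Z$ contains only finitely many torsion points.
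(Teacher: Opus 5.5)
There is a genuine gap in Step~2, which carries all the content: you never exhibit a single $\cL$ for which $\cL\otimes\omega_F$ is globally generated. Every divisor $C_a+C_{\ell_\Pi}+C_{\ell'_\Pi}$ in your family contains $C_a$, so you need other representations $\cL\simeq\cO_F(C_{a'}-C_{b'})$ to clear the base locus along $C_a$. Your claim that these come in a one-parameter family is false. $F\times F$ and $\Theta$ are both $4$-dimensional and the difference map is onto, so it is generically finite (in fact of degree $6$). A general $\cL$ of this form therefore has only finitely many representations $(a_i,b_i)$. You would still have to show that the curves $C_{a_i}$ (or the unions $C_{a_i}\cup C_{b_i}$, as you phrase it) have no common point, and nothing in the proposal addresses this configuration question. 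The fallback, proving that $Z_x$ has codimension at least $3$, only restates the claim.

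The other obstacle you flag is real but removable. The paper proves $h^1(F,\cL)=0$ for \emph{every} nontrivial $\cL\in\Pic^0(F)$: by Collino, $\pi_1(F)$ is a central extension of $\bbZ^{10}$ by $\bbZ/2\bbZ$, so $\rH^1(\pi_1(F),\chi)=0$ for $\chi\neq 1$, and Hodge theory then gives the vanishing. Hence your $U$ is all of $\Pic^0(F)\smallsetminus\{0\}$. There is also an index slip in Step~1: $h^2(\cL^\vee)=h^0(\cL\otimes\omega_F)=6$, so what you need is $h^0(\cL^\vee)=h^1(\cL^\vee)=0$.

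The idea behind the paper's proof is to avoid a specific nontrivial $\cL$ altogether and degenerate to $\cO_F$. Take a curve in $\Pic^0(F)$ through $0$ with tangent vector $v\in\rH^1(F,\cO_F)$. The pushforward of the Poincar\'e bundle twisted by $\omega_F$ is locally free of rank $6$, and its fiber at $0$ is a $6$-dimensional subspace $L_v\subset\rH^0(F,\omega_F)$. First-order deformation theory puts $L_v$ inside the kernel of $\alpha\mapsto\alpha\cup v$, $\rH^0(F,\omega_F)\to\rH^1(F,\omega_F)$. Under $\rH^0(F,\omega_F)\simeq\bigwedge^2\rH^1(F,\cO_F)^\vee$ this kernel is the $6$-dimensional space of $2$-forms having $v$ in their kernel, so the two spaces coincide. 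By the tangent bundle theorem, $\bbP(T_xF)\subset\bbP(\rH^1(F,\cO_F))\simeq\bbP(E)$ is the line $\ell_x$. Hence $x$ is a base point of $L_v$ if and only if $[v]\in\ell_x$. For $[v]\notin Y$ the system $L_v$ is base point free, and properness of $F$ spreads surjectivity of the evaluation map to nearby $\cL$. Your Step~3 (Raynaud) agrees with the paper.
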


The second result concerns the rank of the linear map $\mu \colon \Sym^2 \rH^1(F, T_F) \to \rH^2(F, \textstyle \bigwedge^2 T_F)$
induced by the cup product. The geometric meaning of this rank is better understood by considering the dual map
\[ \mu^\vee \colon \rH^0(F, \omega_F^{\otimes 2}) \to \Sym^2 \rH^1(F, T_F)^\vee\]
where we use the identification $\rH^2(F, \textstyle \bigwedge^2 T_F)^\vee \iso \rH^0(F, \omega_F^{\otimes 2})$ given by Serre duality. By definition, the surface $F$ embeds into the Grassmannian of projective lines in $\bbP(E)$,
\[ F \into G:= \Gr_2(E).\]
The polarization $\cO_G(1)$ on $G$ given by the Pl\"ucker embedding restricts to the canonical bundle of $F$, and the restriction map
\[ \rH^0(G, \cO_G(2)) \into \rH^0(F, \omega_F^{\otimes 2})\]
is injective. However it is not an isomorphism as cor. 1.8 and prop. 1.15 in \cite{AltmanKleiman} yield
\[ h^0(G, \cO_G(2)) = 50 \qquad \text{and} \qquad h^0(F, \omega_F^{\otimes 2}) = 51. \]
Recall that a line $L \subset Y$ is of \emph{second type} if its normal bundle is
$ \cN_{L / Y} \simeq \cO_L(1) \oplus \cO_L(-1)$. Lines of second type form a divisor in $F$ cut out by a bicanonical section \[\sigma \in \rH^0(F, \omega_F^{\otimes 2}).\]
With this notation, we have:

\begin{theorem} \label{thm:rank-of-cup-product} We have $\mu^\vee(\sigma) = 0$ and, for $Y$ generic, the composite map
\[\rH^0(G, \cO_G(2)) \into \rH^0(F, \omega_F^{\otimes 2}) \stackrel{\mu^\vee}{\too} \Sym^2 \rH^1(F, T_F)^\vee\]
is injective. In particular, for $Y$ generic, $\rk \mu = 50$, and the base locus $B$ of $\ker(\mu^\vee) \subset \rH^0(F, \omega_F^{\otimes 2})$ is the divisor of lines of second type on $Y$.
\end{theorem}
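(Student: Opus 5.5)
We will make $\mu^\vee$ explicit in terms of the geometry of $Y$ and reduce both assertions to statements about quadrics and the Jacobian ring. The first input is the standard description of the deformation space. Since $h^1(F,T_F)=10=h^1(Y,T_Y)$ and deformations of $F$ are induced by those of $Y$, we identify $\rH^1(F,T_F)\iso \rH^1(Y,T_Y)\iso R^3_f$. Here $R_f=\bbC[x_0,\dots,x_4]/J_f$ is the Jacobian ring of the cubic $f$. Via the tangent bundle sequence on $G$ restricted to $F$,
\[0\to T_F\to T_G|_F\to \cN_{F/G}\to 0,\qquad \cN_{F/G}\iso \Sym^3 Q|_F,\]
where $Q$ is the tautological rank-$2$ quotient (so that $\rH^0(G,\Sym^3 Q)=\Sym^3 E^\vee$), a class in $\rH^1(F,T_F)$ comes from a cubic $g\in \Sym^3E^\vee$ restricted to lines in $F$. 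Dually, $\mu^\vee(\tau)$ for $\tau\in \rH^0(F,\omega_F^{\otimes 2})$ is the quadratic form $(g,h)\mapsto \langle \tau, \kappa(g)\cup\kappa(h)\rangle$. We plan to compute this pairing through the two-step connecting map $\rH^0(\cN_{F/G})^{\otimes 2}\to \rH^2(\bigwedge^2T_F)$ and then pair with $\tau$, expressing everything as a residue on $F$.

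For the vanishing $\mu^\vee(\sigma)=0$, we aim to exploit the fact that at a line $L$ of second type the map $\rH^0(\cN_{L/\bbP^4})\to \rH^0(\cN_{L/Y}^{\ldots})$ degenerates, so that $\sigma$ is precisely the degeneracy locus of the map $T_G|_F\to \cN_{F/G}$ composed with the projection onto a suitable quotient. The bicanonical section $\sigma$ is then the determinant of $\Sym^2$ of a $2\times 2$ matrix of sections. Pairing with $\kappa(g)\cup\kappa(h)$ should produce an exact term, since the cup product factors through $\rH^2$ of a sheaf which, twisted by $\sigma$, acquires a resolution with vanishing cohomology.

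An alternative and possibly cleaner route is via the Hodge-theoretic dictionary. The map $\mu$ is, up to the isomorphism $\rH^2(F,\bigwedge^2T_F)$ vs. $\rH^2(F,\cO_F)\otimes\ldots$, the second derivative of the period map of $\rH^2(F)$. Because $\rH^2(F)=\bigwedge^2\rH^1(F)$ and $\rH^1(F)\iso \rH^3(Y)$, this second derivative factors through the IVHS of $\rH^3(Y)$, which is given by multiplication in $R_f$: $R^3\times R^3\to R^6$ composed with $R^1\otimes R^1$, etc. The image of $\mu$ thus has dimension at most that of the relevant piece, and $\dim\Sym^2R^1=15$ leads to quadrics; the annihilator should be one-dimensional. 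We then identify the annihilated bicanonical form as $\sigma$, for instance by its $\mathrm{Aut}$-equivariance and the fact that $\sigma$ is the unique bicanonical section not coming from $\rH^0(G,\cO_G(2))$ up to that subspace (by $h^0=50$ vs. $51$).

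For the injectivity of $\rH^0(G,\cO_G(2))\to \Sym^2\rH^1(F,T_F)^\vee$ for generic $Y$, we will first compute the composite explicitly as a bilinear pairing on $R^3_f$ with values determined by $q\in \rH^0(G,\cO_G(2))$. We then prove injectivity either at a special smooth cubic, such as the Fermat cubic, where $R_f$ is monomial and the computation becomes combinatorial, or via a Macaulay-duality argument. Injectivity being an open condition, a single example suffices. Once this is established, $\rk\mu=50$ follows, $\ker\mu^\vee=\bbC\sigma$, and hence $B=\operatorname{div}(\sigma)$. The main obstacle is making the explicit formula for $\mu^\vee$ on $\rH^0(G,\cO_G(2))$ concrete enough to check injectivity on the $50$-dimensional space; for this we would try the Fermat cubic with its large symmetry group, decomposing into characters.
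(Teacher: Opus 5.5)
\textbf{There is a genuine gap in the proof of $\mu^\vee(\sigma)=0$. The injectivity half of your plan follows essentially the same route as the paper.}

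\textbf{Why the Hodge-theoretic route fails.} The infinitesimal variation of Hodge structure of $\rH^3(Y)$, equivalently of $\rH^2(F,\bbC)=\bigwedge^2\rH^1(F,\bbC)$, only sees the composite $c\circ\mu$. Here $c\colon \rH^2(F,\bigwedge^2T_F)\to\Hom(\rH^0(F,\omega_F),\rH^2(F,\cO_F))$ is the cup product. The dual of $c$ is the multiplication map $\rH^0(F,\omega_F)^{\otimes 2}\to\rH^0(F,\omega_F^{\otimes 2})$, whose image is the $50$-dimensional subspace $\rH^0(G,\cO_G(2))$. So $c$ has a one-dimensional kernel. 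Consequently $c\circ\mu$ only determines $\mu^\vee$ on $\rH^0(G,\cO_G(2))$ and carries no information about $\mu^\vee(\sigma)$. Your sentence ``the image of $\mu$ thus has dimension at most that of the relevant piece'' conflates $\mu$ with $c\circ\mu$.

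\textbf{Why the identification with $\sigma$ fails.} You also cannot identify $\ker\mu^\vee$ with $\bbC\sigma$ afterwards. Knowing that $\ker\mu^\vee$ is a line complementary to $\rH^0(G,\cO_G(2))$ does not single out $\sigma$: every $\sigma+q$ with $q\in\rH^0(G,\cO_G(2))$ is equally complementary. A symmetry argument at the Fermat cubic would only treat that one cubic. Since $\mu^\vee(\sigma)=0$ is a closed condition in moduli, it would not propagate to generic $Y$, let alone to every smooth $Y$ as the statement asserts.

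\textbf{Your first sketch.} It gestures at the right kind of mechanism, namely a cup product landing in a vanishing $\rH^2$. But it never names the sheaf, the resolution, or the vanishing, so it is not yet an argument.

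\textbf{The missing idea.} Transport the problem to $Y$ through the universal line $p\colon P\to F$, $q\colon P\to Y$.
\begin{itemize}
\item On $P$ one has $p^\ast\sigma=\det(\rd q)$ for $\rd q\colon T_P\to q^\ast T_Y$.
\item Classes $u,v\in\rH^1(F,T_F)\iso\rH^1(Y,T_Y)$ lift to $u_P,v_P\in\rH^1(P,T_P)$ with $\rd p(u_P)=p^\ast u$ and $\rd q(u_P)=q^\ast u_Y$.
\item For a map $f\colon\cV\to\cW$ of rank-$3$ bundles one has $\det(f)\otimes\id=(\id\otimes f^\vee)\circ\bigwedge^2 f$ on $\bigwedge^2\cV$.
\end{itemize}
Hence $(p^\ast\sigma)(u_P\cup v_P)=(\rd q)^\vee\bigl(q^\ast(u_Y\cup v_Y)\bigr)$. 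This vanishes because $\rH^2(Y,\bigwedge^2T_Y)=\rH^2(Y,\Omega^1_Y(2))=0$, which follows from Bott vanishing on $\bbP^4$ together with the conormal and restriction sequences. Since $p^\ast$ is injective on $\rH^2(F,\omega_F)$ (as $R^1p_\ast\cO_P=0$), this gives $\sigma(u\cup v)=0$ for every smooth $Y$.

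\textbf{The injectivity half.} Your outline matches the paper once you make one step explicit: injectivity on $\rH^0(G,\cO_G(2))$ is equivalent to $\rk(c\circ\mu)=50$. In Jacobian-ring terms, this is the rank of $\Sym^2R_3\to\Hom(\bigwedge^2R_1,\bigwedge^2R_4)$, $f\cdot g\mapsto[\phi\wedge\psi\mapsto f\phi\wedge g\psi+g\phi\wedge f\psi]$. At the Fermat cubic the $\mu_3^5$-eigenspace decomposition shows the kernel is $5$-dimensional. It is spanned by $x_0x_1x_2\cdot x_0x_3x_4+x_0x_1x_3\cdot x_0x_2x_4+x_0x_1x_4\cdot x_0x_2x_3$ and its images under permutations of the coordinates. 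Semicontinuity then finishes the argument. That computation is still missing from your proposal, but the route is sound.
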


As usual, in the above statement, the generic quantifier means that the statement holds for $Y$ in a suitable nonempty open subset of $\bbP(\Sym^3 E^\vee)$. Also, $B$ is a bicanonical divisor, hence ample. 

The rest of this section is devoted to the proof of these two results.

\subsection{Proof of \cref{thm:global-generation-line-bundle}}  We begin with the following precise form of generic vanishing for $F$:

\begin{lemma}\label{lem:line-bundles-coh}
For any nontrivial $\cL\in \Pic^0(F)$ we have 
\[h^0(F, \cL)=h^1(F, \cL)=0 \quad \text{and} \quad h^2(F, \cL)=6.\]
\end{lemma}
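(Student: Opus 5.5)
Our plan is to combine the Euler characteristic of $F$ with a vanishing statement for $h^0$ and $h^1$ coming from the Albanese geometry of $F$.

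First, since $\cL$ is algebraically trivial, $\chi(F,\cL)=\chi(F,\cO_F)$. The classical invariants of the Fano surface (Clemens--Griffiths, Altman--Kleiman) are $q=h^1(\cO_F)=5$ and $p_g=h^2(\cO_F)=10$, so
\[ \chi(F,\cO_F)=1-5+10=6. \]
Hence it suffices to prove $h^0(F,\cL)=h^1(F,\cL)=0$ for every nontrivial $\cL\in\Pic^0(F)$; the equality $h^2(F,\cL)=6$ then follows.

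The vanishing of $h^0$ is immediate: a nonzero section of an algebraically trivial line bundle has empty divisor (its class is numerically trivial), so it trivializes $\cL$, contradicting $\cL\not\simeq\cO_F$.

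The real content is $h^1(F,\cL)=0$. By Serre duality this is $h^1(F,\cL^\vee\otimes\omega_F)$, so it is a statement about the cohomological support loci $V^1(\omega_F)=\{\cL : h^1(\omega_F\otimes\cL)\neq 0\}$ in the sense of Green--Lazarsfeld. We will use the following facts.
\begin{enumerate}
\item The Albanese map $a\colon F\to A=\Alb(F)$ is an embedding, and $A$ is the intermediate Jacobian of $Y$, a simple principally polarized abelian fivefold. Simplicity holds for all smooth cubic threefolds, not just generic ones; in any case we must cover every smooth $Y$.
\item Green--Lazarsfeld and Simpson show that every irreducible component of $V^1$ is a torsion translate of a subtorus. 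A positive-dimensional component would yield a fibration of $F$ onto a curve of genus at least $2$, or, via the subtorus, a nontrivial abelian subvariety of $A$.
\item $F$ admits no irrational pencil. This follows because $\rH^2(F)\cong\bigwedge^2\rH^1(F)$ (Clemens--Griffiths), so the cup product $\bigwedge^2\rH^1\to \rH^2$ is injective on decomposables, which is incompatible with a pencil by the Castelnuovo--de Franchis lemma.
\end{enumerate}
Together these show that $V^1$ consists of finitely many torsion points.

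To exclude nontrivial isolated points, I would use the generic vanishing package for a subvariety $F\subset A$. Write $\Phi$ for the Fourier--Mukai transform. The condition that $\omega_F\otimes\cL$ has $h^1\neq 0$ at an isolated nontrivial point produces a nonzero skyscraper summand in the relevant transform. Alternatively, one can invoke the known fact that $a_\ast\omega_F$ (or $\cO_F$) is a GV/M-regular sheaf on $A$ with transform concentrated in one degree, using that $F$ is the theta-dual picture: $F-F=\Theta$ and $[F]=\theta^3/3!$. The main obstacle is this exclusion of isolated nontrivial torsion points of $V^1$.

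What I would try first is a direct computation. Pull back $\cL$ along the difference map $F\times F\to\Theta\subset A$. Use $h^i(\Theta,\cL|_\Theta)=0$ for $i<4$ and nontrivial $\cL$, which holds since $\Theta$ is ample and $h^\ast(A,\cL)=0$. Then descend to $F$ via the Künneth formula: the Künneth decomposition forces $h^1(F,\cL)\cdot h^0(F,\cL^{-1})$-type terms to appear in $\rH^1(F\times F,\ldots)$. Failing that, I would cite the well-known precise form of generic vanishing for the Fano surface: $\cO_F$ has the property that $\Phi(\cO_F)$ is supported at the origin in positive degrees.
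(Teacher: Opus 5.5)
\textbf{There is a genuine gap: you never prove $h^1(F,\cL)=0$.} Your reduction is correct and matches the paper: $\chi(F,\cL)=\chi(F,\cO_F)=1-5+10=6$, and $h^0(F,\cL)=0$. So everything hinges on $h^1(F,\cL)=0$. You yourself call the exclusion of isolated points of $V^1$ the main obstacle and leave it open. None of your suggested tools closes it:
\begin{itemize}
\item GV or M-regularity of $a_\ast\omega_F$ only bounds the codimension of $V^i(\omega_F)$. That is compatible with isolated points of $V^1$.
\item The difference-map computation does not work. The map $d\colon F\times F\to\Theta$ has degree $6$ and contracts the diagonal. Hence vanishing of $\rH^i(\Theta,\cL|_\Theta)$ only controls the summand $\cO_\Theta$ of $d_\ast\cO_{F\times F}$ and says nothing about its rank-$5$ complement. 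Moreover, the K\"unneth terms in $\rH^1(F\times F,\cL\boxtimes\cL^{-1})$ vanish trivially because $h^0=0$; the relevant term $\rH^1(F,\cL)\otimes\rH^1(F,\cL^{-1})$ lies in $\rH^2$.
\item Citing ``the precise form of generic vanishing for $F$'' is citing the statement itself.
\end{itemize}

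\textbf{Your fact (1) is false.} $\Alb(F)\simeq J(Y)$ is simple only for very general $Y$; the paper invokes simplicity only in that case. For the Fermat cubic, $J(Y)$ is isogenous to a power of an elliptic curve. So for special $Y$ you have not even justified reducing $V^1$ to finitely many points. Components coming from maps to elliptic or orbifold curves are not excluded by Castelnuovo--de Franchis.

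\textbf{The missing idea is topological.} It handles every nontrivial $\cL$ and every smooth $Y$ at once.
\begin{itemize}
\item Since $c_1(\cL)=0$, $\cL$ carries a unitary flat connection; let $\mathbb{L}$ be its local system of flat sections.
\item The Hodge decomposition $\rH^1(F,\mathbb{L})\simeq\rH^1(F,\cL)\oplus\rH^0(F,\cL\otimes\Omega^1_F)$ reduces the claim to $\rH^1(\pi_1(F),\chi)=0$ for every nontrivial character $\chi$.
\item By Collino, $\pi_1(F)$ is a nontrivial central extension of $\bbZ^{10}$ by $\bbZ/2\bbZ$. The Hochschild--Serre spectral sequence then gives the exact sequence
\[ 0\to\rH^1(\bbZ^{10},\chi)\to\rH^1(\pi_1(F),\chi)\to\rH^1(\bbZ/2\bbZ,\bbC). \]
\item The left term vanishes because $\chi$ is a nontrivial character of a free abelian group. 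The right term vanishes because $\bbZ/2\bbZ$ is finite.
\end{itemize}
Hence $\rH^1(F,\mathbb{L})=0$, and in particular $h^1(F,\cL)=0$.
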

\begin{proof}
Since $\rH^0(F,\cL)=0$ for any non-trivial line bundle $\cL$ with vanishing first Chern class and since
$\chi(F, \cL)=\chi(F, \mathscr{O}_F)=6$, we only need to see \[\rH^1(F, \cL)=0.\] For this we will use Hodge theory. Since $\cL$ has vanishing first Chern class, it admits a (unique) unitary flat connection $\nabla \colon \cL\to \cL\otimes \Omega^1_F$. Let $\mathbb{L} = \ker \nabla$ be the local system of its flat sections. Because of the Hodge decomposition
\[\rH^1(F, \mathbb{L})=\rH^1(F, \cL)\oplus \rH^0(F, \cL\otimes \Omega^1_F)\] it suffices to show $\rH^1(F, \mathbb{L})=0$ for every non-trivial rank one local system $\mathbb{L}$ on $F$. In other words, we want to show
$$\rH^1(\pi_1(F), \chi)=0$$
for the character $\chi\colon  \pi_1(F)\to \mathbb{C}^\times$ corresponding to $\mathbb{L}$. Collino \cite{collino12, collino2012remarks} showed that the topological fundamental group $\pi_1(F)$ is a nontrivial central extension of $\bbZ^{10}$ by $\bbZ/2\bbZ$. By the Hochschild--Serre spectral sequence, this gives us an exact sequence $$0\to \rH^1(\bbZ^{10}, \chi)\to \rH^1(\pi_1(F), \chi)\to \rH^1(\bbZ/2, \mathbb{C}).$$ The left term in this sequence vanishes because $\chi$ is non-trivial, and the right vanishes because $\mathbb{Z}/2\mathbb{Z}$ is torsion. Hence the same is true for the central term.
\end{proof}

We now prove the global generation result in \cref{thm:global-generation-line-bundle}. Let $S \subset \Pic^0(F)$ be a smooth curve passing through $0$ with tangent direction
\[ v \in T_0 \Pic^0(F) \simeq \rH^1(F, \cO_F).\]
Let $\cL$ be the restriction of the Poincar\'e bundle to $F \times S$ and let
\[ \pr_F \colon F \times S \to F, \qquad \pr_S \colon F \times S \to S,\]
be the two projections. The coherent sheaf
\[ \cE := \pr_{S \ast} (\cL \otimes \pr_F^\ast \omega_F)\]
on $S$ is torsion free, thus locally free because $S$ is a smooth curve. Moreover, by the preceding lemma and the base change theorem for coherent cohomology, the vector bundle $\cE$ has rank $6$ and, for any $s \in S\smallsetminus \{0\}$, it has  fiber $\rH^0(F, \cL_s \otimes \omega_F)$ where $\cL_s$ is the restriction of $\cL$ to the fiber of $\pr_S$ at~$s$. The locus in $F\times S$ where the evaluation morphism 
\[ \pr_S^\ast \cE = \pr_S^\ast \pr_{S \ast} (\cL \otimes \pr_F^\ast \omega_F) \to \cL \otimes \pr_F^\ast \omega_F\]
is surjective is open, and its complement has a closed image $T \subset S$ via $\pr_S$ by properness of $F$. Therefore it suffices to prove that $0 \not\in T$. To do  this, we are going to prove that the tangent vector $v$ can be chosen so that the subspace
\[ L_v:= \cE_0 \into \rH^0(F, \omega_F)\]
defines a base-point free linear system. We begin by giving an alternative description of $L_v$. To state it, recall that the cup product map $\rH^1(F, \cO_F) \otimes \rH^1(F, \cO_F) \to \rH^2(F, \cO_F)$ is skew-symmetric and the induced linear map
\begin{equation} \label{H2-is-wedge-square} \textstyle \bigwedge^2 \rH^1(F, \cO_F) \stackrel{\sim}{\too} \rH^2(F, \cO_F) \end{equation}
is an isomorphism by~\cite[chapter 5, lemma 2.5]{huybrechts2023geometry}.

\begin{lemma} \label{lemma:kernel-contraction} 
Via the identification $\rH^0(F, \omega_F) \simeq \bigwedge^2 \rH^1(F, \cO_F)^\vee$ given by Serre duality and \eqref{H2-is-wedge-square}, we have
\[ L_v = \{ \textup{skew-symmetric bilinear forms $\phi$ on $\rH^1(F, \cO_F)$ with $v \in \ker \phi$} \}. \]
\end{lemma}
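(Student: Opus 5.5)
Our approach is to identify the fiber $\cE_0$ inside $\rH^0(F,\omega_F)$ through the standard first-order deformation argument for cohomology along the family $\cL$ on $F \times S$. Write $\cM := \cL \otimes \pr_F^\ast \omega_F$. Since $\cE$ is locally free of rank $6 = h^0(F,\omega_F)$ (note $q=5$, $p_g = 10$ is the count for $\rH^0(F,\omega_F)$, but $\cE$ has rank $6$ by \cref{lem:line-bundles-coh} and Serre duality), the fiber $\cE_0$ maps injectively into $\rH^0(F, \omega_F)$ as the subspace of sections that extend to sections of $\cM$ over the formal (equivalently first-order) neighbourhood of $0$. This uses that $\cE$ is torsion-free and that its formation commutes with base change only generically, so the image of $\cE_0 \to \rH^0(F,\omega_F)$ is the saturation-limit of the generic fibers. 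The first key step is then to show that this image is contained in the kernel of the obstruction map
\[ \rH^0(F, \omega_F) \too \rH^1(F, \omega_F), \qquad \alpha \longmapsto \alpha \cup v, \]
given by cup product with $v \in \rH^1(F,\cO_F)$, which is the obstruction to extending a section to first order along the tangent direction $v$. Because both sides have dimension $6$, the inclusion is then an equality once we compute that this kernel has dimension $6$.

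The second step is to translate the kernel of $\cup v$ into linear algebra via Serre duality. The map $\alpha \mapsto \alpha\cup v$ is dual to $\cup v\colon \rH^1(F,\cO_F) \to \rH^2(F,\cO_F)$. Thus $\alpha \cup v = 0$ if and only if $\langle \alpha, w \cup v\rangle = 0$ for all $w \in \rH^1(F,\cO_F)$. Under the identification $\rH^0(F,\omega_F) \simeq (\bigwedge^2 \rH^1(F,\cO_F))^\vee$ coming from \eqref{H2-is-wedge-square}, the form $\phi$ attached to $\alpha$ satisfies $\phi(w, v) = \langle \alpha, w\cup v\rangle$. So the kernel is exactly the set of skew forms with $v \in \ker \phi$. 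These are the skew forms on the $4$-dimensional quotient $\rH^1(F,\cO_F)/\bbC v$ of the $5$-dimensional space $\rH^1(F,\cO_F)$, a space of dimension $\binom{4}{2} = 6$. This matches $\rk \cE = 6$ and gives the claimed equality.

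The main obstacle is making the first step rigorous: showing that the limit fiber $\cE_0$ lands inside $\ker(\cup v)$. I would argue this via the theorem on cohomology and base change over $\operatorname{Spec} \cO_{S,0}$. Take a local generator $t$ of $\cO_{S,0}$ and a section $\tilde\alpha$ of $\cM$ near $F\times\{0\}$ restricting to $\alpha$. Then $\tilde\alpha \bmod t^2$ is a first-order extension of $\alpha$. The connecting map of $0 \to \cM_0 \xrightarrow{t} \cM/t^2 \to \cM_0 \to 0$ is cup product with the Kodaira--Spencer class of the family $\cL$, which is $v$. Hence $\alpha\cup v = 0$. Injectivity of $\cE_0 \to \rH^0(F,\omega_F)$ follows because $\cE$ is a saturated pushforward: if an element of $\cE_0$ restricted to zero, it would be divisible by $t$, a contradiction. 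The dimension count then closes the argument.
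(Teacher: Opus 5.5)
Your proposal is correct and follows the paper's proof: you embed $L_v$ into $\ker(\alpha \mapsto \alpha \cup v)$ by first-order deformation theory (the paper cites Sernesi, you sketch it via the connecting map of $0 \to \cM_0 \to \cM/t^2 \to \cM_0 \to 0$), translate this kernel by Serre duality into skew forms with $v$ in their kernel, and conclude from the dimension count $\binom{4}{2} = 6 = \rk \cE$. You also justify the injectivity of $\cE_0 \to \rH^0(F,\omega_F)$, which the paper takes for granted; your asides ``$6 = h^0(F,\omega_F)$'' (in fact $h^0(F,\omega_F) = 10$) and ``formal (equivalently first-order)'' are inaccurate as stated but play no role, since only the inclusion into the first-order kernel is used.
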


\begin{proof}[{Proof of \cref{lemma:kernel-contraction}}] Standard arguments in deformation theory \cite[Proposition 3.3.4]{sernesi2006deformations} imply that~$L_v$ lies in the kernel of the linear map $\rH^0(F, \omega_F) \to \rH^1(F, \omega_F)$ given by cup product with $v$:
\[ L_v \; \subset \; L'_v := \ker(\rH^0(F, \omega_F) \to \rH^1(F, \omega_F) : \alpha \mapsto \alpha \cup v).\]
On the other hand, applying Serre duality and taking the adjoint map of the cup product map
\[ \rH^0(F, \omega_F) \otimes \rH^1(F, \cO_F) \to \rH^1(F, \omega_F)\]
one obtains the cup product map $\rH^1(F, \cO_F) \otimes \rH^1(F, \cO_F) \to \rH^2(F, \cO_F)$. Via the isomorphism \eqref{H2-is-wedge-square} the kernel $L'_v$ can thus be seen as the subspace of skew-symmetric bilinear forms $\phi$ on $\rH^1(F, \cO_F)$ such that $\phi(v, -)$ vanishes identically. These are in linear bijection with skew-symmetric bilinear forms on $\rH^1(F, \cO_F) / \bbC v$, thus 
\[ \dim L'_v = \textstyle \binom{5 - 1}{2} =  6.\]
On the other hand, the subspace $L_v$ has dimension $6$ because it is the fiber at $0$ of the rank $6$ vector bundle $\cE$. Therefore $L_v = L'_v$, which concludes the proof.
\end{proof}

To analyze the base locus of $L_v$, let $A:=\Pic^0(F)$.
The cubic hypersurface $Y \subset \bbP^4$ can be reconstructed from $F$ as the image of the map
\[ \bbP(T_F) \into \bbP(T_{A}) = \bbP(\Lie A) \times A \to \bbP(\Lie A) = \bbP(\rH^1(F, \cO_F))\] (see \cite[Chapter 5, proof of theorem 4.3]{huybrechts2023geometry})
after taking an isomorphism $\bbP(\rH^1(F, \cO_F)) \simeq \bbP(E)$ as follows. 
By \cite[Chapter 2, Corollary 4.20(ii)]{huybrechts2023geometry} there is a canonical isomorphism
\[ \rH^1(F, \cO_F) \simeq \rH^{1,2}(Y) = \rH^2(Y, \Omega^1_Y).\]
On the other hand, $\rH^{1,2}(Y)$ is the degree $4$ component of the Jacobian ring $R$ of $Y$ and the natural pairing $R_1 \times R_4 \to R_5$ is perfect and gives an isomorphism $R_4 \simeq R_1^\vee \otimes R_5$. Now $R_1 = E^\vee$ and $R_5$ is of dimensional one by smoothness of $Y$, so the isomorphism
\begin{equation} \label{eq:choice-coordinates}\rH^1(F, \cO_F) \iso E \otimes R_5\end{equation}
gives the wanted isomorphism $\bbP(\rH^1(F, \cO_F)) \iso \bbP(E)$, through which we see $[v] \in \bbP(\rH^1(F, \cO_F))$ as a point of $\bbP(E)$.

\begin{lemma} A point $x \in F$ is a base point of the linear system $L_v \subset \rH^0(F, \omega_F)$ if and only if the corresponding line $\ell_x \subset Y$ contains $[v] \in \bbP(E)$.
\end{lemma}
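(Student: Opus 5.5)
We use the description of $L_v$ from \cref{lemma:kernel-contraction} together with the tangent bundle theorem. Write $V := \rH^1(F,\cO_F) \simeq T_0 A$, with $A = \Pic^0(F)$ (or $\Alb(F)$, which has the same tangent space). For $x \in F$ the cotangent map of the Albanese map identifies the fiber $T_xF$ with a $2$-dimensional subspace $W_x \subset V$. Through \eqref{eq:choice-coordinates}, $\bbP(W_x) \subset \bbP(E)$ is exactly the line $\ell_x$; this is the content of the reconstruction of $Y$ as the image of $\bbP(T_F) \to \bbP(\Lie A)$ recalled above. So the statement reduces to a linear algebra claim:
\[ x \text{ is a base point of } L_v \iff v \in W_x. \]

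First I identify evaluation at $x$ on $\rH^0(F,\omega_F) \simeq \bigwedge^2 V^\vee$. The canonical bundle is $\omega_F = \bigwedge^2 \Omega^1_F$. Holomorphic $1$-forms on $F$ are pulled back from $A$, and $\rH^0(F,\Omega^1_F) \simeq V^\vee$ up to the conjugation and Serre duality conventions. The isomorphism $\bigwedge^2 \rH^0(\Omega^1_F) \to \rH^0(\omega_F)$ is the wedge product, the dual form of \eqref{H2-is-wedge-square}. Under it, evaluation at $x$ is restriction of a $2$-form $\phi \in \bigwedge^2 V^\vee$ to $\bigwedge^2 W_x$, which is a line, with values in $\omega_{F,x}$. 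The main obstacle is checking that the Serre duality identification used in \cref{lemma:kernel-contraction} agrees with this wedge-product description. I would try to settle it by noting that both isomorphisms are canonical and $\GL$-equivariant in the natural sense, or by directly comparing Dolbeault representatives: $\bar\alpha \wedge \bar\beta$ is paired against $\alpha' \wedge \beta'$ via $\int_F$. Any ambiguity amounts to replacing $V$ by $V^\vee$ consistently, which does not affect the kernel condition.

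Assuming this, $x$ is a base point iff every $\phi \in \bigwedge^2 V^\vee$ with $v \in \ker\phi$ satisfies $\phi|_{W_x} = 0$. Such $\phi$ are exactly the $2$-forms pulled back from $V/\bbC v$.

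If $v \in W_x$, then $W_x$ has image of dimension at most $1$ in $V/\bbC v$, so every such $\phi$ vanishes on $W_x$. Hence $x$ is a base point.

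If $v \notin W_x$, then $W_x$ maps isomorphically onto a $2$-plane in $V/\bbC v$. There exists a $2$-form on $V/\bbC v$ that is nonzero on that plane, so some $\phi \in L_v$ does not vanish at $x$, and $x$ is not a base point.

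Finally, $v \in W_x$ is equivalent to $[v] \in \bbP(W_x) = \ell_x$, which concludes.
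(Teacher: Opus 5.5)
Your proposal is correct and is essentially the paper's argument: $\ell_x=\bbP(T_xF)$ by the tangent bundle theorem, evaluation at $x$ is restriction of a skew form on $\rH^1(F,\cO_F)$ to $T_xF$, and the description of $L_v$ in \cref{lemma:kernel-contraction} reduces everything to the linear algebra you carry out in both directions, which the paper leaves implicit. The compatibility between the Serre-duality identification and evaluation at $x$, which you flag as the main obstacle, is also just asserted in the paper; your equivariance idea does settle it (Schur's lemma on $\bigwedge^2\rH^1(F,\cO_F)$, using that the class of $F$ in $\Alb(F)$ is $\theta^3/3!$), whereas your closing remark about swapping $V$ and $V^\vee$ does not justify it on its own.
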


\begin{proof} Pick a nonzero element of $R_5$ and identify $\rH^1(F, \cO_F)$ with $E$ via \eqref{eq:choice-coordinates}. In the above description of $Y$, the line $\ell_x$ is the subspace $\bbP(T_x F) \subset \bbP(\rH^1(F, \cO_F))$. Moreover, evaluating a global differential $2$-form at $x$ corresponds to seeing it as skew-symmetric bilinear form on $\rH^1(F, \cO_F)$ via the isomorphism $\rH^0(F, \omega_F) \iso \bigwedge^2 \rH^1(F, \cO_F)^\vee$ and then restricting such a form to $T_x F$. Since $L_v$ is the subspace of skew-symmetric forms $\phi$ with $v \in \ker \phi$, saying that any such form vanishes on~$T_x F$ means that $v$ belongs to $T_x F$. 
\end{proof}

In particular, the linear system $L_v$ is base-point free if and only if $[v] \notin Y$. Therefore, the line bundle $\mathscr{L}\otimes \omega_F$ is globally generated for general $\mathscr{L}\in \on{Pic}^0(F)$. 

To conclude the proof of \cref{thm:global-generation-line-bundle}, it remains to justify the final claim.
By assumption the abelian variety $A = \Pic^0(F)$ is simple. Let $\cP$ be the Poincar\'e bundle on $F \times A$ and 
\[ \pr_F \colon F \times A \to F, \qquad \pr_A \colon F \times A \to A,\]
 the two projections. By the \cref{lem:line-bundles-coh} and the base change theorem for coherent cohomology, the coherent sheaf $\pr_{A \ast} (\cP \otimes \pr_F^\ast \omega_F)$
has fiber $\rH^0(F, \cM \otimes \omega_F)$ at any nontrivial $\cM \in A$. Again, the locus in $F\times A$ where the morphism 
\[ \pr_A^\ast \pr_{A \ast} (\cP \otimes \pr_F^\ast \omega_F) \to \cP \otimes \pr_F^\ast \omega_F\]
is surjective is open, and its complement has a closed image $Z \subset A$ via $\pr_A$. Moreover, by construction, the intersection of $Z$ with $A \smallsetminus \{ 0 \}$ is the locus of nontrivial $\cM \in A$ such that $\cM \otimes \omega_F$ is not globally generated. Let 
\[ \widetilde{Z} \subset A\] be the Zariski closure of $Z \cap A_{\mathrm{tor}}$, where $A_{\mathrm{tor}} \subset A$ denotes the torsion subgroup. By the main result of \cite{raynaud1983sous}, the subvariety $\widetilde{Z}$ is a finite union of translates of abelian subvarieties of $A$. As we just showed, the subvariety $Z$ is not the whole of $A$. Therefore, if $A$ is simple, then $\widetilde{Z}$ is finite. This completes the proof of \cref{thm:global-generation-line-bundle}. \qed 

\subsection{The Jacobian ring of the Fermat cubic threefold} \label{sec:Fermat-cubic} We now begin preparations to prove \cref{thm:rank-of-cup-product}. To prove the generic lower bound for the rank of the map $\mu$ in \cref{thm:rank-of-cup-product} we will reduce to the case of Fermat cubic threefold
\[
Y\subset \bbP^4
\quad \text{with equation} \quad 
F=x_0^3+x_1^3+x_2^3+x_3^3+x_4^3.
\]
This case is down-to-earth enough to carry out an explicit computation in the Jacobian ring
\[
R:=\bbC[x_0,\dots,x_4]/ J \quad \textup{where} \quad J = (\partial F / \partial x_i : i = 0, \dots, 4) = (x_0^2, \dots, x_4^2).
\]
Let $R_i$ be the $i$-th graded piece of $R$. In the proof of \cref{thm:rank-of-cup-product} we will be interested in computing the rank of the linear map
\[ \nu \colon \Sym^2 R_3 \too \Hom(\textstyle \bigwedge^2 R_1, \bigwedge^2 R_4), \qquad f \cdot g \longmapsto [\phi \wedge \psi \mapsto f \phi \wedge g \psi +  g \phi \wedge  f \psi].  \]
For $a, b \in R$ here we wrote $ab \in R$ for the multiplication of the ring $R$ and $a \cdot b \in \Sym^2 R$ for the multiplication in the symmetric algebra over $R$.

\begin{proposition} \label{Prop:Fermat-rank-50} The linear map $\nu$ has rank $50$. \end{proposition}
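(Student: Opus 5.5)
The approach is to exploit the fine multigrading of the Fermat Jacobian ring. The ideal $J=(x_0^2,\dots,x_4^2)$ is monomial, so $R$ has a basis of squarefree monomials $x_I=\prod_{i\in I}x_i$ with $I\subset\{0,\dots,4\}$, and $R$ is graded by $\bbZ^5$ with $\deg x_i=e_i$. In particular $\dim R_1=\dim R_4=5$ and $\dim R_3=10$, so that $\dim\Sym^2R_3=55$. The claim is therefore equivalent to $\dim\ker\nu=5$.

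The map $\nu$ is equivariant for the torus $(\bbC^\times)^5$ acting diagonally, once we give the target the induced weights. Concretely, a map sending $x_a\wedge x_b$ to $x_{A}\wedge x_{A'}$ with $|A|=|A'|=4$ has weight $e_A+e_{A'}-e_a-e_b$. Hence $\nu$ is block diagonal with respect to the weight decomposition, and it suffices to compute the rank block by block. The weight of $x_I\cdot x_J$ is $e_I+e_J$, which has entry $2$ on $I\cap J$, entry $1$ on $I\triangle J$, and $0$ elsewhere. The blocks of $\Sym^2R_3$ are:
\begin{enumerate}
\item For $|I\cap J|=3$, i.e.\ $I=J$, we get $10$ one-dimensional blocks.
\item For $|I\cap J|=2$ we get $30$ one-dimensional blocks, since $x_I\cdot x_J$ is determined by its weight.
\item For $|I\cap J|=1$ we get $5$ blocks, one for each choice of the common index $a$. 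Each is spanned by the $3$ ways of splitting the remaining four indices into two pairs, so it is three-dimensional.
\end{enumerate}
These account for $10+30+15=55$ dimensions.

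I then check that $\nu$ is nonzero on every one-dimensional block by exhibiting a single evaluation. For $I=J$, take $\phi,\psi$ to be the two variables outside $I$; then $\nu(x_I\cdot x_I)(\phi\wedge\psi)=2\,x_I\phi\wedge x_I\psi\neq 0$. For $I=\{a,b,c\}$ and $J=\{a,b,d\}$ with remaining index $e$, evaluate at $x_e\wedge x_d$. One term vanishes because $x_d^2=0$, and the other is $x_{abde}\wedge x_{abcd}\neq 0$. So these $40$ blocks contribute rank $40$.

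The main step, and the only real computation, is the five three-dimensional blocks. By the $S_5$-symmetry permuting the variables it suffices to treat the block with common index $0$, spanned by
\[
u_1=x_{012}\cdot x_{034},\qquad u_2=x_{013}\cdot x_{024},\qquad u_3=x_{014}\cdot x_{023}.
\]
I will show that $\nu$ has rank exactly $2$ on this block. For this I evaluate $\nu(u_k)$ on the basis vectors $x_a\wedge x_b$. The terms $f\phi$ and $g\psi$ are nonzero only if $\phi\notin\{x_i: i\in I\}$ and $\psi\notin\{x_j: j\in J\}$, which leaves only a few pairs $\{a,b\}\subset\{1,2,3,4\}$ to consider. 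I then record the coefficients, with signs coming from the ordering of the wedge products, in the basis $x_A\wedge x_{A'}$ of $\bigwedge^2R_4$.

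I expect to find that two of the vectors $\nu(u_k)$ are independent, and that there is a single linear relation, presumably of the form $u_1\pm u_2\pm u_3$ with the signs forced by the wedge-product orderings. Verifying that this relation really holds, with consistent signs, is the delicate point, and a sign error here would change the answer. Granting it, each of the five blocks contributes rank $2$, and the total rank is $40+5\cdot 2=50$.
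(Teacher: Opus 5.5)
Your setup is the same as the paper's. You block-diagonalize $\nu$ by torus weights: the paper uses $\mu_3^5$, but since the source weights $e_I+e_J$ have entries in $\{0,1,2\}$, this gives the same blocks as your $(\bbC^\times)^5$. You note that the $40$ one-dimensional blocks are not killed, and you reduce via $S_5$ to a single three-dimensional block. Your dimension counts and your checks on the one-dimensional blocks are correct.

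The gap is that the step carrying the whole content of the proposition is only conjectured (``I expect'', ``granting it''). That step is the claim that $\nu$ has rank exactly $2$ on $\langle u_1,u_2,u_3\rangle$, and you establish neither the lower bound nor the relation. The number $50$ is equivalent to this claim: rank $1$ or $3$ on these blocks would give total rank $45$ or $55$. Your intended method, evaluating on basis vectors, is the right one; it just has to be carried out.

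It closes quickly.
\begin{itemize}
\item \textbf{Lower bound.} Evaluating at $x_2\wedge x_3$ sends $u_1,u_2,u_3$ to $x_{0234}\wedge x_{0123}$, $-x_{0234}\wedge x_{0123}$, $0$. Evaluating at $x_2\wedge x_4$ sends them to $x_{0234}\wedge x_{0124}$, $0$, $-x_{0234}\wedge x_{0124}$. The coefficient vectors $(1,-1,0)$ and $(1,0,-1)$ are independent, so the rank on the block is at least $2$.
\item \textbf{The relation.} It is $u_1+u_2+u_3$, with all plus signs.
\item \textbf{Verifying it without chasing signs.} The stabilizer $S_4$ of $0$ in $S_5$ permutes $u_1,u_2,u_3$ without signs, since $R$ is commutative and $\Sym^2$ is symmetric. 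So $u_1+u_2+u_3$ is $S_4$-invariant. The map $\nu$ is $S_5$-equivariant, and $S_4$ acts transitively on unordered pairs in $\{1,2,3,4\}$. Also $\nu(u_k)(x_0\wedge x_b)=0$ for every $k$, because $x_0$ divides both factors of $u_k$ and $x_0^2=0$.
\item \textbf{Conclusion.} The single identity $\nu(u_1+u_2+u_3)(x_2\wedge x_3)=0$, which the values above confirm, therefore forces $\nu(u_1+u_2+u_3)=0$.
\end{itemize}
This is exactly how the paper finishes its proof.
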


\begin{proof} As we said, the proof is an honest computation. To begin with, for a subset $I = \{ i_1, \dots, i_k \}$ of $\{ 0, \dots, 4\}$ write
\[ x_I = x_{i_1 \dots i_k} := x_{i_1} \cdots x_{i_k} \in R_k.\]
With this notation, we have $R_k = \langle x_I : I \subset \{ 0, \dots, 4 \} , |I| = k\rangle$.
To express the map $\nu$ with respect to this basis, notice that, for any $t, u \in \{0, \dots, 4\}$ and any subsets $I, J \subset \{ 0, \dots, 4\}$ of cardinality $3$, we have
\[ x_I x_t \wedge x_J x_u \neq 0 \quad \iff \quad  t \not \in I, \; \; u \not \in J, \; \; I \cup \{t \} \neq J \cup \{ u \}. \]
Moreover, if this is the case and $I \neq J$, then $x_J x_t \wedge x_I x_u = 0$. The explicit expression of $\nu$ then depends on the cardinality of $I \cap J$. If $|I \cap J| = 3$, that is $I = J$, we have
\[
\nu(x_I \cdot x_I)(x_t \wedge x_u) = 
\begin{cases}
2 x_I x_t  \wedge x_I x_u & \textup{if $I \cup \{ t, u\} = \{0, \dots, 4\}$},\\
0 & \text{otherwise}.
\end{cases}
\]
Suppose now $|I \cap J| = 2$. In this case the sets $I \smallsetminus J$, $J \smallsetminus I$ and $\{ 0, \dots, 4\} \smallsetminus (I \cup J)$ are singletons and we call respectively $i$, $j$ and $k$ their elements. Then,
\[
\nu(x_I \cdot x_J)(x_t \wedge x_u) = 
\begin{cases}
\pm \, x_I x_j  \wedge x_J x_k & \textup{if $t = j$, $u = k$,}\\
\pm \, x_I x_k  \wedge x_J x_i & \textup{if $t = k$, $u = i$,}\\
0 & \text{otherwise}.
\end{cases}
\]
Finally, if $|I \cap J| = 1$, we have
\[
\nu(x_I \cdot x_J)(x_t \wedge x_u) = 
\begin{cases}
\pm \, x_I x_t  \wedge x_J x_u & \textup{if $t \not \in I$, $u \not \in J$,} \\
\pm \, x_I x_u  \wedge x_J x_t & \textup{if $u \not \in I$, $t \not \in J$,} \\
0 & \text{otherwise}.
\end{cases}
\]

The other computational advantage of the Fermat cubic threefold is that the map $\nu$ is equivariant under the natural action on $R$ of the subgroup $G \subset \GL_5(\bbC)$ fixing $F$. The group $G$ is the semidirect product of the subgroup $S_5$ permuting coordinates and the subgroup $T = \mu_3^5$ of diagonal matrices whose nonzero entries are third roots of unity. For $k = 1, 2, 3$ the subspace
\[ V_k := \bigoplus_{|I \cap J| = k} \langle x_I \cdot x_J\rangle \subset \Sym^2 R_3 \]
is stable under the action of $G$, where the direct sums ranges over unordered pairs $I, J$ of subsets with cardinality $3$. Moreover, if $|I \cap J| \ge 2$, the subspace $\langle x_I \cdot x_J \rangle$ is an eigenspace for the action of~$T$. Indeed, $x_I \cdot x_J$ is an eigenvector for the character
\[ T \ni t = (t_0, \dots, t_4) \longmapsto \prod_{i = 0}^4 t_i^{\delta_I(i) + \delta_J(i)} \]
where $\delta_I \colon \{0, \dots, 4 \} \to \{0,1\}$ is the indicator function of $I$, and similarly for $J$. When $|I \cap J| \ge 2$, the above character determines the unordered pair $I,J$ uniquely. Instead, when $|I \cap J| = 1$, the eigenspace decomposition of $V_1$ is
\[ V_1 = \bigoplus_{i = 0}^4 W_i \quad \textup{with} \quad W_i = \bigoplus_{I \cap J = \{ i \}} \langle x_I \cdot x_J \rangle.\]
Indeed, if $I \cap J = \{ i \}$, the character of $x_I \cdot x_J$ is $t_i^2 \prod_{j \neq i} t_j$, hence depends only on $i$. Note that, for fixed $i$, there are exactly three unordered pairs $I, J$ of subsets of cardinality $3$ such that $I \cap J = \{ i \}$.

To conclude the proof, since $\nu$ is morphism of representations of $G$, it suffices to compute the rank of the restriction of $\nu$ to each $T$-eigenspace of $\Sym^2 R$. If $|I \cap J| \ge 2$, we saw that $\nu(x_I \cdot x_J)$ is nonzero, hence injective on the eigenspace $\langle x_I \cdot x_J \rangle$. Instead, we claim that the kernel of $\nu\rvert_{W_i}$ has dimension one. This will conclude the proof, since we will then have
\[ \rk \nu = \dim \Sym^2 R_3 - 5 = 50,\]
because $R_3 \iso \rH^1(Y, T_Y)$ has dimension $10$. To prove the claim, we may assume $i =0$, so that a basis of $W_0$ is
\[
f:=x_{012}\cdot x_{034},\qquad
g:=x_{013}\cdot x_{024},\qquad
h:=x_{014}\cdot x_{023}.
\]
The explicit expression of $\nu$ show that it has rank $\ge 2$ on $W_0$, because for example we have:
\begin{align*}
\nu(f)(x_2\wedge x_3) &= x_{0234}\wedge x_{0123},
&\nu(g)(x_2\wedge x_3) &=- x_{0234}\wedge x_{0123},
&\nu(h)(x_2\wedge x_3) &= 0,
\\
\nu(f)(x_2\wedge x_4) &= x_{0234}\wedge x_{0124},
&\nu(g)(x_2\wedge x_4) &= 0, 
&\nu(h)(x_2\wedge x_4) &= - x_{0234}\wedge x_{0124}.
\end{align*}
The above also shows $\nu(f + g + h)(x_2 \wedge x_3) =0$. But the stabilizer of $0$ in $S_5$ acts transitively on unordered pairs in $\{1, \dots, 4\}$ and leaves $f + g + h$ invariant, thus $\nu(f + g + h) = 0$.
\end{proof}

\subsection{The upper bound on the rank in \cref{thm:rank-of-cup-product}}\label{subsec:rank-cup-50} Let $Y \subset \bbP(E)$ be a smooth cubic threefold, where~$E$ is a vector space of dimension $5$, and $F:=F(Y)$ the surface of lines on $Y$. With notation as in \cref{thm:rank-of-cup-product}, we are interested in the linear map
\[ \mu \colon \Sym^2 \rH^1(F, T_F) \to \rH^2(F, \textstyle \bigwedge^2 T_F)\]
induced by the cup product. Via Serre duality, the dual of $\mu$ can be seen as a map
\[ \mu^\vee \colon \rH^0(F, \omega_F^{\otimes 2}) \to \Sym^2 \rH^1(F, T_F)^\vee.\]
We first show that $\mu^\vee$ vanishes on the bicanonical sections cutting out the divisor of lines of second type. There is a distinguished such section, obtained as follows. Let \(p \colon P \to F\) be the universal line and \(q \colon P \to Y\) the projection. The proof of \cite[Chapter 5, Proposition 1.1]{huybrechts2023geometry} shows that the determinant of the differential \(\mathrm{d}q \colon T_P \to q^\ast T_Y\) defines a section of
\[
\mathcal{H}om(\det T_P, q^\ast \det T_Y) \cong p^\ast \omega_F^{\otimes 2}, 
\]
and that via the above identification we have
\[
\det(\rd q) = p^\ast \sigma \quad \text{for a unique \(\sigma \in \rH^0(F, \omega_F^{\otimes 2})\)}.
\]
 By \cite[Chapter 5, Proposition 1.1]{huybrechts2023geometry}, the section \(\sigma\) cuts out the divisor of lines of second type.

\begin{proposition} \label{prop:kernel-cup-product} With the above notation, we have $\mu^\vee( \sigma ) = 0$. In particular, we have $\rk \mu \le 50$.
\end{proposition}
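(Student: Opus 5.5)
The plan is to show that the cup product $\xi \cup \eta \in \rH^2(F, \bigwedge^2 T_F)$ becomes zero after contraction with $\sigma$, for all $\xi, \eta \in \rH^1(F, T_F)$. I will transport this cup product to the universal line $P$ and then to $Y$. On $Y$, the cup product of deformation classes lands in a group that vanishes by a Jacobian ring degree count.

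View $\sigma$ as a homomorphism $\sigma \colon \bigwedge^2 T_F \to \omega_F$. Then the quantity to compute is
\[ \langle \mu^\vee(\sigma), \xi \cdot \eta \rangle = \sigma_\ast(\xi \cup \eta) \in \rH^2(F, \omega_F) \simeq \bbC. \]
Since $p \colon P \to F$ is a $\bbP^1$-bundle, pullback $p^\ast$ is injective on $\rH^2$. I will therefore work on $P$ with the section $\det(\rd q) = p^\ast \sigma$. Here I use $\det T_P \simeq p^\ast \det T_F \otimes T_{P/F}$ and the fact that $\det(\rd q) = \bigwedge^3 \rd q$ is a map $\bigwedge^3 T_P \to q^\ast \bigwedge^3 T_Y$.

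The key input is that every first-order deformation of $F$ comes from a deformation of $Y$. I will use the isomorphism $\rH^1(Y, T_Y) \simeq \rH^1(F, T_F)$, which is compatible with the Kodaira--Spencer maps of the universal family. The deformation of $Y$ induces one of the pair $(P \to F, P \to Y)$. So for $\xi$ coming from $\xi_Y \in \rH^1(Y, T_Y)$, there is a class $\xi_P \in \rH^1(P, T_P)$ with two properties: $\rd p(\xi_P) = p^\ast \xi$, and $\rd q(\xi_P) = q^\ast \xi_Y$ in $\rH^1(P, q^\ast T_Y)$.

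Next, let $\tau$ be a local section of $T_{P/F}$. I compare $p^\ast(\sigma_\ast(\xi\cup\eta))$, twisted by $\tau$, with $(\bigwedge^3 \rd q)(\xi_P \wedge \eta_P \wedge \tau)$, which is a class in $\rH^2(P, q^\ast\omega_Y^\vee \otimes \text{twist})$. The wedge splits as $\bigwedge^2 T_P \otimes T_{P/F} \to p^\ast\bigwedge^2 T_F \otimes T_{P/F}$ modulo terms containing $\tau$ twice, which vanish because $T_{P/F}$ has rank one. Using multiplicativity of $\bigwedge^\bullet \rd q$, this expression equals
\[ q^\ast(\xi_Y \cup \eta_Y) \wedge \rd q(\tau). \]
Hence it factors through $\xi_Y \cup \eta_Y \in \rH^2(Y, \bigwedge^2 T_Y)$.

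This last group receives the cup product $\rH^1(Y,T_Y)^{\otimes 2} \to \rH^2(Y, \bigwedge^2 T_Y)$. By Griffiths' Jacobian ring description, this map is the multiplication $R_3 \times R_3 \to R_6$. Since the socle of $R$ sits in degree $5$, we have $R_6 = 0$, so the cup product vanishes. Concretely, $\bigwedge^2 T_Y \simeq \Omega^1_Y(2)$, and I would check $\rH^2(Y, \Omega^1_Y(2)) = 0$ directly, or at least that the cup product lands in zero. Consequently $p^\ast \sigma_\ast(\xi \cup \eta)$, paired with the fiber direction, vanishes. Integrating over the $\bbP^1$-fibers (pairing with $T_{P/F} \simeq \omega_{P/F}^\vee$ and pushing forward via $\rR^1 p_\ast \omega_{P/F} \simeq \cO_F$) recovers $\sigma_\ast(\xi\cup\eta)$ up to a nonzero constant. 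This gives $\mu^\vee(\sigma) = 0$. Finally, $\rk\mu \le 51 - 1 = 50$ follows because $h^0(F,\omega_F^{\otimes 2}) = 51$ and $\sigma \neq 0$.

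The main obstacle is the middle step. It requires setting up the simultaneous deformation of $P$ over $F$ and $Y$ carefully enough that the classes $\xi_P$ exist with both compatibilities. It also requires the trace/integration over fibers to intertwine $\det(\rd q)$ with $p^\ast\sigma$ and the cup products correctly. I would first try a Čech-cocycle computation, using local liftings of vector fields on an affine cover of $Y$ pulled back to $P$.
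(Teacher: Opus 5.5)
Your proposal is correct and follows the paper's proof essentially step by step: it uses lifts $\xi_P$ compatible with both $\rd p$ and $\rd q$, then $\det \rd q = p^\ast\sigma$ together with multiplicativity of $\bigwedge^\bullet \rd q$, and finally the vanishing $\rH^2(Y, \bigwedge^2 T_Y) \simeq \rH^2(Y, \Omega^1_Y(2)) = 0$ via Bott vanishing. Your wedge with a vertical field $\tau$ is just the paper's identity $\det f \otimes \id = (\id \otimes f^\vee)\circ \bigwedge^2 f$ evaluated on $T_{P/F}$, and you should rely on the direct Bott-vanishing computation rather than the unjustified Jacobian-ring heuristic $R_3 \times R_3 \to R_6$. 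The closing fiber-integration step is superfluous, and as written it is degree-mismatched: since $q^\ast\omega_Y^\vee \otimes T_{P/F}^\vee \simeq p^\ast \omega_F$, the class you compute already is $p^\ast \sigma_\ast(\xi\cup\eta) \in \rH^2(P, p^\ast\omega_F)$, so the injectivity of $p^\ast$ on $\rH^2$ that you invoked earlier concludes directly.
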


\begin{proof}
Since $\mu$ is the map induced by the cup product, unwinding the identification given by Serre duality, it suffices to show that
\[
\sigma(u \cup v) = 0 \qquad \text{for all } u, v \in \rH^1(F, T_F).
\]
Note that $u \cup v \in \rH^2(F, \bigwedge^2 T_F)$, hence evaluating $\sigma$ on it yields an element of $\rH^2(F, \omega_F)$. The construction of the Fano surface works in families; in particular, it induces a map between the first-order deformations of $Y$ and of $F$. Moreover, by \cite[chapter 5, prop.~2.14]{huybrechts2023geometry}, this map is an isomorphism
\[
\rH^1(Y, T_Y) \stackrel{\sim}{\too} \rH^1(F, T_F),
\]
whose inverse we denote by $u \mapsto u_Y$. Since $P$ is the projective tangent bundle of $F$, we also have a natural linear map between first-order deformations $\rH^1(F, T_F) \to \rH^1(P, T_P)$, written $u \mapsto u_P$. One checks that the following diagram commutes:
\begin{equation} \label{double-square}
\begin{tikzcd}[column sep=35pt]
\rH^1(Y, T_Y) \ar[r, "\sim"] \ar[d, "q^\ast", swap] & \rH^1(F, T_F) \ar[r, equal]\ar[d]& \rH^1(F, T_F) \ar[d, "p^\ast"] \\
\rH^1(P, q^\ast T_Y) & \rH^1(P, T_P) \ar[l, "\rd q", swap]  \ar[r, "\rd p"] & \rH^1(P, p^\ast T_F).
\end{tikzcd}
\end{equation}
To prove the statement, it suffices to show that
\[
(p^\ast \sigma)(p^\ast u \cup p^\ast v) = p^\ast\bigl(\sigma(u \cup v)\bigr)
\]
vanishes. Indeed, the pullback map $p^\ast \colon \rH^2(F, \omega_F) \to \rH^2(P, p^\ast \omega_F)$ is injective by the Leray spectral sequence, since $\rR^1 p_\ast \cO_P = 0$ (as $P$ is a $\bbP^1$-bundle over $F$). Now consider the commutative diagram
\[
\begin{tikzcd}[column sep=35pt]
\rH^1(P, p^\ast T_F)^{\otimes 2} \ar[r, "\cup"] 
& \rH^2(P, p^\ast \bigwedge^2 T_F) \ar[r, "p^\ast \sigma"]
& \rH^2(P, p^\ast \omega_F^{\otimes 2} \otimes p^\ast \bigwedge^2 T_F) \\
\rH^1(P, T_P)^{\otimes 2} \ar[r, "\cup"] \ar[u, "\rd p^{\otimes 2}"]
& \rH^2(P, \bigwedge^2 T_P) \ar[u, "\bigwedge^2 \rd p"] \ar[r, "p^\ast \sigma"]
& \rH^2(P, p^\ast \omega_F^{\otimes 2} \otimes \bigwedge^2 T_P) \ar[u, "\id \otimes \bigwedge^2 \rd p", swap].
\end{tikzcd}
\]
Via the isomorphism $p^\ast \omega_F^{\otimes 2} \otimes p^\ast \bigwedge^2 T_F \simeq p^\ast \omega_F$, the composite map in the first row sends $p^\ast u \otimes p^\ast v$ to $(p^\ast \sigma)(p^\ast u \cup p^\ast v)$. Therefore, since the rightmost square in \eqref{double-square} commutes, it suffices to show that the composition of the second row with $\rH^1(F, T_F) \to \rH^1(P, T_P)$ vanishes, i.e.
\[
(p^\ast \sigma)(u_P \cup v_P) = 0 \quad \text{for all } u, v \in \rH^1(F, T_F).
\]
By definition, $p^\ast \sigma = \det \rd q$. Moreover, for any morphism $f \colon \cV \to \cW$ of vector bundles of rank $3$, the following diagram commutes:
\[
\begin{tikzcd}
\bigwedge^2 \cV  \ar[d, "\bigwedge^2 f"] \ar[rr, "\det f \otimes \id"]&  & \cHom(\det \cV, \det \cW) \otimes \bigwedge^2 \cV \\  
\bigwedge^2 \cW \ar[r, equal] & \det \cW \otimes \cW^\vee \ar[r, "\id \otimes f^\vee"]  & \det \cW \otimes \cV^\vee \ar[u, equal] 
\end{tikzcd}
\]
It follows that
\[
(p^\ast \sigma)(u_P \cup v_P)
= \det \rd q (u_P \cup v_P)
= (\rd q)^\vee \bigl(\textstyle \bigwedge^2 \rd q (u_P \cup v_P)\bigr).
\]
Since the left square in \eqref{double-square} commutes, we have
\[
\textstyle \bigwedge^2 \rd q (u_P \cup v_P)
= \rd q(u_P) \cup \rd q(v_P)
= q^\ast u_Y \cup q^\ast v_Y 
= q^\ast(u_Y \cup v_Y).
\]
The key point is that
\[
\rH^2(Y, \textstyle \bigwedge^2 T_Y) = 0,
\]
hence $u_Y \cup v_Y = 0$ for all $u, v \in \rH^1(F, T_F)$, which concludes the proof. Since $\bigwedge^2 T_Y \simeq \Omega_Y^1(2)$, this cohomological vanishing follows from the exact sequence of differentials for $Y \subset \bbP^4$, together with the standard exact sequences for the restrictions of $\Omega_{\bbP^4}^1(2)$ and $\cO_{\bbP^4}(-1)$ to $Y$, and Bott vanishing on $\bbP^4$.
\end{proof}

\begin{figure}
	\includegraphics[scale=0.4]{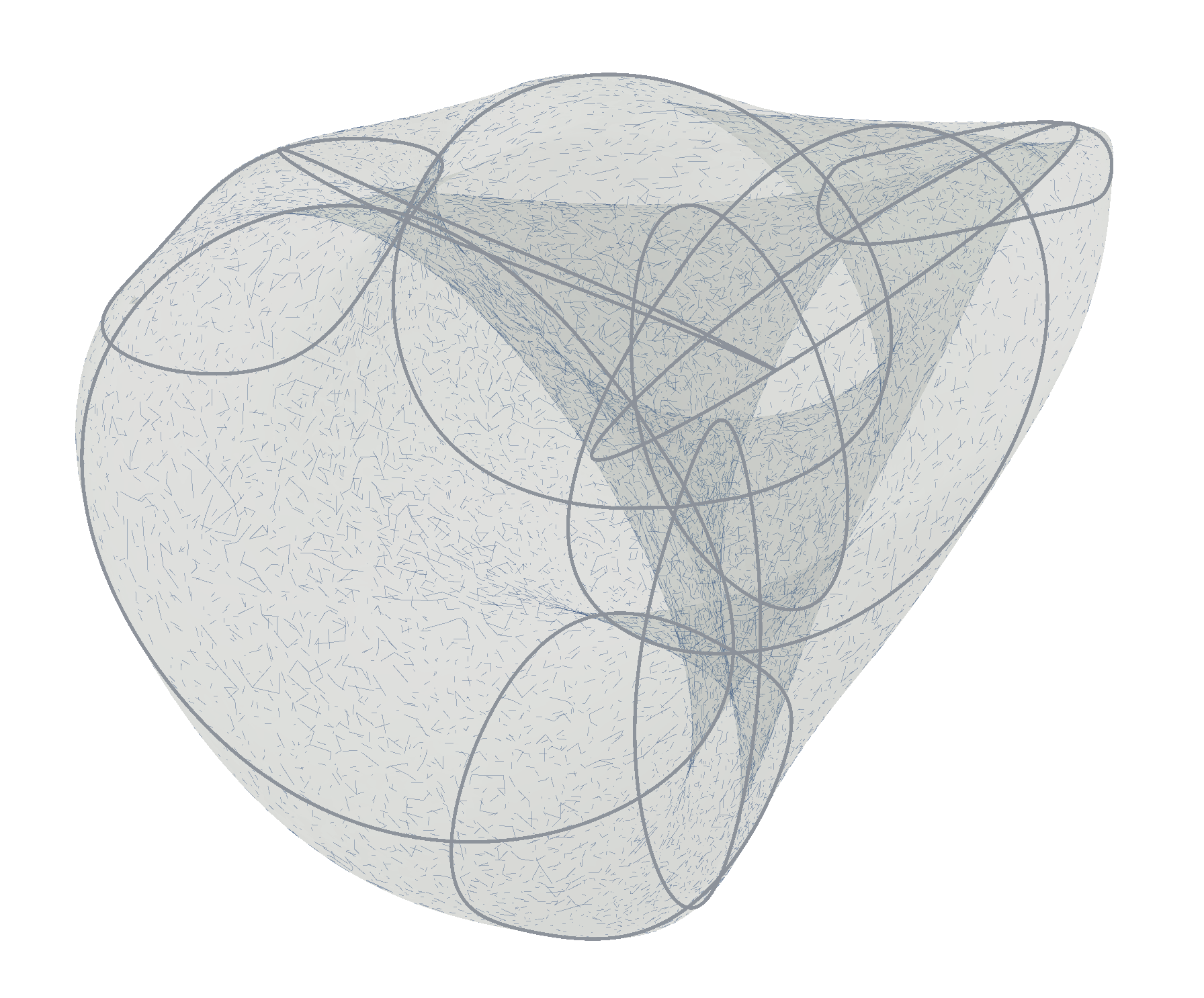}
	\caption{The real points of the Fano surface of lines on the Fermat cubic threefold, projected from its Pl\"ucker embedding to $\mathbb{R}^3$. The gray curve is the  divisor of lines of the second type, which in this case has 30 components, each smooth of genus one; 10 of these have real points. See \href{https://chocolitt.github.io/fermat_fano_real_mesh_web.html}{here} for an interactive visualization.}
\end{figure}

\subsection{Conclusion of the proof of \cref{thm:rank-of-cup-product}}
We now give a lower bound on the rank of $\mu$. In fact, we are going to compute the rank of the composite map
\[ \nu \colon \Sym^2 \rH^1(F, T_F) \stackrel{\mu}{\too} \rH^2(F, \textstyle \bigwedge^2 T_F) \stackrel{c}{\too} \Hom(\rH^0(F,\omega_F),\rH^2(F,\cO_F)), \]
where $c$ is induced by the cup product. Via Serre duality, the map $c$ can be viewed as the dual of the multiplication map
\[ \rH^0(F, \omega_F) \otimes \rH^0(F, \omega_F) \too \rH^0(F, \omega_F^{\otimes 2}). \]
This map has image $\rH^0(G, \cO_G(2)) \subset \rH^0(F, \omega_F^{\otimes 2})$ where $G = \Gr_2(E)$ is the Grassmannian of lines in $\bbP(E)$ and $\cO_G(1)$ denotes the Pl\"ucker polarization. Hence for the proof of~\cref{thm:rank-of-cup-product} it only remains to show:

\begin{proposition}\label{thm:rank-50}
For a generic cubic threefold $Y$ we have $\rk \nu =50$. In particular $\rk \mu \ge 50$.
\end{proposition}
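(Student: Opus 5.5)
Since $\rk \nu \le \rk \mu \le 50$ by \cref{prop:kernel-cup-product}, it suffices to produce one smooth cubic threefold with $\rk \nu \ge 50$. The rank of $\nu$ is lower semicontinuous on the open set of smooth cubics in $\bbP(\Sym^3 E^\vee)$, so this gives $\rk \nu = 50$ generically. Indeed, as $Y$ varies over smooth cubics the spaces $\rH^1(F,T_F)$, $\rH^0(F,\omega_F)$, $\rH^2(F,\cO_F)$ form vector bundles (their dimensions are constant), and $\nu$ is a morphism of vector bundles. The witness will be the Fermat cubic, where \cref{Prop:Fermat-rank-50} already computes the rank of an explicit map $\Sym^2 R_3 \to \Hom(\bigwedge^2 R_1, \bigwedge^2 R_4)$. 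The work is therefore to identify $\nu$ with this Jacobian ring map, up to isomorphisms of source and target.

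First identify the source. By \cite[chapter 5, prop.~2.14]{huybrechts2023geometry} we have $\rH^1(F,T_F) \simeq \rH^1(Y,T_Y)$, and Griffiths' theory gives $\rH^1(Y,T_Y) \simeq R_3$. Next identify the target, using the isomorphisms already used in the proof of \cref{thm:global-generation-line-bundle}. These are $\rH^1(F,\cO_F)\simeq \rH^{1,2}(Y)\simeq R_4$ and $\rH^2(F,\cO_F)\simeq \bigwedge^2 \rH^1(F,\cO_F) \simeq \bigwedge^2 R_4$. Dually, $\rH^0(F,\omega_F)\simeq \bigwedge^2 \rH^1(F,\cO_F)^\vee\simeq \bigwedge^2 \rH^{2,1}(Y)$, and $\rH^{2,1}(Y)\simeq R_1$ by Griffiths residues.

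The key compatibility is this. Under the Abel--Jacobi isomorphism, the action of a Kodaira--Spencer class $u\in \rH^1(F,T_F)$ on $\rH^{1,0}(F)$ and $\rH^{0,1}(F)$ matches the infinitesimal period map of $Y$ on $\rH^{2,1}(Y)\to \rH^{1,2}(Y)$. By Carlson--Griffiths, the latter is multiplication $R_1 \to R_4$ by the element of $R_3$ corresponding to $u$.

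Then compute $\nu(u\cdot v)$ on $\bigwedge^2 \rH^1(F,\cO_F)^\vee$, using that the surface's Hodge structure in degree $2$ on $\rH^{2,0}$ and $\rH^{0,2}$ is the exterior square of the degree-$1$ one. The second-order Higgs map $\rH^{2,0}\to\rH^{0,2}$ attached to $u\cdot v$ acts on $\phi\wedge\psi$ by the Leibniz rule. It is the symmetrization $\phi\wedge\psi \mapsto u\phi\wedge v\psi + v\phi\wedge u\psi$, since the terms $uv\phi\wedge\psi$ vanish: there is no $\rH^{-1,2}$. This is exactly the formula for $\nu$ in \cref{sec:Fermat-cubic}, and it also matches $c\circ\mu$ because cup products are compatible with the factorization \eqref{factorization-derivative-of-period-map}. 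Applying \cref{Prop:Fermat-rank-50} then gives $\rk\nu = 50$ for the Fermat cubic, hence for generic $Y$, and therefore $\rk\mu\ge 50$.

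I expect the main obstacle to be the compatibility step. Concretely, one must check that the isomorphism $\rH^1(Y,T_Y)\simeq \rH^1(F,T_F)$ intertwines the Kodaira--Spencer actions on $\rH^1(F,\cO_F)$ and on $\rH^{1,2}(Y)$ up to a nonzero scalar. I would argue this via the Abel--Jacobi isomorphism of Hodge structures $\rH^1(F)\simeq \rH^3(Y)(1)$, which is induced by the universal line $P$. Because it is induced by a correspondence defined in families, it is flat for Gauss--Manin, so its derivative commutes with the Higgs fields. Scalars and signs do not affect ranks.
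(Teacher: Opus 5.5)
Your proposal is correct and follows the paper's proof. Both identify $\nu$, via the incidence (Abel--Jacobi) isomorphisms and Griffiths' Jacobian ring description, with the explicit map of \cref{Prop:Fermat-rank-50}, and then conclude from the Fermat computation by semicontinuity. Your Leibniz-rule derivation of the symmetrized formula and the explicit upper bound $\rk \nu \le \rk \mu \le 50$ from \cref{prop:kernel-cup-product} spell out steps the paper leaves implicit.
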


\begin{proof} We first express everything in terms of the threefold: The incidence
correspondence induces isomorphisms $\rH^0(F, \Omega^1_F)\simeq \rH^1(Y, \Omega^2_Y)$ and
$\rH^1(F, \cO_F)\simeq \rH^2(Y, \Omega^1_Y)$. Moreover, for the Fano surface the wedge product maps
\[
\textstyle 
\bigwedge^2 \rH^0(F, \Omega^1_F) \stackrel{\sim}{\too} \rH^0(F, \omega_F),\qquad
\bigwedge^2 \rH^1(F, \cO_F)\stackrel{\sim}{\too} \rH^2(F, \cO_F),
\]
are isomorphisms~\cite[chapter 5, lemma 2.5]{huybrechts2023geometry}.
Finally, by~\cite[chapter 5, prop.~2.14]{huybrechts2023geometry} we have a natural isomorphism 
\[
\rH^1(Y,T_Y)\stackrel{\sim}{\too} \rH^1(F,T_F).
\]
Via these isomorphisms $\nu$ is identified with the cup product map
\[
\nu \colon \Sym^2 \rH^1(Y,T_Y)\to \Hom\bigl(\textstyle \bigwedge^2 \rH^1(Y, \Omega^2_Y),\bigwedge^2 \rH^2(Y, \Omega^1_Y) \bigr).
\]
We now express this in terms of the Jacobian ring of the cubic threefold $Y$. To do this, pick coordinates on $E$ and let~$F\in \bc[x_0,\dots,x_4]_3$ be a homogenous polynomial of degree three defining the smooth cubic $Y\subset \bbP^4$. Consider the Jacobian ring
\[
R(F):=\bc[x_0,\dots,x_4]/(\partial F / \partial x_i : i = 0, \dots, 4)
\]
with graded pieces $R(F)_k$ for $k \ge 0$. By Griffiths' description of the primitive cohomology of smooth hypersurfaces (see~\cite[prop.~6.2]{voisin07} for instance), there are
canonical identifications
\[
\rH^1(Y, \Omega^2_Y) \simeq R(F)_1,\qquad
\rH^2(Y, \Omega^1_Y) \simeq R(F)_4,\qquad
\rH^1(Y,T_Y) \simeq R(F)_3.
\]
Moreover, via these identifications, the cup product becomes the product of the ring $R$. With this in mind, the map $\nu$ is
\[
\nu \colon
\Sym^2 R(F)_3 \too \Hom(\textstyle \bigwedge^2 R(F)_1,\bigwedge^2 R(F)_4), \qquad f \cdot g \longmapsto [\phi \wedge \psi \mapsto f \phi \wedge g \psi +  g \phi \wedge  f \psi]. 
\]
As in \cref{sec:Fermat-cubic}, for $a, b \in R(F)$ we wrote $ab \in R(F)$ for the multiplication of the ring $R(F)$ and $a \cdot b \in \Sym^2 R(F)$ for the multiplication in the symmetric algebra over $R(F)$. Now, \cref{Prop:Fermat-rank-50} states that $\nu$ has rank $50$ for Fermat cubic threefold $Y$ with equation $F = x_0^3 + \cdots + x_4^3$. The statement then follows by semicontinuity.
\end{proof}

\section{Proof of the main theorem}

\subsection{Statements}
In this section we prove the main theorem of this paper. Let $S$ be a smooth complex variety and $\cY \to S$ a family of smooth cubic threefolds, meaning that 
\[ \cY \subset \bbP(\cE)\]
is a subvariety defined by a nonzero global section of $\Sym^3 \cE^\vee$ such that the projection $\cY \to S$ is smooth, where $\cE$ is a vector bundle of rank $5$ on $S$. Let $\pi \colon \cF \to S$ be the associated family of Fano surfaces of lines. Fix $s \in S$ and let
\[ Y:= \cY_s \quad \text{and} \quad F:= \cF_s\]
be the fibers at $s$. Statement (2) in the main theorem will be deduced from the following:

\begin{theorem} \label{Thm:conservativity} For $i = 1, 2$ let $\bbL_i$ be a rank one unitary local system over $\cF$ and $\bbV_i := \rR^2 \pi_\ast \bbL_i$. Assume the following:
\begin{enumerate}
\item the Kodaira--Spencer map $T_s S \to \rH^1(Y, T_Y)$ is surjective,\smallskip
\item the cup product map $\Sym^2 \rH^1(F, T_F) \to \rH^2(F, \bigwedge^2 T_F)$ has rank $50$,\smallskip
\item the line bundles $\bbL_i \otimes_\bbC \omega_F$ and $\bbL_i^\vee \otimes_\bbC \omega_F$ on $F$ are globally generated.\smallskip
\end{enumerate}
If the local systems $\bbV_1$ and $\bbV_2$ are isomorphic, then so are $\bbL_1 \rvert_F$ and $\bbL_2 \rvert_F$.
\end{theorem}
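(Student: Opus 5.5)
The plan is to reduce to \cref{cor:reconstruction-conservative}, applied to the family $\pi\colon \cF \to S$ with $d = 2$ and the flat line bundles $\cL_i := \bbL_i \otimes_\bbC \cO_\cF$ with their canonical unitary connections. The argument proceeds in four steps.

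\textbf{Step 1: versality of $\pi$.} Hypothesis (1) and the isomorphism $\rH^1(Y, T_Y) \iso \rH^1(F, T_F)$ of \cite[chapter 5, prop.~2.14]{huybrechts2023geometry} show that the Kodaira--Spencer map of $\pi$ at $s$ is surjective. This isomorphism is compatible with the Kodaira--Spencer maps, since the Fano surface construction works in families. Hence $\pi$ is versal at $s$ in the sense of \cref{sec:reconstruction}.

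\textbf{Step 2: the base locus $B$.} By hypothesis (2), $\mu$ has rank $50$, so $\ker(\mu^\vee)$ is one-dimensional because $h^0(F,\omega_F^{\otimes 2}) = 51$. By \cref{prop:kernel-cup-product}, this kernel is spanned by the section $\sigma$ cutting out the divisor of lines of second type. So $B$ is this bicanonical divisor. As in \cref{ampleness-base-locus}, $B' = B$ is an ample divisor with $\omega_F^\vee(B) \iso \omega_F$ ample, since $\omega_F$ is the Plücker polarization. Thus the geometric hypotheses of \cref{cor:reconstruction-conservative} hold.

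\textbf{Step 3: surjectivity of the evaluation maps.} Hypothesis (3) says that $\cL_i\otimes\omega_F$ and $\cL_i^\vee \otimes \omega_F$ are globally generated. Hence $\eta_{\cL_i}$ and $\eta_{\cL_i^\vee}$, and therefore $\eta_{\cL_i \oplus \cL_i^\vee}$, are surjective on all of $F$, in particular on $B$.

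\textbf{Step 4: from $\bbV_1 \iso \bbV_2$ to $H_{\cL_1} \iso H_{\cL_2}$.} This is the delicate point, because a local system does not determine its complex VHS. The fix is \cref{lem:independence-of-choices}. An isomorphism $\bbV_1 \iso \bbV_2$ transports the VHS of $\bbV_2$ to a second VHS on $\bbV_1$, whose infinitesimal Higgs bundle is isomorphic to $H_{\cL_2}$. The infinitesimal Higgs bundles $H_{\cL_i}$ are compatible with $\pi$ by \cref{ex:cup-product}. So the lemma gives a canonical isomorphism $\cE(H_{\cL_1}) \iso \cE(H_{\cL_2})$. Rather than an isomorphism of Higgs bundles, I would therefore use the first part of the proof of \cref{cor:reconstruction-conservative} directly. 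By \cref{reconstruction-thm}(1), $\cL_i|_B \iso \cE(H_{\cL_i})$, hence $\cL_1|_B \iso \cL_2|_B$.

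To conclude, $\cL_1|_F \otimes \cL_2|_F^\vee$ lies in $\Pic^0(F)$ and restricts trivially to the ample divisor $B$. By \cref{injectivity-pic0-restriction-ample-divisor} it is trivial. Two unitary flat line bundles with isomorphic underlying holomorphic bundles are isomorphic as flat bundles, by uniqueness of the unitary flat connection. Hence $\bbL_1|_F \iso \bbL_2|_F$.

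The main obstacle I expect is Step 4, namely checking that \cref{lem:independence-of-choices} applies. This requires the semisimplicity of $\bbV_i$, which follows from polarizability, and some care that the resulting isomorphism on $\cE$ is compatible with the identifications $\alpha_{\cL_i}$.
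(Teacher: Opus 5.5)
Your proposal is correct and is essentially the paper's proof. Hypothesis (2) forces $B$ to be the ample bicanonical divisor cut out by the one-dimensional $\ker(\mu^\vee)$; then \cref{reconstruction-thm}(1) with \cref{lem:independence-of-choices} gives $\cL_1|_B \simeq \cE(\bbV_1) \simeq \cE(\bbV_2) \simeq \cL_2|_B$, and \cref{injectivity-pic0-restriction-ample-divisor} plus uniqueness of the unitary flat connection finish the argument. Your Step 1 (versality of $\pi$ via $\rH^1(Y,T_Y)\simeq\rH^1(F,T_F)$) only makes explicit a point the paper leaves implicit, and the compatibility with the maps $\alpha_{\cL_i}$ you worry about is not needed, since only an abstract isomorphism $\cL_1|_B\simeq\cL_2|_B$ is used.
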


We now pass to statement (1) in the main theorem. Let $\bbL$ be a complex rank one local system of order $n$ on $\cF$ and consider the local system
\[ \bbV := \rR^2 \pi_\ast \bbL\]
on $S$. We are interested in computing its algebraic monodromy group, that is, the Zariski closure
\[ M := \overline{\im \rho} \; \subset \; \GL(\bbV_s) \quad \textup{of the monodromy representation} \quad \rho \colon \pi_1(S, s) \too \GL(\bbV_s).\]
Since the local system $\bbL$ is torsion, the connected component $M^\circ$ is a semisimple Lie group.
We will deduce the main theorem of the paper from the following more precise statement:

\begin{theorem} \label{Thm:precise-form-main-theorem} With the above notation, assume the following:
\begin{enumerate}
\item the Kodaira--Spencer map $T_s S \to \rH^1(Y, T_Y)$ is surjective,\smallskip
\item the cup product map $\Sym^2 \rH^1(F, T_F) \to \rH^2(F, \bigwedge^2 T_F)$ has rank $50$,\smallskip
\item for all $i \in (\bbZ / n \bbZ)^\times$ the line bundle $\bbL^{\otimes i} \otimes_\bbC \omega_F$ on $F$ is globally generated,\smallskip
\item the restriction of $\bbL$ to $F$ has order $n > 2$,\smallskip
\item the group $M$ is connected.
\end{enumerate}
Then,
\[(M, \bbV_s) \iso (E_6, V)\]
where~$V$ is one of the two irreducible representations of~$E_6$ with~$\dim V = 27$.
\end{theorem}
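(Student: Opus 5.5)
The proof has two halves: an upper bound $M\subset E_6$ from the literature, and a lower bound coming from the reconstruction results of \cref{sec:ivhs}, which rule out every proper semisimple subgroup of $E_6$.

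\textbf{Setting up the reconstruction hypotheses.} Hypotheses (1) and (2) together with \cref{thm:rank-of-cup-product} and \cref{prop:kernel-cup-product} show that $\ker(\mu^\vee)=\bbC\sigma$. Hence the base locus $B$ is the divisor of lines of second type, a bicanonical divisor. By \cref{ampleness-base-locus} we may take $B'=B$, for which $\omega_F^\vee(B)\simeq\omega_F$ is ample. Since $B$ is an ample divisor on a surface, $\rH^0(B,\cO_B)=\bbC$ follows from Kodaira vanishing applied to $\cO_F(-B)$, since $\rH^1(F,\cO_F(-B))=0$. Because $(\bbZ/n\bbZ)^\times$ is closed under inversion, hypothesis (3) gives surjectivity of $\eta_{\cL^{\otimes i}}$ for all units $i$, including $i=-1$, that is, for $\cL^\vee$.

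\textbf{Consequences for $\bbV$.} \Cref{th:consequences-monodromy} then produces a simple summand $\bbW\subset\bbV$ containing $\bbV^{2,0}\oplus\bbV^{0,2}$. By \cref{lem:line-bundles-coh}, $h^0(\cL\otimes\omega_F)=h^2(\cL)=6$, so $\rk\bbW\ge 13$. Since $\rk\bbV=27$, every other simple summand has rank at most $14<\rk\bbW$, and \cref{supplement-finite-monodromy} gives $\bbV=\bbW\oplus\bbW'$ with $\bbW'$ of finite monodromy. As $M$ is connected, $M$ acts trivially on $\bbW'$.

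Next, $\bbV$ is not self-dual. By Poincaré duality, $\bbV^\vee\simeq\rR^2\pi_*\bbL^\vee$. If $\bbV\simeq\bbV^\vee$, then \cref{Thm:conservativity}, or \cref{cor:reconstruction-conservative} applied to $\cL$ and $\cL^\vee$, gives $\cL|_F\simeq\cL^\vee|_F$. This says $\cL|_F$ has order at most $2$, contradicting (4). The same argument applied to the restriction to $\bbW$, through the isomorphism $\cE(\bbV)\simeq\cE(\bbW)$, shows that $\bbW$ is not self-dual up to twist by a finite-order character. Finally, \cref{decomposition-of-triples} applied to the Hodge structure shows that $\bbW$ has Hodge length $3$ (types $(2,0)$, $(1,1)$, $(0,2)$) with nonzero outer pieces. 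The derivative $\theta^{\circ 2}$ is nonzero on $\bbW$, so the image of $M$ in $\GL(\bbW)$ is not contained in any group whose Hodge-theoretic Lie algebra has length at most $2$.

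\textbf{Upper bound and case analysis.} By the Lawrence--Sawin normalizer lemma \cite{lawrence2025shafarevich} combined with \cite{kramer2016cubic} and Schur's lemma, as recalled in the introduction, we have $M\subset E_6$ acting via a $27$-dimensional irreducible $V$. It remains to show $M=E_6$. Suppose $M\subsetneq E_6$. Then $M$ is semisimple and lies in a maximal connected subgroup of $E_6$. I would run through the list: $F_4$, $C_4$, $A_2\times G_2$, $G_2$, $A_2$, $A_1\times A_5$, $A_2^3$, $D_5T_1$ (parabolic type, which is not semisimple-maximal but must still be handled), and the Levi-type subgroups. In each case I would branch $V$ and rule the case out as follows.
\begin{itemize}
\item $F_4$, $C_4$ and $G_2$ preserve a quadratic form or a symplectic form on $V$, or on a subrepresentation, which forces self-duality of $\bbV$ or of a large summand. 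This contradicts non-self-duality.
\item $A_2\times G_2$ gives $V=(\mathbf 3,\mathbf 7)\oplus(\bar{\mathbf 6},\mathbf 1)$.
\item $A_1\times A_5$ gives $(\mathbf 2,\bar{\mathbf 6})\oplus(\mathbf 1,\mathbf{15})$.
\item $A_2^3$ gives three $9$-dimensional pieces.
\item $D_5$ gives $\mathbf 1+\mathbf{10}+\mathbf{16}$.
\end{itemize}
For the groups with several summands, I compare with the structure $\bbW\oplus\bbW'$ where $\bbW$ is simple of rank between $13$ and $27$ and the complement is trivial. This forces $V$ to have a single nontrivial summand of dimension at least $13$, with everything else trivial as an $M$-representation. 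The remaining options are then $M$ acting irreducibly on a $16$-, $15$-, $21$- or $27$-dimensional piece. For each, I check using the tensor-product structure, or self-duality (for instance, $\mathbf{16}$ of $D_5$ is not self-dual but $D_5$ would leave a rank-$10$ nontrivial summand), that either a second nontrivial summand survives or the representation is self-dual.

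\textbf{Main obstacle.} I expect this case analysis to be the hard step. The dangerous cases are those where the nontrivial summand is non-self-dual of rank at least $13$, such as $A_5$ on $\mathbf{15}$ or $D_5$ on $\mathbf{16}$. For these I would use the Hodge length $3$ condition together with Zarhin/Deligne-type constraints on Hodge cocharacters, and I would try the Hodge length argument first.
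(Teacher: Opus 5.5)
Your outline follows the paper's route up to the last step:
\begin{itemize}
\item $M\subset E_6$ via the Tannakian normalizer argument;
\item non-self-duality of $\bbV$ via \cref{Thm:conservativity}, which disposes of $C_4$, $F_4$, $G_2$ and $A_2$ (the last you list but do not treat explicitly);
\item the splitting $\bbV\simeq\bbW\oplus\bbW'$ with $\bbW'$ trivial;
\item a case analysis over the maximal semisimple subgroups of $E_6$.
\end{itemize}

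\textbf{The gap is in the final case analysis, which is where the argument has to close.} You leave two configurations open as ``dangerous'': $M$ irreducible on the $\mathbf{16}$ of $D_5$, and $M$ irreducible on the $\mathbf{15}$ of $A_5$ and trivial on the rest. You propose to handle them with Hodge-length or Zarhin/Deligne-type cocharacter constraints. Your remark that $D_5$ ``would leave a rank-$10$ nontrivial summand'' is about $H=D_5$, not about $M$, so by itself it settles nothing.

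The missing observation is elementary. $M\subset H$ is connected and acts trivially on $W'$. Moreover $W'$ contains the complementary summand of $V|_H$: the $\mathbf{10}$ of $D_5$, respectively $(\mathbf 2,\bar{\mathbf 6})$ of $A_1\times A_5$. Both are almost faithful representations of $H$. Hence $M$ would be finite, hence trivial, contradicting that $M$ acts irreducibly on $W$ with $\dim W\ge 13$.

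The remaining non-self-dual cases go the same way:
\begin{itemize}
\item For $A_2\times G_2$, triviality on $\bar{\mathbf 6}=\Sym^2$ of the standard representation of $\SL_3$ forces $M\subset G_2$. Then $(\mathbf 3,\mathbf 7)$ restricts to $\mathbf 7^{\oplus 3}$, leaving no room for an irreducible $W$ of dimension $\ge 13$.
\item For $A_2^3$, all pieces have dimension $9$.
\end{itemize}
No Hodge-theoretic input beyond $\rk\bbW\ge 13$ and non-self-duality is needed. Your extra claims (Hodge length $3$, non-self-duality up to twist) play no role.

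\textbf{There is also a counting slip upstream.} Your claim that every other simple summand has rank at most $14<\rk\bbW$ is false when $\rk\bbW=13$. Yet \cref{supplement-finite-monodromy} needs the strict inequality, and it needs it for every $\bbV_i$ with $i\in(\bbZ/n\bbZ)^\times$.

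The paper obtains this by ordering the argument differently:
\begin{enumerate}
\item Assume $M\subsetneq E_6$.
\item Non-self-duality holds for each $\bbV_i$, since $\cL^{\otimes i}$ again has order $n>2$.
\item This restricts $H$ to $D_5$, $A_1\times A_5$, $A_2\times G_2$ or $A_2^3$. The branching table shows each of these has at most one summand of dimension $>12$.
\item Only then does the corollary apply, giving $\bbW'$ with finite monodromy, hence trivial since $M$ is connected.
\end{enumerate}
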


In the rest of this section we will prove \cref{Thm:conservativity}, then \cref{Thm:precise-form-main-theorem}, and then finally deduce the main theorem of this paper.

\subsection{Proof of \cref{Thm:conservativity}} We apply the reconstruction results in \cref{sec:ivhs} to $\pi \colon \cF \to S$. Let us recall the setup: The cup product induces a linear map
\[ \mu \colon \Sym^2 \rH^1(F, T_F) \too \rH^2(F, \textstyle \bigwedge^2 T_F)\]
which by assumption has rank $50$. The target of this map is dual to $\rH^0(F, \omega_F^{\otimes 2})$. As $h^0(F, \omega_F^{\otimes 2})=51$, it follows that the dual map
\[ \mu^\vee \colon \rH^0(F, \omega_F^{\otimes 2}) \too \Sym^2 \rH^1(F, T_F)^\vee\]
has a one-dimensional kernel. The base locus
\[ B \subset F\]
of the linear system $\ker(\mu^\vee)$ is then a bicanonical divisor (in fact, by \cref{prop:kernel-cup-product}, it is the divisor of lines of the second type)
and hence ample since $F$ is canonically polarized; see \cref{ampleness-base-locus}. It follows from \cref{reconstruction-thm} and~\cref{lem:independence-of-choices} that any isomorphism of local systems $\bbV_1 \iso \bbV_2$ induces an isomorphism of coherent sheaves 
\[ \cL_1|_B\simeq \mathscr{E}(\mathbb{V}_1) \iso \mathscr{E}(\mathbb{V}_2)\simeq \cL_2|_B \]
where $\cL_i$ is the holomorphic line bundle associated with $\bbL_i|_F$.  Since $B\subset F$ is an ample divisor, we thus have by~\cref{injectivity-pic0-restriction-ample-divisor} an isomorphism $\cL_1 \simeq \cL_2$. Since the $\bbL_i$ are unitary, it follows that $\bbL_1\rvert_F \simeq \bbL_2\rvert_F$. \qed

\subsection{Proof of \cref{Thm:precise-form-main-theorem}} We begin by recalling the notation. Let $\bbL$ be a complex rank one local system of finite order on $\cF$ and
\[ \bbV := \rR^2 \pi_\ast \bbL.\]
The algebraic monodromy group of $\bbV$ is the Zariski closure
\[ M := \overline{\im \rho} \; \subset \; \GL(\bbV_s) \quad \textup{of the monodromy representation} \quad \rho \colon \pi_1(S, s) \too \GL(\bbV_s).\]
Since the local system $\bbL$ is of finite order, the neutral component $M^\circ$ is a semisimple algebraic group. We will compare $M$ with the Tannaka group of $F$ for the convolution of perverse sheaves. To introduce it, recall that the Albanese morphism 
\[ \epsilon \colon F \too A := \Alb(F),\]
obtained by fixing some base point in $F$, is a closed embedding. We define the intersection complex 
\[ \delta_F \;:=\; \epsilon_\ast\bbC_F[2] \]
as the pushforward of the constant sheaf on $F$ under the closed immersion~$\epsilon$, shifted in cohomological degree~$-2$ so that it becomes an object of the abelian category~$\Perv(A, \bbC)$  of perverse sheaves on~$A$ as in \cite{BBDG}. The group law on the abelian variety induces a convolution product on perverse sheaves so that the convolution powers of~$\delta_F$ generate a neutral Tannaka category~$\langle \delta_F \rangle$, see for instance~\cite[sect.~3]{JKLM}. The explicit form of generic vanishing given by \cref{lem:line-bundles-coh}, guarantees that 
\begin{equation} \label{} \rH^i(F, \bbL) = 0 \qquad \textup{for $i \neq 2$},
\end{equation}
as soon as the restriction of $\bbL$ to $F$ is nontrivial. Under this assumption, the functor
\[
 \omega \colon \quad \langle \delta_F \rangle \;\too\; \Vect(\bbC), \qquad Q \longmapsto \rH^0(A, Q \otimes \bbL)
\]
is by construction a fiber functor; see~\cite[proof of th.~13.2]{KWVanishing}. The automorphisms of this fiber functor are represented by a reductive algebraic group 
\[G_{F}\; \subset \; \GL(\bbV_s)\] 
which we call the \emph{Tannaka group of~$F$}. Here we used that 
\(\omega(\delta_F) = \rH^0(A, \delta_F \otimes \bbL) = \rH^2(F, \bbL) = \bbV_s\).
We are  interested in its derived group
\[
 G_{F}^\ast \;:=\; [G_{F}^\circ, G_{F}^\circ],
\]
which is a connected semisimple algebraic group. By \cite[th. 2]{kramer2016cubic} we have
\begin{equation} \label{Eq:tannaka-is-e6}(G_F^\ast, \bbV_s) \; \iso \; (E_6, V) \end{equation}
where~$V$ is one of the two irreducible representations of~$E_6$ with~$\dim V = 27$. With this notation we have:

\begin{lemma} \label{lemma:inclusion-tannaka} Let $\bbL$ be a rank $1$ local system on $\cF$ of finite order whose restriction to $F$ is nontrivial. Then,
\[ M^\circ \subset G_F^\ast \iso E_6.\]
\end{lemma}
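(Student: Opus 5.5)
The plan follows the strategy sketched in the introduction: show that monodromy normalizes the Tannaka group, then use Schur's lemma. Write $\eta$ for the generic point of $S$ and $\bar\eta$ for a geometric generic point. The family $\pi\colon \cF \to S$ embeds into the relative Albanese $\mathscr{A} := \Alb(\cF/S)$, an abelian scheme over $S$ (after a finite étale base change to get a section, which does not affect $M^\circ$). The relative perverse sheaf $\delta_{\cF} := \epsilon_\ast \bbC_{\cF}[2]$ restricts on each fiber to $\delta_{\cF_t}$.

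\textbf{Step 1: monodromy normalizes $G_F$.} For this I would invoke \cite[lemma~2.9]{lawrence2025shafarevich} (see also \cite[th.~4.5]{JKLM}). The convolution powers of $\delta_F$, and the subquotients cutting out the Tannakian category, spread out over a dense open subset of $S$. The fiber functor $Q \mapsto \rH^0(A, Q\otimes \bbL)$ likewise spreads out as a local system on $S$, whose value on $\delta_{\cF}$ is $\rR^2\pi_\ast \bbL = \bbV$. Generic vanishing (\cref{lem:line-bundles-coh}) holds on every fiber where $\bbL$ is nontrivial, hence on all of $S$, since the order of $\bbL|_{\cF_t}$ is locally constant in $t$. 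Parallel transport along a loop $\gamma$ therefore gives a tensor-compatible automorphism of the fiber functor, up to the automorphism of the category $\langle \delta_F\rangle$ induced by transport. Concretely, $\rho(\gamma)$ conjugates $G_F \subset \GL(\bbV_s)$ into itself, so $\im \rho$ lies in the normalizer $N(G_F)$. The step needing care is that the abelian scheme is not constant. Even so, the Tannakian category attached to the fibers varies in a locally constant way over $S$, after passing to an open subset of $S$. Since $\pi_1$ of a dense open subset surjects onto $\pi_1(S)$, this loses nothing. This spreading-out argument is the main obstacle; I would rely on the cited lemma, which is stated precisely for families of subvarieties of abelian schemes.

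\textbf{Step 2: Zariski closure.} Taking Zariski closures gives $M \subset N(G_F)$, hence $M^\circ \subset N(G_F)^\circ$. The normalizer of the connected reductive group $G_F^\circ$ has identity component $G_F^\circ \cdot Z$, where $Z$ is the centralizer of $G_F^\circ$ in $\GL(\bbV_s)$, because the outer automorphism group is discrete. The group $G_F^\ast$ also acts irreducibly on $\bbV_s$ by \eqref{Eq:tannaka-is-e6}. By Schur's lemma its centralizer is the scalars $\mathbb{G}_m$, and $G_F^\circ \subset G_F^\ast \cdot \mathbb{G}_m$, so $N(G_F)^\circ \subset \mathbb{G}_m \cdot E_6$.

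\textbf{Step 3: discard the scalars.} As noted before the lemma, $M^\circ$ is semisimple, so $M^\circ = [M^\circ, M^\circ] \subset [\mathbb{G}_m E_6, \mathbb{G}_m E_6] = E_6 = G_F^\ast$, which proves the lemma.
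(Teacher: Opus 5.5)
Your proposal is correct and follows the paper's argument. The monodromy normalizes $G_F$ by \cite[th.~4.5]{JKLM} (see also \cite[lemma~2.9]{lawrence2025shafarevich}), and then Schur's lemma together with the semisimplicity of $M^\circ$ gives $M^\circ \subset G_F^\ast$; you spell out this second step in more detail than the paper does. The one point the paper adds, and your proposal omits, is that the cited normalization theorem is stated for $\ell$-adic cohomology, so it has to be justified here. The paper justifies it by noting that $\bbL$ is torsion and hence of geometric origin.
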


\begin{proof} Because of \cref{Eq:tannaka-is-e6}, the representation $\bbV_s$ of $G_F^\ast$ is irreducible. By Schur's lemma, since the group $M^\circ$ is semisimple, it suffices to show that $M^\circ$ normalizes $G_F$. This is a special case of \cite[th. 4.5]{JKLM}. Strictly speaking, the cited result is for $\ell$-adic cohomology. However, we can apply this here because the local system $\bbL$ is torsion, hence of geometric nature. 
\end{proof}

The interesting part is proving that the inclusion $M^\circ \subset G_F^\ast$ is indeed an equality. To prove this, from now on, we will assume the additional hypotheses (1) -- (5) in \cref{Thm:precise-form-main-theorem}. We will argue by contradiction and assume that $M$ is strictly contained in $G_F^\ast$. Then $M$ will be contained in a maximal semisimple subgroup
\[ H \subset G_F^\ast.\]
The maximal semisimple subgroups of $E_6$ up to conjugacy and the corresponding branching of the representation $V$ can be found for instance in~\cite[p.~298]{mckay-patera81}. We recall them in~\cref{table:branching} together with the dimensions of the occurring irreducible summands and with the data of whether or not $V$ is self-dual as a representation of $H$. Note that if in our geometric situation $\bbV_s$ were self-dual as a representation of $H$, then $\bbV_s$ would also be self-dual as a representation of the monodromy group~$M\subset H$. But then the local system $\bbV$ would be self-dual, which is ruled out by the following result:

\begin{lemma}
With assumptions as in \cref{Thm:precise-form-main-theorem}, the local system $\bbV$ is not self-dual.
\end{lemma}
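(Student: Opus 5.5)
We argue by contradiction. Suppose $\bbV \simeq \bbV^\vee$ as local systems on $S$. Note that $\bbV^\vee \simeq \rR^2\pi_\ast \bbL^\vee$ by relative Poincar\'e duality for the smooth projective family $\pi$ of surfaces; no twist appears because the fibres have even real dimension $4$. Since $\bbL$ is unitary, we also have $\bbL^\vee \simeq \overline{\bbL}$. The plan is to apply \cref{Thm:conservativity} with $\bbL_1 := \bbL$ and $\bbL_2 := \bbL^\vee$. This would give $\bbL\rvert_F \simeq \bbL^\vee\rvert_F$, that is, $(\bbL\rvert_F)^{\otimes 2}$ is trivial. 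Hence the restriction of $\bbL$ to $F$ has order dividing $2$, contradicting hypothesis (4) of \cref{Thm:precise-form-main-theorem}.

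It remains to check the hypotheses of \cref{Thm:conservativity}.
\begin{itemize}
\item Conditions (1) and (2) there are exactly hypotheses (1) and (2) of \cref{Thm:precise-form-main-theorem}.
\item Condition (3) requires that $\bbL\otimes\omega_F$ and $\bbL^\vee\otimes\omega_F$ be globally generated, and the same for $\bbL_2=\bbL^\vee$. Since $\bbL^\vee = \bbL^{\otimes(n-1)}$ and $n-1 \in (\bbZ/n\bbZ)^\times$, both cases follow from hypothesis (3) applied to $i=1$ and $i=-1$.
\end{itemize}
Here $\bbL^{\otimes i}\otimes_\bbC\omega_F$ means the holomorphic line bundle $\cL^{\otimes i}\otimes\omega_F$ attached to the flat bundle. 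One should check that the order of $\bbL\rvert_F$ equals the order of $\bbL$ used in indexing $(\bbZ/n\bbZ)^\times$; hypothesis (4) fixes this.

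The only delicate point is the duality identification. The pairing $\rR^2\pi_\ast\bbL \otimes \rR^2\pi_\ast\bbL^\vee \to \rR^4\pi_\ast\bbC \simeq \bbC$, given by cup product followed by the trace, is perfect fibrewise by Poincar\'e duality with local coefficients. It is also flat, so it identifies the local systems $\bbV^\vee$ and $\rR^2\pi_\ast\bbL^\vee$.

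One might worry that \cref{Thm:conservativity} needs an isomorphism compatible with Hodge structures. It does not: \cref{lem:independence-of-choices} shows that $\cE(\bbV)$ depends only on the underlying local system, and that lemma is already built into \cref{Thm:conservativity}. So an abstract isomorphism of local systems $\bbV\simeq\bbV^\vee$ suffices, and the argument is complete.
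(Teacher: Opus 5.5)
Your proposal is correct and is essentially the paper's argument. The paper's proof is the one-line observation that self-duality of $\bbV$, combined with \cref{Thm:conservativity} applied to $\bbL$ and $\bbL^\vee$, forces $\bbL\rvert_F \simeq \bbL^\vee\rvert_F$, which contradicts $n>2$. You also make explicit two points the paper leaves implicit: the relative Poincar\'e duality identification $\bbV^\vee \simeq \rR^2\pi_\ast\bbL^\vee$, and the check of hypothesis (3) of \cref{Thm:conservativity} via $i=\pm 1 \in (\bbZ/n\bbZ)^\times$. Both are handled correctly.
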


\begin{proof}  If the local system $\bbV$ were self-dual, then \cref{Thm:conservativity} would yield an isomorphism of local systems $\bbL \rvert_F \iso \bbL^\vee \rvert_F$, which contradicts our assumption that $\bbL \rvert_F$ has order $n> 2$.
\end{proof}

\begin{table}[ht]
\centering
\renewcommand{\arraystretch}{1.15}
\begin{tabularx}{\linewidth}{@{}lXcc@{}}
\toprule
$H$ & Branching of $V$ & Dimensions & Self-dual? \\
\midrule
$D_5$
&
$[\,0\,] \;\oplus\; [\varpi_1] \;\oplus\; [\varpi_4]$
& $1 \oplus 10\oplus 16$
& No 
\\\midrule

$C_4$
&
$[\varpi_2]$
& $27$
& Yes
\\\midrule

$F_4$
&
$[\, 0\,] \;\oplus\; [\varpi_4]$
& $1\oplus 26$
& Yes
\\\midrule

$A_2$
&
$[2\varpi_1 + 2\varpi_2]$
& $27$
& Yes
\\\midrule

$G_2$
&
$[2\varpi_2]$
& $27$
& Yes
\\\midrule

$A_2\times G_2$
&
$[\varpi_2 \times \varpi_2] \;\oplus\; [(2\varpi_1) \times 0\,]$
& $21\oplus 6$
& No
\\\midrule

$A_1\times A_5$
&
$\displaystyle
[{\varpi_1} \times {\varpi_5}] \;\oplus\; [\,0 \times \varpi_2]$
& $12\oplus 15$
& No
\\\midrule

$A_2\times A_2\times A_2$
&
$[\varpi_1\times \varpi_1 \times 0\,]
\;\oplus\; 
[\varpi_2 \times 0 \times \varpi_1]
\;\oplus\;
[\,0\times \varpi_2\times \varpi_2]$
& $9\oplus 9\oplus 9$
& No
\\
\bottomrule \smallskip
\end{tabularx}
\caption{The  maximal connected semisimple subgroups $H\subset E_6$ and the branching rules for the restriction of $V$ to $H$. In the second column we denote by $[\lambda]$ the irreducible representation of highest weight $\lambda$. For each simple Dynkin type we denote by $\varpi_1, \varpi_2, \dots$ the fundamental dominant weights which form the basis dual to the coroots of the simple positive roots, numbered as in~\cite[p.~2]{mckay-patera81}.}
\label{table:branching} 
\end{table}

Ruling out the maximal semisimple subgroups $H$ for which the representation $V$ is not self-dual requires a finer argument. The key point is that the local system $\bbV$ contains a unique nontrivial simple local subsystem and that we can bound its rank:% following result:

\begin{lemma} 
With assumptions as in \cref{Thm:precise-form-main-theorem}, we have
\[
 \bbV \;\iso\; \bbW \oplus \bbW'
\]
for some simple local system $\bbW$ of rank $\rk(\bbW) \ge 13$ and trivial $\bbW' \iso \bbC_S^m$ for some $m\ge 0$.
\end{lemma}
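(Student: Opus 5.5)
The plan is to apply the results of \cref{sec:ivhs} to $\pi\colon \cF\to S$ with $d=2$ and $X=F$. First check the hypotheses. By assumption (2) of \cref{Thm:precise-form-main-theorem} and $h^0(F,\omega_F^{\otimes 2})=51$, the kernel of $\mu^\vee$ is one-dimensional. Hence $B$ is the bicanonical divisor of lines of second type (\cref{prop:kernel-cup-product}), and we may take $B'=B$ by \cref{ampleness-base-locus}. Since $B$ is ample on the surface $F$, it is connected. The vanishing $\rH^1(F,\omega_F^{\otimes -2})=0$ (Kodaira) together with $\rH^0(F,\cO_F)=\bbC$ gives $\rH^0(B,\cO_B)=\bbC$ from the sequence $0\to\omega_F^{-2}\to\cO_F\to\cO_B\to 0$. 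Assumption (3) applied to all $i\in(\bbZ/n\bbZ)^\times$, which is stable under $i\mapsto -i$, gives surjectivity of $\eta_{\cL^{\otimes i}}$ and $\eta_{\cL^{\otimes -i}}$ everywhere, hence on $B$. Therefore \cref{th:consequences-monodromy} applies to every $\cL^{\otimes i}$ and produces the simple summands $\bbW_i\subset\bbV_i$, with $\bbW:=\bbW_1$.

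Next comes the rank bound. Since $\bbL|_F$ is nontrivial, \cref{lem:line-bundles-coh} gives $h^2(F,\cL)=6$ and $h^0(F,\cL\otimes\omega_F)=h^2(F,\cL^\vee)=6$, both nonzero. \cref{th:consequences-monodromy} then gives $\rk\bbW\ge 6+6+2-1=13$. We also have $\rk\bbV=\chi$-computation: $h^1(F,\cL\otimes\Omega^1_F)=27-12=15$, using $\rH^2(F,\bbL)$ of dimension $e(F)=27$ and the vanishing in other degrees.

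For the complement, note that every simple sublocal system of $\bbV_i$ other than $\bbW_i$ lies in a complement of $\bbW_i$ of rank $\le 27-13=14<13$? This fails, since $14\ge 13$, so \cref{supplement-finite-monodromy} does not apply verbatim. The step I expect to be the main obstacle is showing that the complement is trivial, and I would handle it as follows. By semisimplicity and \cref{th:consequences-monodromy}, any simple summand $\bbU\ne\bbW_i$ of $\bbV_i$ has no $(2,0)$ or $(0,2)$ part, so it is pure of type $(1,1)$ and hence unitary. Therefore $\bbU$ is not isomorphic to $\bbW_i$, because $\bbW_i$ has Hodge length $3$ and its VHS is unique up to shift. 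So $\bbW_i$ is an isotypic component, and the Galois argument in the proof of \cref{supplement-finite-monodromy} goes through unchanged: the uniqueness there only needs $\bbW_i$ to be the unique summand with nonzero $(2,0)$ part, which is Galois-stable because $\bbW_i^\sigma\subset\bbV_i^\sigma$ again meets the $(2,0)$ part. \Cref{Cor:summand-finite-monodromy} then shows that the complement $\bbW'$ has finite monodromy.

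Finally, since $\bbW'$ has finite monodromy, its algebraic monodromy group is finite. It is also a quotient of $M$, because $\bbW'$ is a summand of $\bbV$. As $M$ is connected by assumption (5), the monodromy of $\bbW'$ is trivial, so $\bbW'\iso\bbC_S^m$.
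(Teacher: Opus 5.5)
Everything up to the point where $\bbW_i$ is an isotypic component of multiplicity one in $\bbV_i$ is correct. You are also right that $\rk\bbW_i\ge 13$ does not by itself give the rank hypothesis of \cref{supplement-finite-monodromy}.

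The gap is the Galois step. Conjugation by $\tau\in\Gal(\overline{\bbQ}/K)$, or by $\sigma\in\Gal(K/\bbQ)$ carrying $\bbV_1$ to $\bbV_i$, acts on the coefficients of the local system. It does not transport the Hodge filtration, which is transcendental and not defined over $\overline{\bbQ}$. The only exception is complex conjugation, and it does not fix $K$ for $n>2$. So $\tau(\bbW_i)$ is just some simple sub-local system of $\bbV_i$ of the same rank. Nothing you say prevents it from being one of the unitary summands of type $(1,1)$. Galois conjugates of non-unitary variations of Hodge structure are often unitary, as for tautological local systems on Shimura curves. Ranks do not exclude this either: $\bbV_i\simeq\bbW_i\oplus\tau(\bbW_i)\oplus\bbC$ with Hodge numbers $(6,1,6)$, $(0,13,0)$, $(0,1,0)$ is consistent with $(6,15,6)$. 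Without Galois stability you get neither an $R$-structure on $\bbW'$ nor $\bbW_{i,R}=\bbW_R^\sigma$, so \cref{Cor:summand-finite-monodromy} cannot be invoked. Unitarity of $\bbW'$ alone does not give finite monodromy; you would need all its Galois conjugates to be unitary, which is the same problem.

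The paper closes this with the $E_6$ input you avoided. By \cref{lemma:inclusion-tannaka}, the connected monodromy group of $\bbV_i$ lies in $E_6$. By \cref{Thm:conservativity} applied to $\bbL^{\otimes i}$ (which has order $n>2$), $\bbV_i$ is not self-dual. \cref{table:branching} then shows that at most one irreducible summand has dimension $>12$. So every other simple summand has rank $\le 12<13\le\rk\bbW_i$. Thus $\bbW_i$ is singled out by its rank, which is a Galois-invariant property, and \cref{supplement-finite-monodromy} applies verbatim.

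A table-free repair in your spirit would be to sharpen the bound to $\rk\bbW\ge 14$. Suppose $\dim\bbW_s^{1,1}=1$. Then commutativity of the Higgs field forces $\theta^{\circ 2}(\Sym^2 T)$ to be spanned by a single rank-one map. Hence $\cE(H_\cL)$ would be a quotient of $\omega_F^\vee\rvert_B$, which contradicts $\cE(H_\cL)\simeq\cL\rvert_B$ by comparing degrees on $B$. Once $\rk\bbW_i\ge 14>27-14$, the rank hypothesis holds automatically.
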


\begin{proof} This follows from \cref{supplement-finite-monodromy}, from which we borrow notation. Under the assumptions of the theorem, as in the proof of \cref{Thm:conservativity}, the base locus $B \subset F$ is a bicanonical divisor, hence both $B$ and $\omega_F^\vee(B)\simeq \omega_F$ are ample; see \cref{ampleness-base-locus}. As $B$ is ample, we have $H^0(B, \mathscr{O}_B)=\mathbb{C}$. Moreover, for any $i \in (\bbZ / n \bbZ)^\times$ the line bundle $\bbL^{\otimes i} \otimes_\bbC \omega_F$ is globally generated. It follows that we can consider the sub local system
\[ \bbW_i \subset \bbV_i := \rR^2 \pi_\ast \bbL^{\otimes i}\]
given by \cref{th:consequences-monodromy}. By that theorem, this local system has rank $$\rk \bbW_i \ge  h^0(F, \mathbb{L}^{\otimes i}\otimes \omega_F)+h^0(F, (\mathbb{L}^\vee)^{ \otimes i} \otimes \omega_F)+1 = 13.$$  

Now, by \cref{lemma:inclusion-tannaka}, the connected component of algebraic monodromy group of $\bbV_i$ is contained in $E_6$. By ~\cref{table:branching}, the restriction of $V$ to any semisimple subgroup $H \subsetneq E_6$ with $V|_H$ non-self-dual has at most one irreducible direct summand of dimension $> 12$. \Cref{supplement-finite-monodromy} then implies that $\mathbb{V}\simeq \mathbb{W}\oplus \mathbb{W}'$ where $\mathbb{W}$ is simple of rank at least $13$ and $\mathbb{W}'$ has finite monodromy; as we are assuming $M$ is connected, this implies $\mathbb{W}'$ is trivial as desired.
\end{proof}

To conclude the proof of \cref{Thm:precise-form-main-theorem}, we rule out the non-self-dual cases one by one. From now on we identify $(G_F^\ast, \bbV_s)$ with $(E_6, V)$ and consider the fibers 
 $W = \bbW_s$, $W'= \bbW'_s$ of $\bbW$, $\bbW'$ as subrepresentation of $V$ via $\bbV_s \iso V$. 

To begin with, note that the Dynkin type of $H$ cannot be $A_2 \times A_2 \times A_2$ because in that case $V$ splits as three irreducible representations of $H$ of dimension $9$, but $W$ has dimension at least $13$.

If $H$ had Dynkin type $D_5$, the representation $W$ would be contained in the $16$-dimensional representation of $H$ appearing in the table, hence $W'$ would contain the $10$-dimensional representation. Now this representation of $H$ is almost faithful (it is the standard representation of $\SO_{10}$), hence $W'$ is an almost faithful representation of $M$. But $M$ acts trivially on $W'$, thus $M$ would be trivial. This is a contradiction because $W$ is an irreducible representation of $M$ of dimension $>1$. 

Reasoning similarly, if $H$ had Dynkin type $A_1 \times A_5$, the representation $W'$ would contain the~$12$-dimensional representation of $H$ appearing in the table. Again, this is almost faithful (it is the tensor product of the standard representations of $\SL_2$ and $\SL_6$). Thus we would conclude that $W'$ is an almost faithful representation of $M$, hence $M$ is trivial. Contradiction.

Finally, if $H$ had Dynkin type $A_2 \times G_2$, then $W'$ would contain the $6$-dimensional representation of~$H$ appearing in the table. This representation is the symmetric square of the standard representation of $\SL_3$ on which $G_2$ acts trivially. Since $W'$ is the trivial representation of $M$, we would have that $M$ is contained in the factor $G_2$. On the other hand, the $21$-dimensional representation of~$H$ appearing in the table is the tensor product of the standard representation of $\SL_3$ and the~$7$-dimensional representation of $G_2$. Hence the irreducible summands of its restriction to $G_2$ have dimension $7$. But $W$ is contained in this representation, it is irreducible and has dimension $\ge 13$, which is impossible if $M \subset G_2$.

Summing up, the group $M$ cannot be contained in any of the maximal semisimple groups of $E_6$, hence $M = G_F^\ast \iso E_6$. This concludes the proof of \cref{Thm:precise-form-main-theorem}. \qed

\subsection{Proof of the main theorem} We are now in position to prove the main theorem in \cref{sec:main-results}. First note that by \cref{thm:global-generation-line-bundle}, there is some $n_0 > 2$ with the following property: given a very general smooth cubic threefold $Y \subset \bbP^4$, we have that any line bundle $\cL$ on $F$ of finite order $n \ge n_0$ is such that $\cL \otimes \omega_F$ is globally generated. Indeed, let $\eta$ be the generic point of the moduli space of smooth cubic threefolds, $Y_\eta$ the generic cubic $3$-fold, and $F_\eta$ its Fano surface of lines. By \cite[chapter 1, 2.13(i)]{huybrechts2023geometry}, the intermediate Jacobian of a very general cubic $3$-fold, and hence the (isomorphic, by \cite[chapter 5, corollary 3.3]{huybrechts2023geometry}) Albanese of the corresponding Fano surface of lines, is simple; thus the Albanese of $F_\eta$ is geometrically simple. Hence \cref{thm:global-generation-line-bundle} implies that there exists $n_0$ such that any line bundle $\cL$ on $F$ of finite order $n \ge n_0$  satisfies that $\cL \otimes \omega_{F_\eta}$ is globally generated. For each $n\geq n_0$ the set of cubic threefolds whose Fano surface of lines $F$ admits an $n$-torsion line bundle $\cL$ such that $\cL\otimes \omega_F$ fails to be globally generated is hence contained in a proper closed subset of moduli. Thus the same $n_0$ suffices for a very general $Y$ as claimed.

To prove statement (1) of the main theorem the key point is that, the base $S$ being smooth, for any \'etale morphism $S' \to S$ (not necessarily finite) the induced map between topological fundamental groups has image a subgroup of finite index in $\pi_1(S)$. In particular, the connected component of the algebraic monodromy group is insensitive to passing such an $S'$. 

Choosing very general $s\in S$ and setting $Y=\cY_s, F=\cF_s$, we may assume that
\begin{enumerate}
\item the Kodaira--Spencer map $T_s S \to \rH^1(Y, T_{Y})$ is surjective, \smallskip
\item the cup product map $\Sym^2 \rH^1(F, T_F) \to \rH^2(F, \bigwedge^2 T_F)$ has rank $50$, \smallskip
\stepcounter{enumi}
\item the restriction of $\cL$ to $F$ has order $n \ge n_0$,
\end{enumerate}
where $\cL$ is the flat line bundle associated with the rank one local system $\bbL$. Indeed, under the assumption that the family $\cY \to S$ is versal, the first condition holds for generic $s$ by definition of a versal family; the second holds for generic $s$ by \cref{thm:rank-of-cup-product}; and the last holds for generic~$s$ by assumption in the statement of the main theorem.
Moreover, as $s$ was very general we may assume by the first paragraph of this proof that all line bundles $\cL$ on $F$ of order $n$ satisfy that $\cL\otimes \omega_F$ is globally generated, and in particular that
\begin{enumerate} \setcounter{enumi}{2}
\item for all $i \in (\bbZ / n \bbZ)^\times$ the line bundles $\cL^{\otimes i} \otimes \omega_F$ on $F$ are globally generated.
\end{enumerate}

 Passing to a finite \'etale cover of $S$, we may further assume that the rank one local system $\bbL$ on~$\cF$ has order $n$ and that
 \begin{enumerate} \setcounter{enumi}{4}
 \item the algebraic monodromy group of $\bbV := \rR^2 \pi_\ast \bbL$ is connected.
 \end{enumerate} 
The assumptions of \cref{Thm:precise-form-main-theorem} are thus satisfied, and the monodromy statement in (1) follows.

We now prove statement (2) of the main theorem. For $i = 1, 2$ we may assume as above that $\bbL_i$ is a local system of finite order $n_i \ge n_0$ on $\cF$ whose restriction to the fibers of $\pi \colon \cF \to S$ has order $n_i$. By the choice of $n_0$, the associated line bundle $\cL_i$ is then such that both $\cL_i \otimes \omega_F$ and $\cL_i^\vee \otimes \omega_F$ are globally generated. If the local systems $\bbV_1 = \rR^2 \pi_\ast \bbL_1$ and $\bbV_2 = \rR^2 \pi_\ast \bbL_2$ are isomorphic, then \cref{Thm:conservativity} implies that the restrictions $\bbL_1 \rvert_{\cF_s}$ and $\bbL_2 \rvert_{\cF_s}$ are isomorphic.

It remains to prove the statement about the invariant trace field $\inv(\rho_s)$ of $\rho_s$. First, note that
\[
\bbQ(\rho) \subset K := \bbQ(\zeta_n),
\]
where $\zeta_n = e^{2\pi i/n}$. Indeed, the local system $\bbL$ is obtained by extension of scalars from a local system $\bbL_K$ defined over $K$. It follows that $\bbV = \bbV_K \otimes_K \bbC $ with  $\bbV_K = \rR^2 \pi_\ast \bbL_K$ so the traces of $\rho_s$ lie in $K$, giving the inclusion. For the converse inclusion, after replacing $S$ by a finite \'etale cover, we may assume that
\[
\inv(\rho_s) = \bbQ(\rho_s).
\]
For $\sigma \in \Gal(K / \bbQ)$, consider the conjugated local systems $\bbL_K^\sigma := \bbL_K \otimes_{\sigma} K$ and the monodromy representation $\rho_s^\sigma$ associated with $\bbV_K^\sigma := \bbV_K \otimes_\sigma K = \rR^2 \pi_\ast \bbL_K^\sigma$.
Suppose by contradiction that there exists a nontrivial $\sigma \in \Gal(K/\bbQ)$ acting trivially on $\bbQ(\rho_s)$. Then $\rho_s$ and its Galois conjugate $\rho_s^\sigma$ have the same traces, hence are isomorphic (since $\rho_s$ is semisimple). It follows that the corresponding local systems $\bbV_K$ and $\bbV_K^\sigma$ are isomorphic. By (the already-proven) statement (2) of the main theorem, this implies that, for a fiber $\cF_s$, the restrictions of the underlying rank-one local systems satisfy
\[
\bbL|_{\cF_s} \cong \left(\bbL|_{\cF_s}\right)^{\otimes r},
\]
where $r \in (\bbZ/n\bbZ)^\times$ is determined by $\sigma(\zeta_n) = \zeta_n^r$, since $\bbL_K^\sigma \cong \bbL_K^{\otimes r}$. This contradicts the assumption that $\bbL$ has order $n$, and thus concludes the proof of the main theorem. \qed

\bibliographystyle{alpha}
\bibliography{bibliography-e6.bib}

\end{document}